\documentclass[11pt]{article}

\usepackage[a4paper,margin=1in]{geometry}
\usepackage[T1]{fontenc}
\usepackage{lmodern}
\usepackage{amsmath,amssymb,amsthm,mathtools}
\usepackage{booktabs}
\usepackage{url,authblk,cite}
\usepackage{microtype}
\usepackage{enumitem}
\usepackage{xcolor}
\usepackage{tikz}

\setlist{nosep}

\newtheorem{theorem}{Theorem}[section]
\newtheorem{lemma}[theorem]{Lemma}
\newtheorem{proposition}[theorem]{Proposition}
\newtheorem{corollary}[theorem]{Corollary}

\theoremstyle{remark}
\newtheorem{remark}[theorem]{Remark}

\newcommand{\R}{\mathbb R}
\newcommand{\ee}{\mathbf e}
\newcommand{\Dr}{D_{\!r}}

\newcommand{\cS}{\mathcal S}
\newcommand{\cL}{\mathcal L}
\newcommand{\floor}[1]{\left\lfloor #1\right\rfloor}

\usepackage{hyperref}
\hypersetup{
  hidelinks,
  pdftitle={A proof of the resistance diameter conjecture for line graphs},
  pdfauthor={Xiang-Feng Pan, Xiang-Yang Liu, Zhen-Mu Hong},
  pdfkeywords={Effective resistance, resistance diameter, line graph}
}

\title{A proof of the resistance diameter conjecture for line graphs}

\author[a]{Xiang-Feng Pan}
\author[a]{Xiang-Yang Liu}
\author[b]{Zhen-Mu Hong
{\thanks{Corresponding author. \protect \\
\indent \ \
E-mail:
    xfpan@ahu.edu.cn (X.-F. Pan),
    2484262246@qq.com  (X.-Y. Liu), zmhong@mail.ustc.edu.cn (Z.-M. Hong) }}}

\affil[a]{\small School of Mathematical Sciences, Anhui University, Hefei 230601, China}
\affil[b]{\small School of Statistics and Applied Mathematics, Anhui University of Finance \& Economics, Bengbu 233030, China}

\date{}

\begin{document}
\maketitle

\begin{abstract}
We prove the conjecture of Xu, Li, Hua, and Pan that the resistance diameter
does not increase under the line-graph operation. For every finite connected
simple graph $G$ with at least one edge, we establish
$\Dr(L(G))\le\Dr(G)$, with equality if and only if $G$ is a cycle or $K_4$.
The proof combines an exact electrical representation of $L(G)$ by stars
with a sharp budget inequality for the branch core. This inequality controls
the combined error terms arising from the comparison of degree-two paths
and is central to both the diameter bound and the equality analysis.
\end{abstract}

\medskip
\noindent\textbf{Keywords:}
Effective resistance; Resistance diameter; Line graph; Kron reduction; Electrical network; Degree-two thread.

\medskip
\noindent\textbf{2020 Mathematics Subject Classification:}
05C12, 05C50, 05C76.

\section{Introduction}
\subsection{Background and related work}

Unless stated otherwise, graphs are finite, simple, and undirected; auxiliary electrical networks may have parallel edges.  The line graph $L(G)$ of a graph $G$ has vertex set $E(G)$, with two vertices adjacent whenever the corresponding edges of $G$ have a common endpoint.  Thus the line-graph operation replaces each local star of $G$ by a clique.  Unlike
ordinary graph distance, however, effective resistance depends on all routes between two vertices, so its behavior under this operation is not determined by shortest paths alone.

Resistance distance was introduced systematically by Klein and Randi\'c \cite{KleinRandic1993}.  If every edge of a connected graph is replaced by a unit resistor, the resistance distance $R_G(x,y)$ is the effective resistance measured between $x$ and $y$.  It admits equivalent formulations through electrical flows, the Moore--Penrose inverse of the graph Laplacian, and random walks; standard references include \cite{Bapat2014,DoyleSnell1984,LyonsPeres2017}.  The Kirchhoff index is the sum of the resistance distances over all unordered vertex pairs \cite{GutmanMohar1996}.  Here we study instead the maximum pairwise resistance
\[
   \Dr(G)=\max_{x,y\in V(G)}R_G(x,y).
\]
We allow $x=y$ in the maximum; in particular, the one-vertex graph has resistance diameter $0$.  This quantity is called the \emph{resistance diameter}.

The systematic study of resistance diameter is comparatively recent.
Sardar, Hua, Pan, and Raza obtained a combinatorial formula for hypercubes
\cite{SHPR2020}, and Li, Xu, Hua, and Pan treated Cartesian and
lexicographic products of paths \cite{LXHP2022}.  Evans and Francis
survey exact and algorithmic methods for resistance distances on structured
graphs \cite{EvansFrancis2022}.  On the methodological side, Thomson's
principle and Rayleigh monotonicity connect effective resistance to minimum
energy flows \cite{DoyleSnell1984,LyonsPeres2017}; Laplacian and optimization
viewpoints are developed in \cite{Devriendt2022,GBS2008}; and
Kron reduction provides the Schur-complement formalism for eliminating
interior vertices while preserving boundary resistances
\cite{DorflerBullo2013}.

Xu, Li, Hua, and Pan \cite{XLHP22} introduced the problem considered
here.  Using series--parallel reductions,
elimination, and star--mesh transformations, they proved
\[
\Dr(L(G))\le \Dr(G)
\]
for trees and unicyclic graphs. They also reported computational evidence for all simple nonempty connected graphs with fewer than twelve vertices and posed the general statement as their Conjecture~5.1.  Beyond the unicyclic
case, several cyclic routes interact within the branch core.  A
transformation that is sharp on one thread need not remain sharp when two selected threads are coupled through this core.

\subsection{Main result and proof outline}

Our main theorem resolves the conjecture of Xu et al. \cite{XLHP22}.

\begin{theorem}\label{thm:main}
For every finite connected simple graph $G$ with at least one edge,
\[
   \Dr(L(G))\le \Dr(G),
\]
with equality if and only if
$G\cong C_n$ $(n\ge 3)$ or $G\cong K_4$.
\end{theorem}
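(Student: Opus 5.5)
The plan is to build an exact electrical model of $L(G)$ and then compare resistances in $L(G)$ with those in $G$ through a reduced ``branch core''. The abstract points to stars and Kron reduction, so we start with a star representation. For each vertex $v\in V(G)$ of degree $d_v$, the clique $K_{d_v}$ on the edges at $v$ is Kron-equivalent (star--mesh transform) to a star with centre $c_v$ and leaves the edges at $v$, each spoke of resistance $d_v$ (conductance $1/d_v$ on each spoke). Indeed, eliminating $c_v$ gives mesh conductances $(1/d_v)(1/d_v)/(d_v\cdot 1/d_v)=1/d_v$. That is not unit, so we rescale: we use spokes of conductance $d_v$, i.e.\ resistance $1/d_v$. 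Then mesh conductance is $d_v\cdot d_v/(d_v\cdot d_v)=1$, as required. Thus $L(G)$ with edge vertices $e=uv$ is the boundary reduction of a network $N(G)$ on $V(G)\cup E(G)$, in which each $e=uv$ joins $c_u$ and $c_v$ by a path of resistances $1/d_u+1/d_v$ through the node $e$. Hence for edges $e=uv$ and $f=xy$,
\[
R_{L(G)}(e,f)=R_{N(G)}(e,f),
\]
and $N(G)$ is $G$ with each edge $uv$ rescaled to resistance $1/d_u+1/d_v\le 1$ except when a degree-one endpoint is present, and with subdivision points as terminals.

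Next we bound $R_{N}(e,f)$. Write $e$ as the midpoint of a resistor of length $r_e=1/d_u+1/d_v$. The potential at $e$ is an interpolation of the potentials at $u$ and $v$, so we can express $R_N(e,f)$ as a quadratic combination of $R_N$ between the endpoint centres, minus the local terms $r_e/4$-type corrections. Rayleigh monotonicity handles every edge with both endpoints of degree at least $2$: there $r_e\le1$, so $R_{N'}\le R_G$ on vertex pairs, where $N'$ is $N$ with the midpoints suppressed. For pendant edges, where the spoke at the leaf has resistance $1$, we treat them separately: a pendant edge $e$ attached at $u$ gives $R_N(e,\cdot)\le 1+R_N(c_u,\cdot)-$ (gain from the other spokes at $u$), and we compare this with the leaf's resistance $1+R_G(u,\cdot)$ in $G$. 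The step I expect to be hardest is the comparison of two selected edges lying on long degree-two threads. Along a thread of length $\ell$ in $G$ the rescaled resistance is roughly $\ell-1$ plus endpoint halves, and the midpoint corrections at both ends must be charged against the savings elsewhere in the core. We first suppress every maximal degree-two thread into a single weighted edge, obtaining a branch core $H$ with minimum degree at least $3$ away from the threads. We then prove a ``budget inequality'': the total extra resistance from placing $e,f$ at interior points of threads is at most the total saving $\sum(1-1/d_u-1/d_v)$ along the effective current paths, via Thomson's principle using the $G$-optimal unit flow between suitable endpoints. When $e$ and $f$ lie on two threads coupled through $H$, I would first try the case split by whether the optimal $G$-flow between the far ends of the threads carries current through branch vertices of degree $\ge3$. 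Each such vertex contributes a strict saving of at least $1-2/3$ per unit of current.

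For equality, tracking the strict inequalities shows that equality forces no pendant edges, no savings in Rayleigh monotonicity, and a tight budget. This means either all degrees are $2$ (a cycle, where direct computation gives $\Dr(L(C_n))=\Dr(C_n)$ since $L(C_n)\cong C_n$), or the core is $3$-regular with no threads and exact symmetric flows. Finite checking among small cubic graphs then leaves only $K_4$, where $L(K_4)$ is the octahedron and both resistance diameters equal $1/2$. Conversely, cycles and $K_4$ attain equality by these computations. Trees and stars are already covered by Xu et al., and our pendant analysis gives strict inequality for them.
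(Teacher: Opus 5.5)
Your opening matches the paper: the star network with spokes $1/d_v$, hub weights $1/d_u+1/d_v$, and compression of degree-two threads. But the step that carries the theorem, your ``budget inequality'', is only asserted. Worse, the mechanism you propose for it fails on an equality case.

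Take $G=K_4$, $e=12$, $f=34$, and the electrical $1$--$3$ flow, with potentials $\tfrac12,\tfrac14,0,\tfrac14$ at $1,2,3,4$. Compressing every edge to $\tfrac23$ saves $\tfrac16$ of its energy $\tfrac12$. Moving the source to $e$ forces currents $\tfrac34$ and $\tfrac14$ on the two spokes of resistance $\tfrac13$ at $e$, costing an extra $\tfrac5{24}-\tfrac1{24}=\tfrac16$; the same happens at $f$. So the test flow certifies only $R_{L(K_4)}(e,f)\le\tfrac23$, while the truth is $\tfrac12=\Dr(K_4)$. All endpoint choices are symmetric here, so no choice of ``suitable endpoints'' rescues it. Also, energy savings are quadratic in the current, and at a thread end they are only $(\tfrac12-\tfrac1p)\theta^2$, so ``at least $1-2/3$ per unit of current'' is not right.

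The paper needs two ideas that are absent from your sketch. First, Proposition~\ref{prop:general-transfer} rounds both terminals independently at random to adjacent grid vertices. It compares the fractional demand $\nu_h(t_h)-\nu_k(t_k)$ with the \emph{average} grid-pair resistance, giving $R\le\beta D+C_h+C_k$ with explicit errors $C_i$. For $K_4$ the averaged demand $\tfrac12(\ee_1+\ee_2)-\tfrac12(\ee_3+\ee_4)$ has energy $\tfrac14$, and the bound is exactly $\tfrac13+\tfrac1{12}+\tfrac1{12}$. Second, because the saving is multiplicative, $(1-\beta)D$, closing Lemma~\ref{lem:CB} requires lower bounds on $D$ in terms of $s=1-\beta$. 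The paper uses the witness-thread bound $D\ge\frac1{12s}+\frac14$, the cross-height bound, the enhanced witness estimate, and a separate cubic case in which the two threads share an endpoint and $\rho_g,\rho_k$ must be estimated jointly.

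Other parts of the proposal are also incomplete.
\begin{itemize}
\item Suppressing a thread whose ends coincide produces a self-loop, which is electrically invisible, so edge-nodes on loop threads lose their positions. The paper keeps these as resistor circles and controls root-to-core resistances with the interpolation Lemma~\ref{lem:rank-one}.
\item With leaves, you handle only pairs containing the pendant edge. The remaining pairs need something like the paper's induction on $|E(G)|$.
\item Your equality argument fails outright. ``No savings in Rayleigh monotonicity'' is false for $K_4$ itself: every edge is compressed to $\tfrac23$, and this is offset exactly by the point-insertion terms. ``Finite checking among small cubic graphs'' proves nothing, since nothing bounds the order.
\end{itemize}
The paper instead shows that the budget is strict when $s<\tfrac13$ (Lemma~\ref{lem:budget-strict}). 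Equality therefore forces $s=\tfrac13$, two single-edge threads with cubic ends and $\rho=D=\tfrac12$, and all six resistances among the four endpoints equal to $\tfrac12$. Since nonadjacent cubic pairs have resistance at least $\tfrac23$, these four vertices induce a $K_4$ saturating every degree, so $G=K_4$. Leaves and loop configurations need their own strictness arguments as well.
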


Thus the constant $1$ cannot be improved, and the theorem determines exactly when equality occurs.  The proof is based on electrical network theory, but Rayleigh monotonicity alone does not compare the two diameters, because the locations of the terminals change when a star is replaced by a clique.

The starting point is the classical star--mesh transformation: a star
whose $d$ arms have resistance $1/d$ is equivalent, at its boundary, to a
unit-resistance clique. Applying this at every vertex realizes $L(G)$ as
an incidence network. A maximal degree-two path of length $L$, with endpoint
degrees $p,q$, then has compressed resistance
\[
 \tau=L-1+\frac1p+\frac1q.
\]
The difficulty is that the line-graph vertices lie inside these compressed
paths, whereas the diameter of $G$ is measured at the original vertices.

We separate this terminal-location problem from the graph structure.
Proposition~\ref{prop:general-transfer} gives a comparison for arbitrary
weighted networks and subdivision grids: independent rounding of two
interior points produces explicit errors $C_h,C_k$. The graph-specific
step is the full-core budget, Lemma~\ref{lem:CB}, which proves
$C_h+C_k\le(1-\beta)D$ for the compression determined by the full degrees
of $G$. The two errors must be estimated together because they share the
same electrical core. Lemma~\ref{lem:budget-strict} also identifies a whole
parameter range in which this budget is strict.

A second general tool, Lemma~\ref{lem:rank-one}, concerns the shortening of
one resistor in an arbitrary weighted network. For any probability demand,
one of the two adjacent original grid points controls the selected point
on the shortened resistor. This estimate handles a fixed root and hence
the loop-thread configurations. Thus the network comparison and
interpolation tools have a scope beyond line graphs; the budget supplies
the degree-dependent information needed here.

Section~2 develops the network tools. Section~3 reduces the line-graph
problem to its branch core, Section~4 proves the budget, and Section~5
handles loop threads. Section~6 completes the diameter bound and determines
equality, and Section~7 records further consequences. The appendices contain
the local algebra and the proof of the interpolation lemma.

\section{Electrical comparison tools}\label{sec:tools}

\subsection{Energy, reduction, and wiring}
An auxiliary network is a finite connected loopless multigraph with
positive edge resistances; parallel edges are allowed. Write $\cL_N$ for
its weighted Laplacian, $\cL_N^+$ for the pseudoinverse, and
$\R_0^{V(N)}=\{\zeta:\mathbf1^{\mathsf T}\zeta=0\}$. On this space put
\begin{equation}\label{eq:energy-form}
 \langle\zeta,\eta\rangle_N=\zeta^{\mathsf T}\cL_N^+\eta,
 \qquad Q_N(\zeta)=\|\zeta\|_N^2=\zeta^{\mathsf T}\cL_N^+\zeta.
\end{equation}
If $\partial_N$ records the net current leaving each vertex, Thomson's
principle and the resistance formula are
\begin{equation}\label{eq:thomson}
 Q_N(\zeta)=\min_{\partial_N\theta=\zeta}\sum_e r_e\theta_e^2,
 \qquad R_N(x,y)=Q_N(\ee_x-\ee_y).
\end{equation}
A unit-mass vector has coordinate sum one; a probability vector also has
nonnegative coordinates.

We use the standard electrical principles in
\cite{DoyleSnell1984,LyonsPeres2017,KleinRandic1993,Strutt1871}.
Resistance is a metric. Series resistances add, and parallel conductances
add. A subnetwork attached at one vertex carries no current between
terminals outside its interior and may be deleted. In particular,
resistances add across a cut vertex. Decreasing resistances, adding edges,
or identifying vertices cannot increase effective resistance (Rayleigh
monotonicity); consequently $R_G(x,y)\le R_H(x,y)$ for a connected subgraph
$H$ containing $x,y$. If a shortened edge carries nonzero current in the
old electrical flow, the decrease is strict: that flow has strictly smaller
energy in the new network. Finally, Schur elimination preserves the
boundary response matrix and all resistances between retained vertices
\cite{DorflerBullo2013}.

The two inequalities in the next lemma are due to Coppersmith, Feige, and
Shearer~\cite{CFS1996}.  We include a self-contained
proof and track equality in part~\textup{(ii)}, which is used in
Section~\ref{sec:equality}.
\begin{lemma}[Degree lower bounds]\label{lemma:degree-bound}
Let $G$ be a connected simple graph and let $x,y\in V(G)$ be distinct.
\begin{enumerate}[label=\textup{(\roman*)}]
\item If $x$ and $y$ are nonadjacent, then
\[
 R_G(x,y)\ge \frac{1}{d_G(x)}+\frac{1}{d_G(y)}.
\]
\item If $x$ and $y$ are adjacent, then
\[
 R_G(x,y)\ge \frac{1}{d_G(x)+1}+\frac{1}{d_G(y)+1}.
\]
Equality in the second inequality holds if and only if $N_G[x]=N_G[y]$.
\end{enumerate}
\end{lemma}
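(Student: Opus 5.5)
The plan is to prove both inequalities by one Rayleigh-monotonicity step: identify all vertices other than $x$ and $y$ into a single vertex $z$. Identifying vertices cannot increase effective resistance, so $R_G(x,y)$ is at least the resistance between $x$ and $y$ in the resulting small multigraph. The equality statement in part (ii) will then come from comparing the original electrical flow with the flow in the shorted network.

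For part (i), $x$ and $y$ are nonadjacent, so every edge at $x$ becomes a parallel edge $xz$ and every edge at $y$ becomes a parallel edge $yz$. The shorted network is two parallel bundles in series. Its resistance is exactly $1/d_G(x)+1/d_G(y)$, which gives (i).

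For part (ii), set $a=d_G(x)-1$ and $b=d_G(y)-1$.

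\emph{Degenerate case.} If $a=0$, then $xy$ is a bridge (a pendant edge), so $R_G(x,y)=1$. The bound $\frac12+\frac1{b+2}\le1$ holds, with equality iff $b=0$, i.e.\ $G=K_2$, which is exactly the case $N_G[x]=N_G[y]$. The case $b=0$ is symmetric.

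\emph{Main case $a,b\ge1$.} The shorted network is the edge $xy$ in parallel with the series path formed by $a$ parallel edges $xz$ and $b$ parallel edges $zy$. Its resistance is
\[
\rho=\Bigl(1+\tfrac{ab}{a+b}\Bigr)^{-1}=\frac{a+b}{ab+a+b}.
\]
Cross-multiplying and expanding,
\[
(a+b)(ab+2a+2b+4)-(a+b+4)(ab+a+b)=(a-b)^2\ge0.
\]
Hence $\rho\ge \frac{1}{a+2}+\frac{1}{b+2}$, which proves (ii). Moreover, equality in (ii) forces both $a=b$ and $R_G(x,y)=\rho$.

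\emph{Equality characterization.} This is the step needing care.

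For the "if" direction, assume $N_G[x]=N_G[y]$. Then every other vertex is adjacent to both $x$ and $y$, so $d_G(x)=d_G(y)$. Put potential $R$ at $x$, $0$ at $y$, and $R/2$ at every other vertex. Kirchhoff's law holds at each $w\ne x,y$ because $w$ sends the same current to $x$ as it receives from $y$, and other edges among such vertices carry no current. Hence this is the true electrical flow. It coincides with the flow of the shorted network, so $R_G(x,y)=\rho$. Together with $a=b$, the computation above gives equality.

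For the "only if" direction, suppose equality holds, so $R_G(x,y)=\rho$. The true minimum-energy flow of $G$ then has the same energy as the shorted network. By uniqueness in Thomson's principle, the shorted flow lifts to a valid potential on $G$. Thus all vertices $w\ne x,y$ share a common potential $c$, and $0<c<R$ because current genuinely passes through $z$ when $a,b\ge1$. Kirchhoff's law at such a $w$ reads
\[
[w\sim x](R-c)=[w\sim y]\,c,
\]
where $[w\sim x]$ is $1$ if $w$ is adjacent to $x$ and $0$ otherwise. Edges to other vertices at potential $c$ contribute nothing. Connectivity rules out $w$ being adjacent to neither $x$ nor $y$: all current would then avoid the component of such vertices, so those vertices could not sit at the common potential $c$ forced by the lifted flow. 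One must phrase this via connectivity of $G-\{x,y\}$ pieces, or simply note that a vertex adjacent to neither sits in a part carrying no current. With that excluded, $c\in(0,R)$ forces $w$ to be adjacent to both $x$ and $y$. Hence $N_G[x]=N_G[y]$.

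I expect the main obstacle to be this "only if" step: making rigorous that equality in the Rayleigh shorting step forces every vertex other than $x,y$ to the same potential. Uniqueness of the minimizer in Thomson's principle, recorded in the standard principles, is what I would rely on.
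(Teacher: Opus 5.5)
Your proofs of the inequalities in (i) and (ii) are correct, and they take the dual of the paper's route. The paper applies Cauchy--Schwarz to the unit electrical flow on the stars at $x$ and $y$, which gives $c^2+(1-c)^2/(p-1)+(1-c)^2/(q-1)$ with $c=\theta_{xy}$. That expression is exactly the energy of a unit flow in your shorted network. So both routes reach $(p+q-2)/(pq-1)=(a+b)/(ab+a+b)$, and your remainder $(a-b)^2$ is the paper's $(p-q)^2$ term with the denominators cleared. The paper itself uses your shorting argument in Lemma~\ref{lem:rho-bounds}(i).

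The routes differ in the equality analysis. The paper reads off equal spoke currents and zero current off the two stars. It then notes that a neighbor of $x$ can only pass its incoming current on to $y$. You instead need the true potential to be constant on $S=V(G)\setminus\{x,y\}$. That step is sound, and it is best phrased through Dirichlet's principle. The lifted potential (equal to $R$ at $x$, $0$ at $y$, and $c$ on $S$) has energy $R^2/\rho=R^2/R_G(x,y)$. This is the minimum for those boundary values, so by uniqueness it is the harmonic potential.

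There is, however, a false step in your equality argument. You read $N_G[x]=N_G[y]$ as ``every vertex other than $x,y$ is adjacent to both''. In the ``only if'' direction you also assert that connectivity rules out a vertex adjacent to neither terminal. Both claims fail. Take the triangle $xyw$ with a pendant vertex $w'$ attached at $w$:
\begin{itemize}
\item $R_G(x,y)=2/3=\frac13+\frac13$, so equality holds;
\item $N_G[x]=N_G[y]=\{x,y,w\}$;
\item $w'$ is adjacent to neither $x$ nor $y$, yet it sits at the common potential $c=R/2$ and carries no current.
\end{itemize}
This is exactly the situation your justification says cannot occur.

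The fix is to delete that sentence. Your Kirchhoff equation $[w\sim x](R-c)=[w\sim y]\,c$, together with $0<c<R$, already gives $w\sim x\iff w\sim y$ for every $w\in S$. That is precisely $N_G(x)\setminus\{y\}=N_G(y)\setminus\{x\}$, i.e.\ $N_G[x]=N_G[y]$. A vertex adjacent to neither terminal satisfies the equation trivially as $0=0$. Likewise, in the ``if'' direction, the potential $R/2$ on all of $S$ is harmonic at such vertices too, since all their neighbors lie in $S$. So your construction works once the incorrect claim is removed.
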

\begin{proof}
Let $\theta$ be the electrical unit $x$--$y$ flow.  If $x$ and $y$ are
nonadjacent, the two sets of incident edges are disjoint.  Cauchy--Schwarz
at the source and sink gives
\[
 \sum_{e\ni x}\theta_e^2\ge\frac1{d_G(x)},\qquad
 \sum_{e\ni y}\theta_e^2\ge\frac1{d_G(y)}.
\]
Their sum is part of the total energy, proving~\textup{(i)}.

Now suppose $xy\in E(G)$ and put $p=d_G(x)$, $q=d_G(y)$.  If $p=1$ or
$q=1$, the edge $xy$ is a bridge at a leaf and the assertion is immediate;
equality occurs only for $G=K_2$, when the closed neighborhoods agree.
Assume $p,q\ge2$, orient $xy$ from $x$ to $y$, and write
$c=\theta_{xy}$.  The remaining edges at $x$ carry total current $1-c$ away
from $x$, and the remaining edges at $y$ carry total current $1-c$ into
$y$.  Another application of Cauchy--Schwarz yields
\begin{align*}
 R_G(x,y)=\sum_e\theta_e^2
 &\ge c^2+\frac{(1-c)^2}{p-1}+\frac{(1-c)^2}{q-1}\\
 &\ge \frac{p+q-2}{pq-1}
 =\frac1{p+1}+\frac1{q+1}
   +\frac{(p-q)^2}{(p+1)(q+1)(pq-1)}.
\end{align*}
This proves~\textup{(ii)}.  Equality in its stated bound first forces
$p=q=:d$, equality in both Cauchy--Schwarz estimates, and zero current on
every edge not incident with $x$ or $y$.  With the sink potential normalized
to $0$, the source potential is $2/(d+1)$ and every other neighbor of either
terminal has potential $1/(d+1)$.  Kirchhoff's law then forces each neighbor
of $x$ other than $y$ also to be adjacent to $y$, and conversely; hence
$N_G[x]=N_G[y]$.

Conversely, if $N_G[x]=N_G[y]$, write
$d=d_G(x)=d_G(y)$ and assign potentials $1$ at $x$, $0$ at $y$, and $1/2$
at every other vertex.  This is harmonic away from $x,y$ and the source
current is $(d+1)/2$, so $R_G(x,y)=2/(d+1)$, which is equality in the stated
bound.
\end{proof}

For a nonempty vertex set $S$ and $z\notin S$, let $N/S$ identify $S$ to
one vertex $s_*$, and write $\mathcal P(S)$ for the probability vectors
supported on $S$.

\begin{lemma}[Wiring and variance]\label{lem:set-contraction}
Define $R_N(z,S)=R_{N/S}(z,s_*)$. Then
\begin{equation}\label{eq:set-contraction}
 R_N(z,S)=\min_{\mu\in\mathcal P(S)}Q_N(\ee_z-\mu).
\end{equation}
For every $\nu\in\mathcal P(S)$,
\begin{equation}\label{eq:variance-bound}
 \max_{x\in\operatorname{supp}\nu}R_N(z,x)
 \ge R_N(z,S)+\frac12\sum_{x,y}\nu_x\nu_yR_N(x,y).
\end{equation}
In particular, if $x,y\in S$ and $R_N(x,y)=1/2$, then
\begin{equation}\label{eq:half-resistance}
 \max\{R_N(z,x),R_N(z,y)\}\ge R_N(z,S)+\frac18.
\end{equation}
\end{lemma}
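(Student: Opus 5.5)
We prove \eqref{eq:set-contraction} via the quadratic-form structure of $Q_N$, and \eqref{eq:variance-bound} as a parallelogram-type expansion of the convex function $\mu\mapsto Q_N(\ee_z-\mu)$.

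\emph{Step 1: the minimization formula.} Any unit flow in $N/S$ from $z$ to $s_*$ lifts to a flow in $N$: every edge of $N/S$ is an edge of $N$. In $N$ its divergence is $\ee_z-\mu$, where $\mu_x\ge0$ is the net current absorbed at $x\in S$. Conversely, every flow in $N$ with divergence $\ee_z-\mu$, $\mu\in\mathcal P(S)$, projects to a unit $z$--$s_*$ flow in $N/S$ with the same energy. One subtlety is that edges inside $S$ become loops and are discarded; dropping their current only lowers the energy. A second is that such a lift might produce some $\mu_x<0$. This is harmless: minimizing Thomson's energy \eqref{eq:thomson} over all unit-mass $\mu$ supported on $S$ gives the same value as the minimum over $N/S$. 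The electrical flow in $N/S$ has potentials $\le$ that of $z$ and $\ge$ that of $s_*$. Lifted to $N$, the vertices of $S$ share the minimal potential, so current only enters $S$. Hence the optimal $\mu$ is nonnegative, and the minimum over $\mathcal P(S)$ equals $R_{N/S}(z,s_*)$.

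\emph{Step 2: the variance inequality.} Take $\nu\in\mathcal P(S)$ and set $v_x=\ee_z-\ee_x$. Then $\ee_z-\nu=\sum_x\nu_x v_x$. Since $Q_N$ is a squared Hilbert norm on $\R_0^{V(N)}$, the standard identity for convex combinations gives
\[
 \sum_x\nu_x\|v_x\|_N^2=\Bigl\|\sum_x\nu_xv_x\Bigr\|_N^2+\frac12\sum_{x,y}\nu_x\nu_y\|v_x-v_y\|_N^2 .
\]
To check it, expand both sides; it is the identity $\mathbb E\|X\|^2=\|\mathbb EX\|^2+\tfrac12\mathbb E\|X-X'\|^2$ for independent copies $X,X'$. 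Now $\|v_x\|_N^2=R_N(z,x)$ and $\|v_x-v_y\|_N^2=R_N(x,y)$. By Step 1, $\|\ee_z-\nu\|_N^2\ge R_N(z,S)$. Finally, the left side is at most $\max_{x\in\operatorname{supp}\nu}R_N(z,x)$, which gives \eqref{eq:variance-bound}.

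\emph{Step 3: the special case.} Take $\nu=\tfrac12(\ee_x+\ee_y)$. The double sum is $2\cdot\tfrac14 R_N(x,y)=\tfrac14\cdot\tfrac12$, and half of it is $\tfrac18$, which yields \eqref{eq:half-resistance}. For this case the statement needs $x\ne y$, which is automatic since $R_N(x,y)=1/2$.

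The main obstacle is the sign issue in Step 1, namely justifying that the optimizer lies in $\mathcal P(S)$ rather than merely among unit-mass vectors on $S$. The cleanest route is the potential (maximum-principle) argument sketched there. Alternatively, since only the inequality $Q_N(\ee_z-\nu)\ge R_N(z,S)$ is used in Step 2, the lifting direction alone gives $\min\ge R_{N/S}$. The reverse inequality needs only one nonnegative lift, and the electrical flow of $N/S$ provides it.
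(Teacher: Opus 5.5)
Your proposal is correct and matches the paper's proof: you get \eqref{eq:set-contraction} by projecting flows to $N/S$ and lifting the electrical flow back, with the maximum principle making $\mu$ nonnegative, and you get the variance bound from the convex-combination identity for $\|\ee_z-\nu\|_N^2$ together with $\|\ee_z-\nu\|_N^2\ge R_N(z,S)$. One small slip in Step 3: the double sum is $2\cdot\tfrac14 R_N(x,y)=\tfrac12\cdot\tfrac12=\tfrac14$ (not $\tfrac14\cdot\tfrac12$), so half of it is indeed the $\tfrac18$ you conclude.
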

\begin{proof}
A flow with demand $\ee_z-\mu$ descends after contraction to a unit
$z$--$s_*$ flow with no greater energy. Conversely, pull back the electrical
potential from $N/S$, setting it to zero on $S$. The maximum principle
makes every extraction at $S$ nonnegative, so these extractions form an
admissible $\mu$. Thomson's principle proves \eqref{eq:set-contraction}.
Since $\nu$ is a probability vector, expanding $\|\ee_z-\nu\|_N^2$ as a
convex combination of two-point energies,
$\|\ee_z-\nu\|_N^2=\sum_x\nu_x\|\ee_z-\ee_x\|_N^2
-\frac12\sum_{x,y}\nu_x\nu_y\|\ee_x-\ee_y\|_N^2$, and using
$R_N(i,j)=\|\ee_i-\ee_j\|_N^2$, gives
\begin{equation}\label{eq:set-variance-identity}
 Q_N(\ee_z-\nu)=\sum_x\nu_xR_N(z,x)
 -\frac12\sum_{x,y}\nu_x\nu_yR_N(x,y).
\end{equation}
The left side is at least $R_N(z,S)$, and the first term on the right is at
most the stated maximum. The last assertion uses
$\nu=(\ee_x+\ee_y)/2$.
\end{proof}

\subsection{Point insertion and comparison under compression}

\begin{lemma}[Point insertion]\label{lem:point-insertion}
Let $0<t<1$, and split a resistor $uv$ of resistance $r$ in a network $N$ at a new point $z$, assigning resistance $rt$ to $uz$ and $r(1-t)$ to $zv$.  Denote the subdivided network by $\widetilde N$.  For every unit-mass
vector $\eta$ supported on $V(N)$,
\begin{equation}\label{eq:point-insertion}
 Q_{\widetilde N}(\ee_z-\eta)
 =Q_N\bigl((1-t)\ee_u+t\ee_v-\eta\bigr)+rt(1-t).
\end{equation}
\end{lemma}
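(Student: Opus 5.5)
We will prove \eqref{eq:point-insertion} with Thomson's principle \eqref{eq:thomson}, applied in $\widetilde N$ to the demand $\ee_z-\eta$. The idea is to decompose every admissible flow according to the current carried by the two halves of the split resistor.

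\emph{Parametrizing flows.} Let $\theta$ be a flow in $\widetilde N$ with $\partial\theta=\ee_z-\eta$. Write $a$ for the current from $z$ to $u$ along $uz$ and $b$ for the current from $z$ to $v$ along $zv$. Conservation at $z$ forces $a+b=1$, so we may write $a=1-t+s$ and $b=t-s$ for a free parameter $s\in\R$. On the remaining edges $\theta$ is a flow in $N$ with the edge $uv$ deleted; we then add a current $s$ on $uv$ from $v$ to $u$. The result is a flow $\theta'$ in $N$ with demand $(1-t)\ee_u+t\ee_v-\eta$: vertex $u$ receives $a$, but $s$ of it is rerouted to look like $uv$-current, and similarly at $v$. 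This correspondence between pairs $(\theta,s)$ and flows $\theta'$ is a bijection, and the current of $\theta'$ on $uv$ equals $s$ plus whatever $\theta'$ would otherwise route there. It is cleaner to set it up the other way round: $\theta'$ is an arbitrary flow in $N$ with the given demand, and we let $c$ denote its current on $uv$ from $u$ to $v$.

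\emph{Comparing energies.} Given $\theta'$, define $\theta$ in $\widetilde N$ to agree with $\theta'$ off $uv$, with currents $z\to u$ equal to $1-t-c$ and $z\to v$ equal to $t+c$. One checks at $u$, $v$, and $z$ that $\partial\theta=\ee_z-\eta$, and that every flow in $\widetilde N$ with this demand arises in exactly this way. The energies then differ only on the split edge:
\[
 rt(1-t-c)^2+r(1-t)(t+c)^2-rc^2 .
\]
Expanding, the terms linear in $c$ give $-2rt(1-t)c+2rt(1-t)c=0$, and the $c^2$ terms give $rt+r(1-t)-r=0$. What remains is the constant $rt(1-t)^2+r(1-t)t^2=rt(1-t)$. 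So the energy of $\theta$ in $\widetilde N$ equals the energy of $\theta'$ in $N$ plus $rt(1-t)$, identically in $\theta'$. Minimizing both sides over the bijection and applying \eqref{eq:thomson} gives \eqref{eq:point-insertion}.

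The only delicate step is the bookkeeping of orientations and signs at $u$ and $v$, which must ensure that the map $\theta'\mapsto\theta$ is onto the admissible flows. Everything else is routine algebra. As a cross-check, the result can also be obtained by Schur-eliminating $z$, which recovers $N$ with $uv$ of resistance $r$. Under this reduction the demand $\ee_z$ pushes forward to $(1-t)\ee_u+t\ee_v$, and the constant $rt(1-t)$ is the Schur remainder, namely the resistance of $uz$ in parallel with $zv$.
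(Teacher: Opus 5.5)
Your proof is correct and is essentially the paper's argument: the same bijection sending a flow in $N$ with current $c$ on $uv$ to the flow in $\widetilde N$ with currents $c-(1-t)$ on $uz$ and $c+t$ on $zv$, the same constant energy gap $rt(1-t)$, and then Thomson's principle. Your first-paragraph parametrization by $s$ is muddled and can simply be dropped in favor of the second setup, and your Schur-elimination cross-check is also correct.
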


\begin{proof}
Orient $uv$ from $u$ to $v$, and put $\nu=(1-t)\ee_u+t\ee_v$.
Given a flow in $N$ with demand $\nu-\eta$ and current $j$ on $uv$, keep
all other currents and assign currents $j-(1-t)$ and $j+t$ to $u z$ and
$zv$, respectively. The resulting flow has demand $\ee_z-\eta$ in
$\widetilde N$. This correspondence is bijective, and
\[
 rt\,[j-(1-t)]^2+r(1-t)(j+t)^2=rj^2+rt(1-t).
\]
Thus the energies of corresponding flows differ by the constant
$rt(1-t)$. Taking minima in Thomson's principle proves
\eqref{eq:point-insertion}.
\end{proof}

Taking $\eta=\ee_x$ and expanding the energy also gives
\begin{equation}\label{eq:degree-two-specialized}
 R_{\widetilde N}(z,x)
 =(1-t)R_N(u,x)+tR_N(v,x)+t(1-t)\bigl(r-R_N(u,v)\bigr).
\end{equation}

\begin{remark}\label{rem:point-insertion-endpoints}
The identity extends to $t=0$ and $t=1$ by deleting the resulting zero-length segment: it then reduces respectively to
$Q_N(\ee_u-\eta)$ and $Q_N(\ee_v-\eta)$.  In the applications below the inserted edge-nodes satisfy $0<t<1$.
\end{remark}

\begin{proposition}[Transfer under edge compression]\label{prop:general-transfer}
Let $B$ be a weighted network with edge resistances $r_g$, and let $K$ have
the same underlying graph with resistances $0<\tau_g\le\beta r_g$, where
$0<\beta\le1$. Subdivide each edge of $B$ at a finite grid containing both
endpoints, and let $D$ be the resistance diameter of the resulting grid
vertices. Select interior points $z_h,z_k$ on distinct edges of $K$, with
relative coordinates $t_h,t_k$.

For $i=h,k$, choose consecutive grid coordinates bracketing $t_i$, and
let $X_i$ take these two values with mean $t_i$. Put
\[
 \varepsilon_i=\operatorname{Var}(X_i),\qquad
 \rho_i=R_B(u_i,v_i),\qquad
 \nu_i(t)=(1-t)\ee_{u_i}+t\ee_{v_i}.
\]
Choose $X_h,X_k$ independently. In the network obtained by inserting the
two selected points into $K$,
\begin{equation}\label{eq:general-transfer}
 R(z_h,z_k)\le\beta D+C_h+C_k,
\end{equation}
where
\begin{equation}\label{eq:general-error}
 C_i=(\tau_i-\beta r_i)t_i(1-t_i)
       +\beta(r_i-\rho_i)\varepsilon_i.
\end{equation}
Thus $C_h+C_k\le(1-\beta)D$ is sufficient for $R(z_h,z_k)\le D$.
\end{proposition}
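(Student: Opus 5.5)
The plan is to express $R(z_h,z_k)$ exactly through the compressed network $K$ using Lemma~\ref{lem:point-insertion} twice, to pass from $K$ to $B$ by scaling energies, and then to compare with grid resistances after averaging over the independent roundings $X_h,X_k$.

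\emph{Step 1 (exact formula in $K$).} Let $K'$ be $K$ with only $z_k$ inserted, and $K''$ the network with both points inserted. Since $z_h$ lies on a different edge, I apply Lemma~\ref{lem:point-insertion} in $K'$ with $\eta=\ee_{z_k}$. This gives
\[
R(z_h,z_k)=Q_{K'}\bigl(\ee_{z_k}-\nu_h(t_h)\bigr)+\tau_h t_h(1-t_h).
\]
I then apply the lemma again in $K$ with the unit-mass vector $\eta=\nu_h(t_h)$ and obtain
\[
R(z_h,z_k)=Q_K\bigl(\nu_h(t_h)-\nu_k(t_k)\bigr)+\tau_h t_h(1-t_h)+\tau_k t_k(1-t_k).
\]
Because $\tau_g\le\beta r_g$ on every edge, Thomson's principle \eqref{eq:thomson} gives $Q_K(\zeta)\le\beta\,Q_B(\zeta)$ for every $\zeta\in\R_0^{V}$. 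It therefore suffices to bound $Q_B\bigl(\nu_h(t_h)-\nu_k(t_k)\bigr)$.

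\emph{Step 2 (averaging).} The map $t\mapsto\nu_i(t)$ is affine, and $\mathbb E X_i=t_i$. Hence
\[
\nu_h(X_h)-\nu_k(X_k)=\bigl(\nu_h(t_h)-\nu_k(t_k)\bigr)+\Delta,
\]
where
\[
\Delta=(X_h-t_h)(\ee_{v_h}-\ee_{u_h})-(X_k-t_k)(\ee_{v_k}-\ee_{u_k})
\]
has mean zero. Expanding the quadratic form $Q_B$, the linear cross term vanishes in expectation. By independence, the cross term inside $\mathbb E\,Q_B(\Delta)$ also vanishes, so
\[
Q_B\bigl(\nu_h(t_h)-\nu_k(t_k)\bigr)=\mathbb E\,Q_B\bigl(\nu_h(X_h)-\nu_k(X_k)\bigr)-\varepsilon_h\rho_h-\varepsilon_k\rho_k .
\]

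\emph{Step 3 (comparison with the grid).} For grid coordinates $s_h,s_k$ take the corresponding grid points $g_h,g_k$ on the two distinct edges of $B$. I apply the same double insertion as in Step~1, with Remark~\ref{rem:point-insertion-endpoints} covering $s\in\{0,1\}$. This gives
\[
R(g_h,g_k)=Q_B\bigl(\nu_h(s_h)-\nu_k(s_k)\bigr)+r_hs_h(1-s_h)+r_ks_k(1-s_k).
\]
This is the resistance in the grid-subdivided network. The other grid points are degree-two series nodes, which carry no current between terminals outside them and do not change resistances between the terminals, so the identity holds there too. Each such resistance is at most $D$. Therefore
\[
Q_B\bigl(\nu_h(X_h)-\nu_k(X_k)\bigr)\le D-r_hX_h(1-X_h)-r_kX_k(1-X_k)
\]
pointwise. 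Taking expectations and using $\mathbb E[X_i(1-X_i)]=t_i(1-t_i)-\varepsilon_i$, I obtain
\[
Q_B\bigl(\nu_h(t_h)-\nu_k(t_k)\bigr)\le D-\sum_{i=h,k}\bigl[r_it_i(1-t_i)-(r_i-\rho_i)\varepsilon_i\bigr].
\]

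\emph{Step 4 (assembly).} Multiplying by $\beta$ and adding the $\tau_i t_i(1-t_i)$ terms from Step~1 yields exactly \eqref{eq:general-transfer} with the errors \eqref{eq:general-error}. The final sentence of the proposition is then immediate, since $\beta D+C_h+C_k\le D$ whenever $C_h+C_k\le(1-\beta)D$.

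The only delicate points I anticipate are bookkeeping ones. One is checking that the two insertions are legitimate in sequence, which needs the selected edges to be distinct and the second $\eta$ to be a unit-mass vector supported on old vertices. The other is justifying that intermediate grid nodes do not affect the grid resistances.
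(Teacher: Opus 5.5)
Your proposal is correct and follows the paper's proof essentially step for step: the exact double point-insertion formula in $K$, the Thomson comparison $Q_K\le\beta Q_B$, the rounding identity obtained from independence and quadratic expansion, and the bound of each realized grid resistance by $D$. One wording fix: the intermediate grid points do carry current, so they may be ignored because of the series rule (Schur elimination of degree-two nodes), not because they carry none.
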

\begin{proof}
Thomson's principle gives, for every zero-sum demand,
\begin{equation}\label{eq:PSD}
 Q_K(\zeta)\le\beta Q_B(\zeta).
\end{equation}
By point insertion,
$R(z_h,z_k)=Q_K(\nu_h(t_h)-\nu_k(t_k))+
\sum_{i=h,k}\tau_it_i(1-t_i)$.
Independence and quadratic expansion give
\begin{align}\label{eq:rounding-identity}
 &\mathbb E\!\left[Q_B(\nu_h(X_h)-\nu_k(X_k))
                 +\sum_{i=h,k}r_iX_i(1-X_i)\right]\notag\\
 &\qquad=Q_B(\nu_h(t_h)-\nu_k(t_k))
       +\sum_{i=h,k}\left[r_it_i(1-t_i)-(r_i-\rho_i)\varepsilon_i\right].
\end{align}
Each realization on the left is the resistance between two original grid
vertices, including resistance zero when they coincide. Hence that
expectation is at most $D$. Combining this with \eqref{eq:PSD} gives
\eqref{eq:general-transfer}.
\end{proof}

The two terms in \eqref{eq:general-error} describe different effects:
shortening the edge reduces the first, while rounding introduces the second.
The proposition imposes no degree or regularity condition on the ambient
network. The work specific to line graphs is to bound the sum of these
errors for their degree-dependent compression.

\section{The line graph and its branch core}\label{sec:core}
\subsection{The incidence-star representation and leaf reduction}
Define $\cS(G)$ on $V(G)\sqcup E(G)$ by joining each hub $v$ to its incident
edge-nodes with resistors of resistance $1/d_G(v)$.

\begin{lemma}[Star representation]\label{lem:star}
For $e,f\in E(G)$, $R_{L(G)}(e,f)=R_{\cS(G)}(e,f)$.
\end{lemma}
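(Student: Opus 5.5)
The plan is to eliminate the hubs of $\cS(G)$ one at a time by Schur complement (Kron reduction). Each hub $v$ is an interior vertex of $\cS(G)$, since only edge-nodes are terminals. Its neighbours are exactly the edge-nodes incident with $v$, and it carries $d=d_G(v)$ arms of resistance $1/d$, that is, conductance $d$ each. Two different hubs are never adjacent in $\cS(G)$, so eliminating one hub leaves the star structure at every other hub unchanged. The hubs can therefore be eliminated independently, and by the remark in Section~\ref{sec:tools} each elimination preserves all resistances between retained vertices. After all hubs are removed we are left with a network on $E(G)$, and we must show it is exactly $L(G)$ with unit resistors.

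The key local computation is the star--mesh transformation for a single hub $v$. Eliminating a vertex joined by conductances $c_1,\dots,c_d$ to $e_1,\dots,e_d$ produces conductance $c_ic_j/\sum_l c_l$ between $e_i$ and $e_j$ for $i\ne j$. Here every $c_i=d$, so each new conductance is $d\cdot d/(d\cdot d)=1$. Equivalently, the Schur complement of the $(d+1)\times(d+1)$ star Laplacian at the hub is
\[
 dI-\tfrac{d^2}{d^2}J=dI-J,
\]
which is the Laplacian of $K_d$ with unit conductances. The cases $d=1$ (a leaf hub) and $d=2$ need no special treatment: for $d=1$ the hub is a pendant vertex and produces no edges, and for $d=2$ two series resistors of $1/2$ produce one unit resistor.

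Next, sum over all hubs. Distinct edges $e,f$ of the simple graph $G$ share at most one endpoint, so each adjacent pair in $L(G)$ receives exactly one unit conductance, and nonadjacent pairs receive none. The reduced network is therefore $L(G)$ with unit resistors and no parallel duplicates, and Kron reduction gives $R_{L(G)}(e,f)=R_{\cS(G)}(e,f)$. Connectivity of $\cS(G)$ follows from that of $G$ together with the assumption that $G$ has at least one edge.

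There is no genuine obstacle; the only care needed is bookkeeping. The Schur complements at different hubs must commute, which holds because hubs are pairwise nonadjacent, so each elimination modifies only entries among that hub's own edge-nodes. Simplicity of $G$ is what rules out parallel edges in the final network.
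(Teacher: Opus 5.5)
Your proposal is correct and follows essentially the same route as the paper: eliminate each hub by a Schur complement to get $dI_d-J_d$, the Laplacian of the unit clique $K_d$ (star--mesh equivalence), and use simplicity of $G$ so that two edge-nodes receive at most one unit conductance. Your remarks that the eliminations commute because hubs are pairwise nonadjacent, and on the degenerate cases $d=1,2$, only make explicit bookkeeping that the paper leaves implicit.
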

\begin{proof}
At a hub of degree $d$, Schur elimination gives the boundary Laplacian
\[
 dI_d-(-d\mathbf1)(d^2)^{-1}(-d\mathbf1^{\mathsf T})=dI_d-J_d,
\]
which is the Laplacian of the unit clique $K_d$. This is the classical
star--mesh equivalence \cite{Kennelly1899}. Eliminating every hub produces
$L(G)$: simplicity ensures that two distinct edges share at most one
endpoint. Boundary resistances are preserved.
\end{proof}

Eliminating the edge-nodes instead gives a network on the hubs with
resistances
\begin{equation}\label{eq:hub-weight}
 \omega_{xy}=\frac1{d_G(x)}+\frac1{d_G(y)}.
\end{equation}
Between nonleaf hubs, dangling leaf branches carry no current. Every
remaining edge has $\omega_{xy}\le1$, so Rayleigh monotonicity gives
\begin{equation}\label{eq:hub-rayleigh}
 R_{\cS(G)}(x,y)\le R_G(x,y).
\end{equation}

\begin{proposition}\label{prop:leaf}
Suppose the diameter inequality in Theorem~\ref{thm:main} is known for every
connected graph with fewer edges than $G$.  If $G$ has a leaf, then
\[
   \Dr(L(G))\le \Dr(G).
\]
\end{proposition}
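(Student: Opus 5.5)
We argue by deleting a leaf. Let $\ell$ be a leaf of $G$, let $v$ be its unique neighbour, and put $f=\ell v$ and $G'=G-\ell$. If $G=K_2$, then $L(G)$ is a single vertex and $\Dr(L(G))=0<1=\Dr(G)$. Otherwise $G'$ is connected and has at least one edge. Since $G$ is connected and $G\ne K_2$, the vertex $v$ has a neighbour other than $\ell$, so $d:=d_G(v)\ge2$. We bound separately the resistances in $L(G)$ between two edges of $G'$ and those between $f$ and another edge.

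\emph{Pairs inside $G'$.} The graph $L(G)$ is obtained from $L(G')$ by adding the vertex $f$ and joining it to the edges of $G'$ at $v$. For $e,e'\in E(G')$, Rayleigh monotonicity (adding a vertex and edges) gives
\[
R_{L(G)}(e,e')\le R_{L(G')}(e,e')\le \Dr(L(G')).
\]
By the hypothesis of the proposition, $\Dr(L(G'))\le\Dr(G')$. The leaf branch at $v$ is attached at a single vertex and carries no current between vertices of $G'$, so $R_G=R_{G'}$ on $V(G')$. Hence $\Dr(G')\le\Dr(G)$.

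\emph{Pairs involving $f$.} We work in $\cS(G)$ and use Lemma~\ref{lem:star}. The hub $\ell$ has degree one and hangs from $f$, so it is a dangling branch and may be deleted. The edge-node $f$ is then joined to the rest of the network only through hub $v$, by a resistor of resistance $1/d$. So $f$ is attached at the cut vertex $v$, and for every $e\in E(G')$,
\[
R_{L(G)}(f,e)=\frac1d+R_{\cS(G)}(v,e).
\]
Write $e=xy$. Since $G\ne K_2$ and $G'$ is connected with an edge, at least one endpoint of $e$, say $x$, satisfies $d_G(x)\ge2$.

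\emph{Subcase $x=v$.} Going directly from $v$ to the edge-node $e$ along the resistor of resistance $1/d$ gives $R_{\cS(G)}(v,e)\le 1/d$. Therefore $R_{L(G)}(f,e)\le 2/d\le1=R_G(\ell,v)\le\Dr(G)$.

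\emph{Subcase $x\ne v$.} Use the triangle inequality through hub $x$, together with \eqref{eq:hub-rayleigh}:
\[
R_{\cS(G)}(v,e)\le R_{\cS(G)}(v,x)+\frac1{d_G(x)}\le R_G(v,x)+\frac12.
\]
This gives
\[
R_{L(G)}(f,e)\le \frac12+\frac12+R_G(v,x)=1+R_G(v,x)=R_G(\ell,x)\le\Dr(G),
\]
because resistances add across the cut vertex $v$ and $\ell$ is joined to $v$ by a unit resistor. Finally, $R(f,f)=0$. Combining all pairs proves $\Dr(L(G))\le\Dr(G)$.

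The main thing to get right is the handling of the degenerate configurations: the case $G=K_2$, and checking that in every other case $d\ge2$ and a non-leaf endpoint $x$ of $e$ exists. Only then are both constants $1/d$ and $1/d_G(x)$ at most $\frac12$. Note that the pairs involving $f$ do not use the induction hypothesis; it enters only through $\Dr(L(G'))\le\Dr(G')$.
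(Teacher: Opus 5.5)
Your proof is correct and matches the paper's argument step for step. The pendant edge-node hangs from its hub by a resistor $1/d$; its resistance to any other edge-node is bounded by the triangle inequality through a nonleaf endpoint together with \eqref{eq:hub-rayleigh}, giving at most $R_G(\ell,x)$; and pairs avoiding the leaf edge follow from Rayleigh monotonicity and the hypothesis applied to $G-\ell$. Your separate subcase $x=v$ is harmless, and the paper simply absorbs it into the general chain because $R_{\cS(G)}(v,v)=0$.
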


\begin{proof}
Let $e=uv$ with $d_G(v)=1$ and $p=d_G(u)$.  The case $G=K_2$ is immediate,
so assume $p\ge2$.  After the dangling hub $v$ is discarded from $\cS(G)$, by Lemma~\ref{lem:star}
the edge-node $e$ is a leaf joined to $u$ by resistance $1/p$.

Let $f\ne e$, and choose an endpoint $x$ of $f$ with $q=d_G(x)\ge2$.
The resistance triangle inequality, the direct spoke from $x$ to $f$, and
\eqref{eq:hub-rayleigh} give
\[
\begin{aligned}
R_{L(G)}(e,f)
 &=\frac1p+R_{\cS(G)}(u,f)\\
 &\le \frac1p+R_{\cS(G)}(u,x)+\frac1q\\
 &\le 1+R_G(u,x)=R_G(v,x)\le \Dr(G).
\end{aligned}
\]
For $f,g\ne e$, deleting the node $e$ from the line graph and using
Rayleigh monotonicity yields
\[
 R_{L(G)}(f,g)\le R_{L(G-v)}(f,g)
 \le\Dr(G-v)\le\Dr(G).
\]
The last inequality holds because deleting a leaf does not change
resistances between the remaining vertices.
\end{proof}

It remains, by induction, to prove the diameter inequality when the minimum degree is at least two.

\subsection{Degree-two threads and compression}

After the leaf reduction, assume that $\delta(G)\ge2$. If every vertex has degree two, then
$G$ is a cycle and the theorem is immediate.  Otherwise let
\[
   H=\{v\in V(G):d_G(v)\ge3\}.
\]
Every edge lies on a maximal path whose internal vertices have degree two
and whose ends lie in $H$, with coincident ends allowed. We call such a path
a \emph{thread}. An \emph{ordinary thread} has distinct ends; a
\emph{loop thread} returns to the same branch vertex. The threads partition
the edge set.

Delete the interiors and edges of all loop threads, retaining their roots.
The remaining unit-edge graph $\widetilde B$ is the \emph{expanded ordinary
core}. It is connected: a loop thread cannot connect two branch vertices.
If no ordinary thread exists, then $\widetilde B$ is a single branch vertex
and $G$ is a bouquet of loop threads, treated in Section~\ref{sec:loops}.
Each deleted loop is a one-port network, so
\begin{equation}\label{eq:ordinary-preservation}
 R_{\widetilde B}(x,y)=R_G(x,y)
 \quad(x,y\in V(\widetilde B)),
 \qquad D:=\Dr(\widetilde B)\le\Dr(G).
\end{equation}
Loop threads are retained separately as resistor circles when a terminal
lies on them; replacing them by electrical self-loops would lose this
information.

Let an ordinary thread $h$ be
\[
u_h=v_0,v_1,\ldots,v_{L_h}=v_h,
\]
where its internal vertices have degree two.  Put
\begin{equation}\label{eq:thread-pars}
\begin{gathered}
 p_h=d_G(u_h),\qquad q_h=d_G(v_h),\qquad
 a_h=\frac1{p_h},\qquad b_h=\frac1{q_h},\\
 \delta_h=1-a_h-b_h,\qquad
 \tau_h=L_h-\delta_h=L_h-1+a_h+b_h.
\end{gathered}
\end{equation}
Suppressing the degree-two hubs in $\cS(G)$ turns $h$ into a resistor of resistance $\tau_h$.  The edge-node corresponding to $v_mv_{m+1}$ lies at distance $m+a_h$ from $u_h$.  Thus define
\begin{equation}\label{eq:t-nu-k}
 t_h=\frac{m+a_h}{\tau_h},\qquad
 \nu_h(t)=(1-t)\ee_{u_h}+t\ee_{v_h},\qquad
 k_h=\tau_ht_h(1-t_h).
\end{equation}

Suppress the internal vertices of the ordinary threads to obtain a network
$B$ on $H$, with resistance $L_h$ on thread $h$. Let $K$ be the corresponding
compressed star network, with resistance $\tau_h$ on that thread. Parallel
edges are allowed in $B$ and $K$. All endpoint degree labels remain the
degrees in $G$, including incidences belonging to the deleted loop threads.

\begin{lemma}[Short-circuit bounds]\label{lem:rho-bounds}
Let $h$ be an ordinary thread with endpoints $u,v$, length $L$, and full-graph endpoint degrees $p=d_G(u)$, $q=d_G(v)$.  Write $\rho=R_B(u,v)$.  The following short-circuit bounds will be used repeatedly.
\begin{enumerate}[label=\textup{(\roman*)}]
\item If $L=1$, then
\begin{equation}\label{eq:adj-bound}
 \rho\ge \frac{p+q-2}{pq-1}.
\end{equation}
\item If $L\ge2$, then
\begin{equation}\label{eq:thread-rho-bar}
 \rho\ge\bar\rho:=
 \left[\frac1L+1+\frac{(p-2)(q-2)}{p+q-4}\right]^{-1}.
\end{equation}
\end{enumerate}
\end{lemma}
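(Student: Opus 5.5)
Both bounds come from short-circuiting $G$ itself, so I first move from $B$ back to $G$. The network $B$ is obtained from the expanded ordinary core $\widetilde B$ by series reduction of degree-two vertices, which preserves resistances between branch vertices. By \eqref{eq:ordinary-preservation}, deleting the loop threads does not change resistances between core vertices either. Hence $\rho=R_B(u,v)=R_G(u,v)$. Since $u,v\in H$, both endpoints have full degrees $p,q\ge3$. I can therefore work in the simple graph $G$, where every incident edge (including the two edges of each loop thread at its root) has unit resistance.

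\emph{Part (i).} If $L=1$, then $uv\in E(G)$ and $p,q\ge2$. The proof of Lemma~\ref{lemma:degree-bound}(ii) already establishes the intermediate inequality
\[
R_G(x,y)\ge c^2+\frac{(1-c)^2}{p-1}+\frac{(1-c)^2}{q-1}\ge\frac{p+q-2}{pq-1}
\]
for adjacent vertices of degrees $p,q\ge2$. So \eqref{eq:adj-bound} follows by quoting that computation with $x=u$, $y=v$, $c=\theta_{uv}$.

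\emph{Part (ii).} Let $L\ge2$. I short-circuit every vertex of $G$ other than $u$, $v$ and the interior vertices of $h$ into one node $w$; by Rayleigh monotonicity this can only decrease $R_G(u,v)$. The resulting network has three parallel branches between $u$ and $v$:
\begin{itemize}
\item the thread $h$ itself, a path of resistance $L$;
\item the direct edge $uv$, present with multiplicity $s\in\{0,1\}$ because $G$ is simple;
\item a route through $w$, consisting of $p-1-s$ unit edges from $u$ to $w$ in series with $q-1-s$ unit edges from $w$ to $v$.
\end{itemize}
(Every other thread from $u$ or $v$ has length at least $2$ or ends elsewhere, so its first edge lands in $w$; loop-thread edges at $u$ also land in $w$.) Adding the conductances of the three branches gives
\[
\rho^{-1}\le\frac1L+s+\frac{(p-1-s)(q-1-s)}{p+q-2-2s}.
\]
When $s=1$ this is exactly $\bar\rho^{-1}$; note that $p+q-4>0$ since $p,q\ge3$.

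The step I expect to need care is the case $s=0$. There I must show
\[
\frac{(p-1)(q-1)}{p+q-2}\le1+\frac{(p-2)(q-2)}{p+q-4}.
\]
Put $a=p-2$, $b=q-2$ and $f(x,y)=xy/(x+y)$. Since $f$ is concave and positively homogeneous, it is superadditive, so
\[
f(a+1,b+1)\le f(a,b)+f(1,1)=\frac{ab}{a+b}+\frac12 .
\]
This gives the required inequality with room to spare, and in both cases $\rho\ge\bar\rho$, proving \eqref{eq:thread-rho-bar}.
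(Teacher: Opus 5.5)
\textbf{The gap is in the $s=0$ comparison in part (ii).} Everything before that step is correct. The reduction $\rho=R_B(u,v)=R_G(u,v)$ is right. Part (i) also works: you quote the Cauchy--Schwarz bound from the proof of Lemma~\ref{lemma:degree-bound}(ii), which is the flow-side dual of the paper's short-circuit computation and gives the same value $\frac{p+q-2}{pq-1}$. In part (ii), your wiring computation is the paper's argument, done directly in $G$, so you need neither auxiliary arms nor an empty-exterior case.

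\textbf{The superadditivity step runs the wrong way.} A concave, positively homogeneous $f$ satisfies $f(x+y)=2f\bigl(\tfrac{x+y}{2}\bigr)\ge f(x)+f(y)$. Superadditivity therefore gives $f(a+1,b+1)\ge f(a,b)+\tfrac12$, the opposite of what you wrote. Your claimed bound $f(a+1,b+1)\le f(a,b)+\tfrac12$ is in fact false whenever $p\ne q$. For $p=3$, $q=4$ one has $f(2,3)=\tfrac65>\tfrac76=f(1,2)+\tfrac12$. For $p=3$ and $q\to\infty$, the increment $f(2,b+1)-f(1,b)$ tends to $1$, so there is no room to spare either.

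\textbf{The repair is local, because the inequality you actually need is true.} Write $\Gamma_s$ for your conductance bound $\frac1L+s+\frac{(p-1-s)(q-1-s)}{p+q-2-2s}$; the claim is $\Gamma_0\le\Gamma_1$. With $a=p-2\ge1$ and $b=q-2\ge1$, a direct computation gives
\[
 \Gamma_1-\Gamma_0=1+\frac{ab}{a+b}-\frac{(a+1)(b+1)}{a+b+2}
 =\frac{2ab+a+b}{(a+b)(a+b+2)}>0 .
\]
This is the paper's identity $\Gamma_1-\Gamma_0=\frac{2pq-3p-3q+4}{(p+q-4)(p+q-2)}$ in other variables. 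Alternatively, you can use concavity in the correct direction through the tangent-plane bound:
\[
 f(a+1,b+1)\le f(a,b)+\partial_x f(a,b)+\partial_y f(a,b)=f(a,b)+\frac{a^2+b^2}{(a+b)^2}<f(a,b)+1 .
\]
With either replacement, your proof of \eqref{eq:thread-rho-bar} is complete.
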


\begin{proof}
For \eqref{eq:adj-bound}, short every vertex other than $u,v$ to one node.
The resulting conductance is at most
\[
 1+\frac{(p-1)(q-1)}{p+q-2}=\frac{pq-1}{p+q-2}.
\]
For \eqref{eq:thread-rho-bar}, let $e\in\{0,1\}$ record whether a direct endpoint edge exists outside the length-$L$ thread.  Wire all vertices outside the entire length-$L$ path to one node, retaining
its internal vertices. The conductance between its ends is at most
\[
 \Gamma_e=\frac1L+e+
 \frac{(p-1-e)(q-1-e)}{p+q-2-2e}.
\]
If there are no vertices outside the path, the conductance is
$1/L+e\le\Gamma_e$, so the same bound holds.
Moreover
\[
 \Gamma_1-\Gamma_0
 =\frac{2pq-3p-3q+4}{(p+q-4)(p+q-2)}>0
 \qquad(p,q\ge3).
\]
Thus $\rho\ge\Gamma_e^{-1}\ge\Gamma_1^{-1}$, which is
\eqref{eq:thread-rho-bar}.  Rayleigh monotonicity proves the claims.
\end{proof}

For the remainder of this section and Section~4, assume that the ordinary
core contains at least one thread.

\subsection{The same-thread case}

\begin{proposition}\label{prop:same-thread}
If two distinct vertices of $L(G)$ lie on the same ordinary thread, their
resistance in $L(G)$ is strictly less than $\Dr(G)$.
\end{proposition}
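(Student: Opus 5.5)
The plan is to compute both resistances in closed form as a single thread-segment in parallel with its complement, and then compare a two-variable rational function. Fix an ordinary thread $h$ with ends $u=u_h$, $v=v_h$, length $L$, endpoint parameters $a=a_h$, $b=b_h$, and compressed resistance $\tau=\tau_h$. Two distinct edge-nodes on $h$ correspond to edges $v_iv_{i+1}$ and $v_jv_{j+1}$ with $0\le i<j\le L-1$. Put $\ell=j-i\ge1$. By Lemma~\ref{lem:star}, after suppressing degree-two hubs in $\cS(G)$, the thread becomes a resistor of length $\tau$. On it the two edge-nodes sit at distances $i+a$ and $j+a$ from $u$, so the segment between them has resistance exactly $\ell$.

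Dangling loop threads and dangling leaf branches carry no current, so the rest of the network is a two-terminal network between $u$ and $v$. Let $\rho'$ be the resistance of $K$ with the edge $h$ deleted, with $\rho'=\infty$ if $h$ is a bridge of $K$. Series--parallel reduction then gives
\[
 R_{L(G)}(e_i,e_j)=f\bigl(\ell,\;\tau-\ell+\rho'\bigr),
 \qquad f(x,y)=\frac{xy}{x+y}.
\]
For comparison I use the pair $v_i,v_{j+1}$ of $G$, which are at thread distance $\ell+1$. Let $\rho_B'$ be the analogous resistance in $B$ without $h$. By \eqref{eq:ordinary-preservation} this gives
\[
 \Dr(G)\ge R_G(v_i,v_{j+1})=f\bigl(\ell+1,\;Y\bigr),
 \qquad Y:=L-\ell-1+\rho_B'.
\]

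The bridge case is handled first. If $\rho'=\rho_B'=\infty$, the bound $R_{L(G)}=\ell\le L-1<L=R_G(u,v)\le\Dr(G)$ already suffices.

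Otherwise, compare the complements. We have $\tau-\ell=L-\ell-1+a+b$, and $\rho'\le\rho_B'$ because every thread of $K$ is compressed: $\tau_g\le L_g$. Since $f$ is increasing in each argument, it therefore suffices to show
\[
 f(\ell,\,Y+a+b)<f(\ell+1,\,Y),
 \qquad a+b\le\tfrac23 .
\]
Clearing denominators reduces this to a polynomial inequality in $\ell$, $Y$ and $s=a+b$. For $s=1$ it becomes exactly $Y>\ell$, so the case $Y>\ell$ is safe.

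The case $Y\le\ell$ is the main obstacle. Here $s\le 2/3$ must be used genuinely, and probably the compression of $\rho'$ as well. Every other thread $g$ has $\tau_g\le L_g-\tfrac13$, so one expects something like $\rho'\le\rho_B'-c$ for a positive $c$ controlled by $\rho_B'$. I would first try to verify the polynomial inequality directly with $s\le2/3$, using $\ell\ge1$ and $Y\ge\rho_B'>0$.

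If that fails, I would use the symmetric comparison pair $v_{i+1},v_j$ when $\ell\ge2$, or pass to $R_G(u,v)$ and other pairs on the thread. For short segments with a small complement, I would fall back on Lemma~\ref{lem:rho-bounds} to bound $\rho_B'$ from below and close the remaining finite set of cases. Strictness comes from $s<1$, together with the strict Rayleigh decrease on the compressed thread.
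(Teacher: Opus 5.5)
Your setup is sound up to the choice of comparison pair in $G$. The two edge-nodes split the compressed thread into a segment of resistance exactly $\ell$ and a complementary arc $\tau-\ell+\rho'$, and the bridge case is handled correctly. The gap is the choice of $v_i,v_{j+1}$, at separation $\ell+1$. With $f(x,y)=xy/(x+y)$, the inequality you need, $f(\ell,Y+s)<f(\ell+1,Y)$, is equivalent to $Y^2+sY>s\ell(\ell+1)$, and this is false when the complement is short. The failure is real, not an artifact of replacing $\rho'$ by $\rho_B'$. In $G=K_{2,5}$, take the thread $u$--$w$--$v$ and the edge-nodes $uw,wv$. Then $a=b=1/5$ and $\tau=7/5$. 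The other four threads also compress to $7/5$, so $\rho'=7/20$ and
\[
 R_{L(G)}(uw,wv)=f\bigl(1,\tfrac25+\tfrac7{20}\bigr)=\tfrac37>\tfrac25=R_G(u,v)=R_G(v_0,v_2).
\]
Directly, $L(K_{2,5})\cong K_5\square K_2$, and matched vertices are at resistance $3/7$. Your fallback pair $v_{i+1},v_j$ fails in the opposite regime: when the complementary arc is long, the two resistances tend to $\ell$ and $\ell-1$ respectively. So the case $Y\le\ell$ is not a residue that Lemma~\ref{lem:rho-bounds} and finite checks could close; the route itself does not work.

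The missing idea is to compare with two vertices of $G$ at the \emph{same} separation $\ell$, for instance $v_i$ and $v_j$. Their resistance is $f(\ell,L-\ell+\rho_B')$. You have $\tau-\ell+\rho'<L-\ell+\rho_B'$, because $\tau=L-\delta_h<L$ and $\rho'\le\rho_B'$. Since $f(\ell,\cdot)$ is strictly increasing, this gives $R_{L(G)}(e_i,e_j)<R_G(v_i,v_j)\le\Dr(G)$ at once. This is exactly the paper's proof, written there as $d-d^2/(\tau_h+\sigma_K)<d-d^2/(L_h+\sigma_B)$. It needs neither $s\le 2/3$ nor any short-circuit bound; strictness comes directly from $\tau<L$.
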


\begin{proof}
Write the selected edges as $v_mv_{m+1}$ and $v_nv_{n+1}$, with $m<n$, and
put $d=n-m$. Delete the selected thread from $K$ and let $\sigma_K$ be the
resistance between its ends in the remaining network. Set $\sigma_K=\infty$
if the ends are disconnected, and define $\sigma_B$ in the same way.
The parallel rule gives
\[
 R_{L(G)}(e,f)=d-\frac{d^2}{\tau_h+\sigma_K},
\]
with $1/\infty=0$. Since $\tau_h<L_h$ and $\sigma_K\le\sigma_B$, if
$\sigma_B<\infty$ then
\[
 R_{L(G)}(e,f)
 <d-\frac{d^2}{L_h+\sigma_B}
 =R_G(v_m,v_n)\le\Dr(G).
\]
If $\sigma_B=\infty$, every edge of the original thread is a bridge and
$\sigma_K=\infty$. Hence
\[
 R_{L(G)}(e,f)=d\le L_h-1<L_h=R_G(u_h,v_h)\le\Dr(G).
\]
\end{proof}

\subsection{The error terms for the branch core}
Put
\begin{equation}\label{eq:beta-s}
 \beta=\max\left\{\frac23,\max_g\frac{\tau_g}{L_g}\right\},\qquad
 s=1-\beta=\min\left\{\frac13,\min_g\frac{\delta_g}{L_g}\right\},
\end{equation}
where $g$ ranges over all ordinary threads. For distinct selected threads,
deleting the one-port loop interiors and applying point insertion gives
\begin{equation}\label{eq:two-point-exact}
 R_{L(G)}(e,f)=Q_K(\nu_h(t_h)-\nu_k(t_k))+k_h+k_k.
\end{equation}
Apply Proposition~\ref{prop:general-transfer} with $r_i=L_i$ and the unit
path grids. For the edge at index $m_i$, both adjacent grid probabilities
are positive because
\[
 t_i-\frac{m_i}{L_i}=\frac{L_ia_i+m_i\delta_i}{L_i\tau_i}>0,
 \qquad
 \frac{m_i+1}{L_i}-t_i
 =\frac{L_ib_i+(L_i-m_i-1)\delta_i}{L_i\tau_i}>0.
\]
Consequently
\begin{equation}\label{eq:epsilon}
 \varepsilon_i=\left(t_i-\frac{m_i}{L_i}\right)
                 \left(\frac{m_i+1}{L_i}-t_i\right),
\end{equation}
and, with $D=\Dr(\widetilde B)\le\Dr(G)$,
\begin{equation}\label{eq:pre-CB}
 R_{L(G)}(e,f)\le\beta D+C_h+C_k,
\end{equation}
where
\begin{equation}\label{eq:C}
 C_i=(sL_i-\delta_i)t_i(1-t_i)
       +(1-s)(L_i-\rho_i)\varepsilon_i,
 \qquad \rho_i=R_B(u_i,v_i).
\end{equation}
The next section supplies the budget needed to close this comparison.

\section{The full-core budget}

This section proves the estimate that closes \eqref{eq:pre-CB}.  Degree
labels in \eqref{eq:thread-pars} are the degrees in the full graph.  Whenever
the complement of a local configuration is wired to a node $o$, represent
each incidence belonging to a deleted loop thread by an additional unit arm
from its branch endpoint to $o$.  These auxiliary arms restore the full-degree
slots used in the local estimates.  They add conductance and therefore can
only decrease the wired comparison resistance; consequently, a lower bound
proved for the augmented wired network is also a lower bound for the original
unwired resistance used to estimate $D$.

\subsection{A local affine envelope}

For a fixed thread the parameter satisfies
$0\le s\le\min\{1/3,\delta/L\}$, and the following expression extends
affinely to all of $[0,\delta/L]$:
\begin{equation}\label{eq:C-affine}
 C(s)=A+sM_0,\qquad
 A=(L-\rho)\varepsilon-\delta t(1-t),\qquad
 M_0=Lt(1-t)-(L-\rho)\varepsilon.
\end{equation}
Define
\begin{equation}\label{eq:U-H}
 U_s=\frac1{24}+\frac{s}{8},\qquad
 H_s=\frac1{12s}+\frac14\quad(s>0).
\end{equation}
Notice that
\begin{equation}\label{eq:twoU-sH}
   2U_s=sH_s.
\end{equation}

\begin{lemma}[Local envelope]\label{lem:local-envelope}
For every selected ordinary thread,
\begin{equation}\label{eq:LE}
 C_i\le
 \begin{cases}
 U_s,&L_i=1,\\[2mm]
 \displaystyle\frac1{24}+\frac{s}{5}
 =U_s+\frac{3s}{40},&L_i\ge2.
 \end{cases}
\end{equation}
If $s=1/3$, the additional bounds are
\begin{equation}\label{eq:L1-boundary}
 L_i=1:\quad C_i\le\frac{\rho_i}{6}\le\frac D6,
\end{equation}
\begin{equation}\label{eq:L2-boundary}
 L_i=2:\quad C_i\le\frac1{12}<\frac D6.
\end{equation}
For a length-one thread, equality in $C_i\le U_s$ at $0<s<1/3$, or in
$C_i\le\rho_i/6$ at $s=1/3$, holds exactly when
\begin{equation}\label{eq:equality-edge}
 p_i=q_i=3,\qquad\rho_i=\frac12.
\end{equation}
\end{lemma}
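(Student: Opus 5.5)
The plan is to prove the envelope one thread at a time. We use the affine form \eqref{eq:C-affine}, $C(s)=A+sM_0$, regarded as a function of $s\in[0,\delta/L]$ (and also $s\le 1/3$). Everything is explicit once $L,m,a,b$ and $\rho$ are fixed. The quantities $t$ and $\varepsilon$ come from \eqref{eq:t-nu-k} and \eqref{eq:epsilon}.

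First we settle the sign of the coefficients. The coefficient $L-\rho$ of $\varepsilon$ is nonnegative, since $\rho\le L$ by Rayleigh monotonicity. Hence for fixed $s$ the bound $C$ is increasing as $\rho$ decreases, and we may replace $\rho$ by the lower bounds of Lemma~\ref{lem:rho-bounds}. Also $M_0\ge0$, because $\varepsilon\le t(1-t)$ and $L-\rho\le L$. So $C(s)$ is nondecreasing in $s$. It therefore suffices to compare $C(s)$ with the affine target $\frac1{24}+cs$ ($c=1/8$ or $1/5$). Since both sides are affine in $s$, we only need to check the two endpoints $s=0$ and $s=s_{\max}=\min\{1/3,\delta/L\}$.

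\textbf{Case $L=1$.} Here $m=0$, $t=a/\delta'$ with $\tau=a+b$, so $t=a/(a+b)$, and $\varepsilon=t(1-t)$. Then
\[
C=(s-\delta)t(1-t)+(1-s)(1-\rho)t(1-t)=t(1-t)\bigl(1-\delta-\rho+s\rho\bigr)=t(1-t)\bigl(a+b-\rho(1-s)\bigr).
\]
Insert $\rho\ge (p+q-2)/(pq-1)$ from \eqref{eq:adj-bound} and $t(1-t)=ab/(a+b)^2=pq/(p+q)^2$. This reduces the claim $C\le \frac1{24}+\frac s8$ to a rational inequality in $p,q\ge3$ and $s\le\min\{1/3,1-1/p-1/q\}$. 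We check it at $s=0$ and at the upper endpoint. At $s=0$ with $p=q=3$ and $\rho=1/2$, we get $t(1-t)=1/4$ and $C=\frac14(\frac23-\frac12)=\frac1{24}$, which is the equality case. For $p+q>6$ the factor $a+b-\rho$ decreases, and the inequality should hold with room to spare. We verify this by clearing denominators and using monotonicity in $p$ and $q$.

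At $s=1/3$ the bound is $C=t(1-t)(a+b-\frac23\rho)$. Showing $C\le\rho/6$ amounts to $t(1-t)(a+b)\le \rho\,(1/6+\frac23 t(1-t))$. This again follows from \eqref{eq:adj-bound}, with equality exactly when $p=q=3$ and $\rho=1/2$. Finally $\rho\le D$ holds since the endpoints are vertices of $\widetilde B$. Tracking when each inequality used is tight gives the equality statement \eqref{eq:equality-edge}: tightness in $t(1-t)\le 1/4$ forces $p=q$, and tightness in the $\rho$ substitution forces $\rho=1/2$ together with $p=q=3$.

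\textbf{Case $L\ge2$.} Here $\varepsilon$ depends on $m$. Using the two gap formulas preceding \eqref{eq:epsilon}, we write $\varepsilon=(La+m\delta)(Lb+(L-m-1)\delta)/(L\tau)^2$. Substituting $\rho\ge\bar\rho$ from \eqref{eq:thread-rho-bar}, we maximize over $m\in\{0,\dots,L-1\}$. The expression is a concave quadratic in $m$ when combined with $t(1-t)$. We then bound the result uniformly in $L$ by checking the endpoints $s=0$ and $s=\min\{1/3,\delta/L\}$. For $L=2$ and $s=1/3$ we do a direct evaluation: $m\in\{0,1\}$, and $\bar\rho$ with $p,q\ge3$ gives $C\le1/12$. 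Since $D\ge R_G(u,v)$ and the endpoints of a length-two thread are at resistance greater than $1/2$ (bounded below via $\bar\rho$), we obtain $1/12<D/6$.

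\textbf{Main obstacle.} The hard step is the $L\ge2$ uniform estimate, because both $\varepsilon$ and $\bar\rho$ vary with $L$, $m$, $p$ and $q$. The approach we will try first is to show that the worst case is $L=2$ (or possibly $L=3$) with $p=q=3$, using the fact that $L-\bar\rho$ grows only linearly while $\varepsilon=O(1/L^2)$. The remaining small cases are then finished by direct algebra, which would be deferred to the appendix.
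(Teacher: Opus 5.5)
Your skeleton matches the paper's: treat $C$ as affine in $s$, check the two endpoints, substitute the bounds of Lemma~\ref{lem:rho-bounds}, and use polynomial inequalities for $L=1$. However, the step meant to handle $L\ge2$ at the endpoint $s=0$ is wrong. There $C(0)=A=(L-\rho)\varepsilon-\delta t(1-t)$, with $\varepsilon=(a+\delta t)(b+\delta(1-t))/L^2$ and $m=\tau t-a$ affine in $t$. Hence $A''(t)=2\delta\bigl(1-(L-\rho)\delta/L^2\bigr)>0$: the expression is convex, not concave, in $m$, and an interior critical point is its minimum.

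The correct consequence is that the maximum over $m$ occurs at $m=0$ or $m=L-1$. There the identity $L\varepsilon-\delta t(1-t)=ab/\tau$ gives $A=ab/\tau-\rho\varepsilon\le1/(pq(L-1)+p+q)$. This is at most $1/24$ for $L\ge3$. For $L=2$ (with $p\le q$) only $(p,q)\in\{(3,3),(3,4),(3,5)\}$ remain, and these are settled by \eqref{eq:thread-rho-bar}.

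Your heuristic that $L-\bar\rho$ grows linearly while $\varepsilon=O(1/L^2)$ cannot replace this, because it controls only the positive part. It gives $(L-\rho)\varepsilon\le1/(4L)$, which exceeds $1/24$ for $L\le5$; at $L=2$, $p=q=3$, $m=0$, $\rho=1/2$ the positive part equals $9/100$. So the negative term $-\delta t(1-t)$ is indispensable at $s=0$. Concavity and your crude bound are valid only at the other endpoint $s=\delta/L$, where $C=\frac{\tau}{L}(L-\rho)\varepsilon$. Even there, $L=2,3$ with small degrees need explicit evaluation, and at $(L,p,q)=(2,3,3)$ the $\bar\rho$-estimate meets the target exactly.

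Your treatment of \eqref{eq:L2-boundary} also fails. When $s=1/3$, a length-two thread satisfies $\delta/2\ge1/3$, i.e.\ $1/p+1/q\le1/3$, which forces $p,q\ge4$. You never use this constraint. Without it, the claim that ``$\bar\rho$ with $p,q\ge3$ gives $C\le1/12$'' is false: at $p=q=3$, $\rho=1/2$, the affinely extended value at $s=1/3$ is $17/150$. With the constraint, even $\rho\ge0$ suffices: for $m=0$ and $S=a+b\le1/3$,
\[
C\le\frac{a(b+\tfrac13)}{1+S}\le\frac{(S+\tfrac13)^2}{4(1+S)}\le\frac1{12}.
\]
More seriously, your proof of $1/12<D/6$ via $R(u,v)=\rho>1/2$ is wrong. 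In this regime $\bar\rho\le2/5$, and $K_{2,6}$ (where $s=1/3$) has $\rho=1/3$. Use the middle thread vertex instead: $R_{\widetilde B}(v_1,u)=\tfrac12+\tfrac{\rho}{4}>\tfrac12$.

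Two smaller points for $L=1$. First, the upper-endpoint check of $C\le U_s$ is $C(1/3)\le1/12$, which is a separate inequality. It does not follow from $C\le\rho/6$, because $\rho$ can exceed $1/2$. Second, equality at interior $s$ comes from the affine gap $U_s-C$ vanishing at both endpoints. That forces $p=q=3$ through the endpoint polynomials, and then $U_s-C=\frac{1-s}{8}(2\rho-1)$. No inequality $t(1-t)\le1/4$ is involved.
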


The proof is given in Appendix~\ref{app:local-envelope}.

\subsection{A witness thread forces diameter}

\begin{lemma}[Witness diameter]\label{lem:witness-diameter}
Assume $s<1/3$, and choose an ordinary thread $g$ of length $M$ satisfying
$s=\delta_g/M$.  Then
\begin{equation}\label{eq:WD}
   D\ge H_s=\frac1{12s}+\frac14.
\end{equation}
\end{lemma}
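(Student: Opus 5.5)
The plan is to exhibit explicitly two vertices of $\widetilde B$ whose resistance is at least $H_s$. Both terminals should involve the witness thread $g$, because $s=\delta_g/M$ is defined by it. First I extract the size information hidden in $s<1/3$. Since $p,q\ge3$ we have $\delta_g\ge 1/3$, so $M>3\delta_g\ge1$ forces $M\ge2$. The target is then
\[
 D\ge \frac{M}{12\delta_g}+\frac14 .
\]
Because $\delta_g\ge1/3$, it suffices to prove a bound of the form $D\ge \Lambda/4$ up to an explicit correction, where $\Lambda$ is a lengthened thread length defined below. The extremal case $p=q=3$ (so $\delta_g=1/3$) requires exactly $D\ge (M+1)/4$, which shows what the construction must achieve.

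Step 1, the choice of terminals. Let $z=v_m$ be an internal vertex of $g$ near its middle. Let $S$ be the set of vertices of $\widetilde B$ outside the closed thread $g$, so $u$ and $v$ are excluded from $S$. This set contains the neighbours of $u,v$ off the thread. I wire $S$ to one node $o$, and, as described at the start of this section, I add one unit arm to $o$ for each incidence of $u$ or $v$ lying on a deleted loop thread. The augmented wired network is then a circuit of length $M$ along $g$, together with $p-1$ parallel unit arms from $u$ to $o$ and $q-1$ parallel unit arms from $v$ to $o$. Its resistance between $z$ and $o$ is
\[
 \Bigl[\frac1{m+\frac1{p-1}}+\frac1{M-m+\frac1{q-1}}\Bigr]^{-1}.
\]
By the wiring remark preceding Lemma~\ref{lem:local-envelope} and by \eqref{eq:set-contraction}, this quantity is a lower bound for $R_{\widetilde B}(z,S)$. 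I then apply \eqref{eq:variance-bound} with a probability vector $\nu$ on $S$, for instance the harmonic-measure extraction or the uniform vector on a pair of neighbours of $u$ and $v$. This gives some $x\in S$ with
\[
 R_{\widetilde B}(z,x)\ge R(z,S)+\tfrac12\textstyle\sum\nu_x\nu_yR(x,y).
\]
Hence $D$ is at least this quantity.

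Step 2, the algebra. Put $\Lambda=M+\frac1{p-1}+\frac1{q-1}$ and choose $m$ to balance the two branches. The parallel resistance is then $\Lambda/4$ minus a parity loss of order $1/(4\Lambda)$, where the loss comes from $m$ being an integer. It remains to check
\[
 \frac{\Lambda}{4}-(\text{parity loss})+(\text{variance gain})\ \ge\ \frac{M}{12(1-\frac1p-\frac1q)}+\frac14
\]
for all $p,q\ge3$ and all $M>3\delta_g$. For $p=q=3$ this reads $(M+1)/4$ on both sides. As $p$ or $q$ grows, the right side decreases faster in the $M$-coefficient than the left side loses through the terms $1/(p-1)$ and $1/(q-1)$. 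I would verify this as a polynomial inequality in $1/p$, $1/q$ and $M$, treating the boundary $M$ just above $3\delta_g$ separately.

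Main obstacle. I expect the difficulty to be twofold. The first part is the parity loss when $m$ cannot be exactly balanced, notably at $p=q=3$ with $M$ odd. There it must be absorbed by the variance term, using a lower bound on the resistance between two vertices of $S$ on opposite sides, for example via Lemma~\ref{lemma:degree-bound}. The second part is the degenerate situation in which $S$ has no actual vertices of $\widetilde B$ adjacent to $u$ or $v$, because their other incidences all lie on loop threads. There the auxiliary arms are not realized by genuine terminals. In that case I would instead take $x\in\{u,v\}$, or a vertex on the other parallel ordinary threads. I would then redo the series--parallel computation directly, using \eqref{eq:ordinary-preservation} and Lemma~\ref{lem:rho-bounds}(ii) to control the complementary resistance between $u$ and $v$.
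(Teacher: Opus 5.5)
Your generic line (wire $S=V(\widetilde B)\setminus V(P_g)$ with the auxiliary loop arms, take a central path vertex, reduce to a polynomial inequality) is the paper's. The step you flag as the main obstacle, however, fails as proposed. Suppose $p=q=3$, $M$ is odd, and $u,v$ are not joined outside $g$. Then the best wired value over internal vertices is $M(M+2)/(4(M+1))=H_s-1/(4(M+1))$. The variance term in \eqref{eq:variance-bound} cannot recover this, because Lemma~\ref{lemma:degree-bound} only gives $R(x,y)\ge 1/(d_x+1)+1/(d_y+1)$, which can be arbitrarily small.

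Here is a concrete obstruction. Let $M=3$, $g=u\,w_1\,w_2\,v$, and let the two off-thread neighbours of $u$ and the two of $v$ lie in a clique $K_n$. Then $s=1/9$, $H_s=1$, and $R_{\widetilde B}(w_i,S)=15/16$ exactly, while every resistance inside $S$ is at most $2/n$. Take the minimizing $\mu$ in \eqref{eq:set-contraction} and apply the triangle inequality for $\|\cdot\|_{\widetilde B}$. This gives $R_{\widetilde B}(w_i,x)\le(\sqrt{15/16}+\sqrt{2/n})^2<1$ for $i=1,2$ and every $x\in S$, once $n$ is large. So no pair of the form (internal path vertex, vertex of $S$) reaches $H_s$, whatever $\nu$ you use.

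The missing idea is to take an endpoint of $g$ as the second terminal. Point insertion gives $R_{\widetilde B}(u,v_d)=d(M-d)/M+d^2\rho_g/M^2$. Shorting everything off the path gives $\rho_g\ge M/(M+1)$. With $d=(M+1)/2$ this yields exactly $(M+1)/4=H_s$.

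Two smaller points also need repair.

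First, suppose $u$ and $v$ are joined by a length-one thread besides $g$. That edge survives the wiring, so the ends carry $p-2$ and $q-2$ arms plus the unit edge $uv$, and your displayed formula is not the wired resistance there. It is still a lower bound, since in the Dirichlet problem the potentials of $u$ and $v$ lie on the same side of $o$. But for $p=q=3$ and odd $M$ it is again short by $1/(4(M+1))$. The exact value is $\tfrac13+X(T-X)/T$ with $T=M+\tfrac23$ and $X=i+\tfrac13$, and this does clear $H_s$.

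Second, your degenerate case is exactly $S=\varnothing$, because a vertex of $S$ adjacent to $P_g$ must be adjacent to $u$ or $v$. So $\widetilde B$ is either $P_g$, with $D=M$, or the cycle $C_{M+1}$. In the cycle case the fact you need is that all remaining incidences at $u$ and $v$ lie on loop threads, which use two incidences each. Hence $p,q\ge4$ and $\delta_g\ge1/2$. Without this, the bound $\lfloor (M+1)^2/4\rfloor/(M+1)\ge M/(12\delta_g)+1/4$ would fail for even $M$.
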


\begin{proof}
Write $p,q$ for the endpoint degree labels, and let $e\in\{0,1\}$ indicate
whether a direct endpoint edge exists outside $g$.  Since
$\delta_g\ge1/3$ and $s=\delta_g/M<1/3$, necessarily $M\ge2$.
Let $P_g$ be the expanded
path and put $S=V(\widetilde B)\setminus V(P_g)$.  When
$S\ne\varnothing$, identify all vertices of $S$ to one node $o$ and, if
needed, add one unit arm for each incidence belonging to a deleted loop
thread. Existing arms produced by the contraction are retained, not added
again. Thus each exterior incidence is counted once, giving $p-1-e$ and
$q-1-e$ arms at the two ends.  Put
\begin{equation}\label{eq:witness-vars}
 P=p-1-e,\quad Q=q-1-e,\quad
 \Delta=PQ+e(P+Q),\quad
 T=M+\frac{P+Q}{\Delta},\quad c=\frac e\Delta.
\end{equation}
For the path vertex at distance $i$ from the first endpoint, elementary
series--parallel reduction gives
\begin{equation}\label{eq:wired-Ri}
 R_i=c+\frac{X_i(T-X_i)}{T},\qquad
 X_i=i+\frac Q\Delta.
\end{equation}
For a path vertex $z$ and any $x\in S$, identifying $S$ to $o$ gives
$R(z,o)\le R_{\widetilde B}(z,x)\le D$.  Adding the auxiliary arms can only
decrease the first resistance further.  Hence $D\ge R_i$ whenever
$S\ne\varnothing$.

If $S=\varnothing$ and $e=0$, the ordinary core is the path of length
$M$, so $D=M\ge M/(12\delta_g)+1/4=H_s$.  If $S=\varnothing$ and $e=1$, the
ordinary core is the cycle $C_{M+1}$.  All additional incidences at its
ends come from deleted loop threads, so $p,q\ge4$ and $\delta_g\ge1/2$.
Consequently
\[
 D=\frac{\floor{(M+1)^2/4}}{M+1}
 \ge\frac{M}{12\delta_g}+\frac14;
\]
for $M=2$ this is $2/3\ge7/12$, and for $M\ge3$ the inequality follows
directly from the two parities of $M$.  Thus assume below that
$S\ne\varnothing$.

If $e=0$, choose an integer $i$ nearest the midpoint of the path.  The polynomial identities in
Appendix~\ref{app:WD} prove $R_i\ge H_s$, except when $p=q=3$ and $M$ is
odd.  In that exception, shorting off the path gives
$\rho_g\ge M/(M+1)$.  Taking the path vertex at distance
$d=(M+1)/2$ from an endpoint and using point insertion in the original
network gives
\[
 R_{\widetilde B}(u_g,v_d)
 =\frac{d(M-d)}M+\frac{d^2}{M^2}\rho_g
 \ge\frac{M+1}{4}=H_s.
\]
If $e=1$, choose the integer nearest the continuous maximum in
\eqref{eq:wired-Ri}.  Appendix~\ref{app:WD} again proves $R_i\ge H_s$.
\end{proof}

For later use, for a thread of length $L\ge2$, define
\begin{equation}\label{eq:aL}
   A_L=\frac{\floor{L^2/4}}{L}.
\end{equation}

\begin{lemma}\label{lem:cross-height}
Let $i,j$ be distinct ordinary threads of length at least two, and let $g$
be a witness thread of length $M$.
Then
\[
  D \;\ge\; A_{L_i}+A_{L_j},
  \qquad
  A_M \;\ge\; X-\frac{1}{4M}, \quad X=\frac{1}{12s}.
\]
\end{lemma}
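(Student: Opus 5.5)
The plan is to prove the two inequalities separately. Both follow from Rayleigh monotonicity and elementary resistance computations on paths and cycles.

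\emph{The bound $D\ge A_{L_i}+A_{L_j}$.} Work in the expanded ordinary core $\widetilde B$, where $D=\Dr(\widetilde B)$. The threads $i$ and $j$ have lengths $L_i,L_j\ge2$.
\begin{enumerate}
\item Each thread has at least one internal vertex. Since threads partition the edge set, the two paths have disjoint interiors; only their endpoints may coincide.
\item Let $x$ be the internal vertex of thread $i$ at distance $m_i=\floor{L_i/2}$ from $u_i$. Let $y$ be the internal vertex of thread $j$ at distance $m_j=\floor{L_j/2}$ from $u_j$.
\item Identify every vertex of $\widetilde B$ outside the two open interiors to one node $o$. This set contains $u_i,v_i,u_j,v_j$, so it is nonempty. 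By Rayleigh monotonicity, $R_{\widetilde B}(x,y)$ is at least the resistance in the identified network.
\item The identified network consists of two cycles of lengths $L_i$ and $L_j$ glued at the cut vertex $o$. Any other edges become self-loops at $o$ or hang off $o$, and they carry no current between $x$ and $y$.
\item Resistances add across a cut vertex. On a unit cycle of length $L$, a point at distance $m$ from $o$ has resistance $m(L-m)/L$ to $o$. This gives
\[
 D\ge R_{\widetilde B}(x,y)\ge \frac{m_i(L_i-m_i)}{L_i}+\frac{m_j(L_j-m_j)}{L_j}
 =A_{L_i}+A_{L_j},
\]
because $\floor{L/2}\,\lceil L/2\rceil=\floor{L^2/4}$.
\end{enumerate}

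\emph{The bound $A_M\ge X-1/(4M)$.} This is purely arithmetic.
\begin{enumerate}
\item For the witness thread $g$, we have $s=\delta_g/M$, so $X=1/(12s)=M/(12\delta_g)$.
\item Both endpoints of $g$ lie in $H$, so $p,q\ge3$. Hence $\delta_g=1-1/p-1/q\ge1/3$, and therefore $X\le M/4$.
\item Always $\floor{M^2/4}\ge(M^2-1)/4$, with equality for odd $M$. Thus
\[
 A_M\ge\frac{M}{4}-\frac1{4M}\ge X-\frac1{4M}.
\]
\end{enumerate}

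No step here is a genuine obstacle. The only point needing care is the wiring in the first part: it must not short $x$ or $y$ to anything. Choosing the identified set to be the complement of the two open interiors guarantees this. It also makes the argument independent of whether the two threads share endpoints or are joined by further parallel threads.
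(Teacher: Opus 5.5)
Your proof is correct and follows the paper's route. For the first bound you fold each thread into a cycle by identification and combine cut-vertex additivity with the cycle resistance $\floor{L^2/4}/L$. For the second you use $\delta_g\ge 1/3$, so $X\le M/4$, together with $A_M\ge M/4-1/(4M)$.

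The only difference is cosmetic. You wire the whole complement of the two open interiors to a single node $o$, whereas the paper identifies only the two endpoints of each thread. Your choice removes the nonnegative core term $R_{\mathrm{core}}(O_i,O_j)$ that the paper carries and then drops.
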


\begin{proof}
Let $\widetilde B'$ be obtained from $\widetilde B$ by identifying the two endpoints of thread $i$,
and likewise of thread $j$. Write $O_h$ for the resulting node, and choose a middle grid vertex
$w_h$ on each thread, so that $R_{C_h}(w_h,O_h)=\lfloor L_h^2/4\rfloor/L_h=A_{L_h}$, where $C_h$
is the cycle into which $h$ is folded. Since $C_h$ meets the rest of $\widetilde B'$ only at
$O_h$, the one-port rule gives
\[
  R_{\widetilde B'}(w_i,w_j)=A_{L_i}+R_{\mathrm{core}}(O_i,O_j)+A_{L_j}\ge A_{L_i}+A_{L_j},
\]
with $R_{\mathrm{core}}(O_i,O_j)=0$ if $O_i=O_j$. As identifying vertices cannot increase
resistances, $D\ge R_{\widetilde B}(w_i,w_j)\ge R_{\widetilde B'}(w_i,w_j)$, proving the first
inequality.

For the second, the endpoints of $g$ have degrees at least three, so $\delta_g\ge 1/3$ and
$X=M/(12\delta_g)\le M/4$. Since $A_M=M/4$ for even $M$ and $A_M=M/4-1/(4M)$ for odd $M$, the
claim follows; equality occurs only when $M$ is odd and $\delta_g=1/3$, i.e.\ both endpoints are
cubic.
\end{proof}

\subsection{The enhanced witness estimate}

The only case not closed by the preceding coarse bounds is when the witness
thread itself is selected and the other selected thread has length one.

\begin{lemma}[Enhanced witness estimate]\label{lem:enhanced}
Assume $s<1/3$, and let the selected witness thread $g$ have length $M$.
Except when
\begin{equation}\label{eq:cubic-M2}
 M=2,\qquad p=q=3,\qquad e=0,
\end{equation}
there is a resistance lower bound $R_*\le D$ such that
\begin{equation}\label{eq:ENH}
 C_g\le U_s+s(R_*-H_s).
\end{equation}
When $P_g$ denotes the expanded witness path,
$S=V(\widetilde B)\setminus V(P_g)\ne\varnothing$, and
$(M,p,q,e)\ne(3,3,3,0)$, $R_*$ is the wired resistance associated with a
specified vertex $z_*\in V(P_g)$ after the auxiliary arms used in the
comparison have been added.
\end{lemma}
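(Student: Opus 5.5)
We work with the affine form \eqref{eq:C-affine}: $C_g=A_g+sM_{0,g}$, where $s=\delta_g/M$ is attained by the witness thread itself. The key observation is that at the right end of the admissible interval, $s=\delta_g/M$, we have $sM-\delta_g=0$, so the first term of \eqref{eq:C} vanishes. Hence
\[
 C_g=(1-s)(M-\rho_g)\varepsilon_g .
\]
Thus the whole task is to bound a single rounding error by $U_s+s(R_*-H_s)=\frac1{24}+\frac s8-\frac1{12}-\frac s4+sR_*=sR_*-\frac1{24}-\frac s8$, using \eqref{eq:twoU-sH}. Equivalently, we must show
\[
 (1-s)(M-\rho_g)\varepsilon_g+\frac1{24}+\frac s8\le sR_*,
\]
with $R_*\le D$ a wired resistance at a suitable path vertex.

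\textbf{Step 1: bound $\varepsilon_g$ and $M-\rho_g$.} From \eqref{eq:epsilon} and the displayed gaps, $\varepsilon_g\le\frac1{4M^2}$. More precisely,
\[
 \varepsilon_g=\frac{(Ma+m\delta)(Mb+(M-m-1)\delta)}{M^2\tau^2},
\]
which depends on the selected index $m$. Since $e\in\{0,1\}$ and $P=p-1-e$, $Q=q-1-e$, the short-circuit bound \eqref{eq:thread-rho-bar}, sharpened with $e$ as in its proof, gives
\[
 M-\rho_g\le M-\Gamma_e^{-1}.
\]

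\textbf{Step 2: choose the lower bound $R_*$.} In the generic situation $S\neq\varnothing$, I would take $z_*$ to be the path vertex maximizing the wired resistance $R_i$ of \eqref{eq:wired-Ri}, or the one nearest the middle. Then $R_*=R_i$, and the argument of Lemma~\ref{lem:witness-diameter} gives $R_*\le D$. Both sides are explicit rational functions of $(M,P,Q,e)$, with $s=(1-1/p-1/q)/M$. The inequality becomes a polynomial inequality in these parameters. I would clear denominators and verify it by splitting into the cases $e=0,1$ and $M$ even or odd, in the spirit of Appendix~\ref{app:WD}.

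For large $M$ this is comfortable. Indeed $sR_*\approx s\cdot M/4\approx\delta/4\ge1/12$, while the left side is at most $\frac{M}{4M^2}+\frac1{24}+\frac s8$, which tends to $\frac1{24}$.

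\textbf{Step 3: handle the small and degenerate cases.} The tight cases are small $M$ with cubic ends. These need separate treatment:
\begin{itemize}
\item The case $S=\varnothing$ is handled with the explicit path or cycle diameter from Lemma~\ref{lem:witness-diameter}.
\item The case $(M,p,q,e)=(3,3,3,0)$ is handled with the unwired point-insertion bound $R_{\widetilde B}(u_g,v_2)\ge\frac{d(M-d)}{M}+\frac{d^2}{M^2}\rho_g$, using $\rho_g\ge M/(M+1)$. This is why $R_*$ is not a wired resistance there.
\item The excluded case \eqref{eq:cubic-M2}, $M=2$ with cubic ends and $e=0$, is where the inequality genuinely fails. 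It is exempted as stated.
\end{itemize}

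\textbf{Main obstacle.} The hard part is the finite check at $M=2,3$ with degrees $3$ or $4$ and $e=1$. There the margin is small, and the bound on $M-\rho_g$ must use the exact $\Gamma_e$ rather than a crude estimate. I would first try the worst index $m$ (the one maximizing $\varepsilon_g$) together with the exact $\Gamma_e$, and tabulate the remaining cases.
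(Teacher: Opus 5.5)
Your plan is correct and matches the paper's proof essentially step for step. The shared steps are: the vanishing of the first term of $C_g$ at $s=\delta_g/M$; the bounds $\varepsilon_g\le 1/(4M^2)$ and $\rho_g\ge\Gamma_e^{-1}$; the wired middle or maximizing path vertex as $R_*$, with a polynomial check split by $e$ and parity; the separate treatment of $S=\varnothing$; and the unwired pair $(u_g,v_2)$ for $(3,3,3,0)$.

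One correction to where you expect difficulty. The $e=1$ expansion is strictly positive with room to spare. The tight cases are the cubic $e=0$ ones, and $(3,3,3,0)$ closes with exact equality, $C_g\le\frac89\cdot\frac94\cdot\frac1{36}=\frac1{18}=U_s$ with $R_*=H_s=1$, so no slack can be given away there.
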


Its proof, including the choice of $R_*$ and the small exceptional
configurations, appears in Appendix~\ref{app:ENH}.

\begin{lemma}[Cubic exceptional budget]\label{lem:cubic-exception}
If the selected witness thread satisfies \eqref{eq:cubic-M2} and the other
selected thread has length one, then $C_g+C_k<sD$.
\end{lemma}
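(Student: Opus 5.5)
We will pin down all parameters explicitly, since in configuration \eqref{eq:cubic-M2} everything about the witness thread is numerical. With $M=2$ and $p=q=3$ we have $a_g=b_g=1/3$, $\delta_g=1/3$, $\tau_g=5/3$, and hence $s=\delta_g/M=1/6$. By symmetry the selected edge of $g$ has $m=0$, so $t_g=1/5$, $t_g(1-t_g)=4/25$ and, by \eqref{eq:epsilon}, $\varepsilon_g=3/50$. The first term of \eqref{eq:C} vanishes because $sL_g-\delta_g=0$, and we get
\[
 C_g=\tfrac56(2-\rho_g)\tfrac3{50}=\frac{2-\rho_g}{20}.
\]
Lemma~\ref{lem:rho-bounds}(ii) gives $\rho_g\ge\bar\rho=1/2$, so $C_g\le 3/40$. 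For the length-one thread $k$, Lemma~\ref{lem:local-envelope} gives $C_k\le U_{1/6}=1/16$. Equality there forces $p_k=q_k=3$ and $\rho_k=1/2$ by \eqref{eq:equality-edge}.

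The target is $C_g+C_k<D/6$. The crude bounds give only $C_g+C_k\le 11/80$, which would need $D>33/40$. Lemma~\ref{lem:witness-diameter} yields only $D\ge H_{1/6}=3/4$, so the proof must couple the errors to better diameter bounds. We will collect three lower bounds for $D$.

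\emph{First}, point insertion at the middle vertex $w$ of $P_g$ (as in the proof of Lemma~\ref{lem:witness-diameter}) gives
\[
 D\ge R_{\widetilde B}(u_g,w)=\tfrac12+\tfrac{\rho_g}4 .
\]
This rewards large $\rho_g$ exactly where $C_g$ is large.

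\emph{Second}, apply \eqref{eq:variance-bound} with $\nu$ uniform on the ends $x,y$ of $k$:
\[
 D\ge R(w,\{x,y\})+\tfrac{\rho_k}{4},
\]
and bound $R(w,\{x,y\})$ below by wiring the complement of $P_g$. Whether $k$ shares an endpoint with $g$ matters here. If it shares none, $w$ is separated from $\{x,y\}$ by the two cubic ends of $g$ and their exterior arms. If it does share an endpoint, then $R(w,\{x,y\})\ge$ the wired value $3/4$ computed from \eqref{eq:wired-Ri} with $P=Q=2$, $\Delta=4$, $T=3$.

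\emph{Third}, we will write $C_k$ exactly as a function of $p_k,q_k,\rho_k$, using $t_k=a_k/\tau_k$, $\tau_k=a_k+b_k$, and $\varepsilon_k=t_k(1-t_k)$ for $L_k=1$. This replaces $1/16$ by an expression that decreases as $\rho_k$ moves away from $1/2$ or as the degrees grow, and it is at most $\rho_k/6$-type.

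We then split into cases on $(p_k,q_k)$ and on whether $k$ meets $u_g$ or $v_g$, and verify $\frac{2-\rho_g}{20}+C_k(\rho_k)<\frac16\max\{\tfrac12+\tfrac{\rho_g}4,\ \text{second bound}\}$. This is a check of an affine inequality in $\rho_g$ against a piecewise bound, so it suffices to test endpoints. The lower endpoint is given by Lemma~\ref{lem:rho-bounds}, and the upper endpoint is $\rho_g\le 2$, $\rho_k\le1$.

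The main obstacle is the regime $\rho_g$ near $1/2$ with $k$ cubic and $\rho_k$ near $1/2$, where the first two bounds are weakest. There I will first try to show that $\rho_g=1/2$ forces a rigid local structure (equality in the shorting of Lemma~\ref{lem:rho-bounds}), namely a vertex adjacent to all exterior arms. That structure lets us compute the distance from $w$ to the far end of $k$ directly and obtain $D\ge 5/6$. If needed, the strict inequality comes from the strict form of Rayleigh monotonicity: the shorted node carries current in the electrical flow, so the resulting bound is strict.
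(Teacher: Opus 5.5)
The paper's proof does not go this way, and there is a genuine gap in the case where $k$ shares an endpoint with $g$. Your computation $C_g=(2-\rho_g)/20$ is correct. Your case where $k$ avoids both ends of $g$ is also fine. It is essentially the paper's Subcase~A: wire everything off $P_g$, get wired resistance $3/4$ at $w$ from the two arms at each cubic end, then apply \eqref{eq:variance-bound}.

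\textbf{The shared-endpoint bound is false.} You claim $R(w,\{x,y\})\ge 3/4$ when $k$ meets $g$. But if $x=u_g$, then
$R(w,\{x,y\})\le R_{\widetilde B}(w,u_g)=\tfrac12+\tfrac{\rho_g}{4}$, which can equal $2/3$. The wired value from \eqref{eq:wired-Ri} only bounds resistance to vertices off $P_g$. For a concrete failure, take $K_4$ on $\{u,v,x,o\}$ with $uv$ subdivided by $w$, and $k=ux$. This satisfies \eqref{eq:cubic-M2}, and one computes $\rho_g=2/3$, $\rho_k=13/24$ and $D=R(w,x)=7/8$. Your second bound would instead give $D\ge 3/4+13/96=85/96>7/8$.

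\textbf{Your remaining bounds cannot close this case.} Without that bound, your endpoint test treats $\rho_g$ and $\rho_k$ independently, and that is not enough. You also did not use $e=0$. With $P=Q=2$ it improves the shorting bound to $\rho_g\ge[\tfrac12+\tfrac{PQ}{P+Q}]^{-1}=2/3$, so $C_g\le 1/15$ rather than $3/40$. Even with this, the independent minima $\rho_g=2/3$ and $\rho_k=1/2$ give $C_g+C_k=31/240>1/8=sH_s$. At those values your first bound yields only $D\ge 2/3$.

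\textbf{The rigidity fallback does not repair it.} Rigidity at the single value $\rho_g=\bar\rho$ cannot save an affine inequality that already fails on an interval of $\rho_g$ above that value. Moreover, under \eqref{eq:cubic-M2} the value $\rho_g=1/2$ is not attainable at all.

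\textbf{The missing idea is a quantitative joint lower bound} that exploits the degree-two vertex $w$ next to the shared end. The paper does this as follows:
\begin{itemize}
\item It wires all of $\widetilde B$ outside $\{u,v,w,x\}$ into one node, adding arms for deleted loop incidences.
\item It splits on whether $vx\in E(G)$ and computes, in the same reduced network, $(\bar\rho_g,\bar\rho_k)=(22/31,17/31)$ or $(2/3,13/24)$.
\item For cubic $x$, this gives $6\rho_g+25\rho_k>17$, which together with $D\ge 3/4$ is exactly what $C_g+C_k<D/6$ requires.
\item For $d_G(x)\ge 4$, \eqref{eq:adj-bound} gives $C_k<7/120$, and $C_g\le 1/15$ then closes the case.
\end{itemize}
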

The proof is given in Appendix~\ref{app:exceptional}. It compares the
coupled endpoint resistances, which is essential when the two threads meet.

\subsection{The budget lemma}

\begin{lemma}[Full-core budget]\label{lem:CB}
For two distinct selected ordinary threads $h,k$,
\begin{equation}\label{eq:CB}
   C_h+C_k\le sD=(1-\beta)D.
\end{equation}
\end{lemma}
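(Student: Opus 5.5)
The plan is to split on whether $s=1/3$ or $s<1/3$, and then on the lengths of the two selected threads. We combine the local envelope (Lemma~\ref{lem:local-envelope}) with a diameter lower bound: either the witness bound (Lemma~\ref{lem:witness-diameter}), the cross-height bound (Lemma~\ref{lem:cross-height}), or the enhanced witness estimate (Lemma~\ref{lem:enhanced}).

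\emph{Case $s=1/3$.} Here $\beta=2/3$ and we need $C_h+C_k\le D/3$. A length-one thread satisfies $C_i\le\rho_i/6\le D/6$ by \eqref{eq:L1-boundary}, and a length-two thread satisfies $C_i\le 1/12<D/6$ by \eqref{eq:L2-boundary}. For $L_i\ge3$ the envelope gives $C_i\le 1/24+1/15<1/8$. By Lemma~\ref{lem:cross-height}, $D\ge A_{L_i}\ge 2/3$, so this is again below $D/6$. Summing the two bounds $C_i\le D/6$ closes this case.

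\emph{Case $s<1/3$.} Fix a witness thread $g$ with $s=\delta_g/M$. By \eqref{eq:twoU-sH} and \eqref{eq:WD}, two threads each contributing at most $U_s$ give $C_h+C_k\le 2U_s=sH_s\le sD$. So every excess over $U_s$ must be paid for out of the slack $s(D-H_s)$.

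When both selected threads have length one, the bound $C_i\le U_s$ suffices directly. When some selected thread has length at least two, its envelope carries an extra $3s/40$. If both have length at least two, I use $D\ge A_{L_h}+A_{L_j}$ from Lemma~\ref{lem:cross-height}. Combined with $A_M\ge X-1/(4M)$, this should show $D-H_s\ge 3/20$ in the generic range, which pays for both excesses. The small lengths $L=2,3$ and a witness with $M=2$ need a direct numerical check.

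The remaining case is one thread of length one and one of length at least two. If the long thread is not the witness, then $D\ge A_L+{}$(a contribution from the witness path) should again exceed $H_s+3/40$. If it is the witness $g$, I invoke Lemma~\ref{lem:enhanced}: $C_g\le U_s+s(R_*-H_s)$ with $R_*\le D$, so $C_g+C_k\le 2U_s+s(D-H_s)=sD$. The exceptional configuration \eqref{eq:cubic-M2} is handled by Lemma~\ref{lem:cubic-exception}.

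The main obstacle is the mixed situation in which a long non-witness thread coexists with the witness. There the cross-height bound must be checked against $H_s=X+1/4$ using $X\le M/4$, and the small-$L$ cases are tight. I would first try to bound $D$ below by the resistance between a midpoint of the long selected thread and a midpoint of the witness, in the network where each thread is folded into a cycle, and fall back on explicit case enumeration for $L\le3$.
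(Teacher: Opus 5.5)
Your proposal is correct and is essentially the paper's proof. It uses the same split: $s=1/3$ via \eqref{eq:L1-boundary}--\eqref{eq:L2-boundary}; two length-one threads; two long threads; a long non-witness thread plus a length-one thread; and the witness plus a length-one thread via Lemmas~\ref{lem:enhanced} and~\ref{lem:cubic-exception}. Your $L_i\ge3$ subcase at $s=1/3$ is vacuous, since $s=1/3$ forces every ordinary thread to have length at most two.

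The step you flag as the main obstacle is immediate from Lemma~\ref{lem:cross-height}, whose proof is exactly your folding argument, applied to $g$ and a selected thread $h'\ne g$. In the mixed case this gives $D\ge A_{L_{h'}}+A_M\ge\frac12+X-\frac18=X+\frac38>X+\frac{13}{40}$. When both selected threads are long, the same pairing gives $D\ge X+\frac5{12}>X+\frac25$ for $M\ge3$, and $D\ge1>X+\frac25$ for $M=2$. No case enumeration for small $L$ is needed.
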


\begin{proof}
First suppose $s<1/3$.  Choose a witness thread $g$ and put
$X=1/(12s)$.  By Lemma~\ref{lem:witness-diameter}, $D\ge H_s$.

If $L_h=L_k=1$, Lemma~\ref{lem:local-envelope} and
\eqref{eq:twoU-sH} immediately give \eqref{eq:CB}.

Suppose exactly one selected thread, say $h$, has length at least two and
$h\ne g$.  Since
$A_{L_h}\ge1/2$ and $A_M\ge X-1/8$,
\[
 D\ge X+\frac38,
\]
whereas the local envelope requires only
\[
 C_h+C_k\le2U_s+\frac{3s}{40}
 =s\left(X+\frac{13}{40}\right).
\]
Since $3/8>13/40$, \eqref{eq:CB} follows in this case.

If both selected threads have length at least two, the local envelope gives
\begin{equation}\label{eq:genuine-thread-budget}
 C_h+C_k\le2U_s+\frac{3s}{20}=s\left(X+\frac25\right).
\end{equation}
Choose a selected thread $h'\ne g$. Lemma~\ref{lem:cross-height} gives
$D\ge A_M+A_{L_{h'}}\ge A_M+1/2$. If $M=2$, then $D\ge1$ and
$X\le1/2$. If $M\ge3$, then $D\ge X-1/(4M)+1/2\ge X+5/12$.
In both cases $D>X+2/5$, so the budget is strict.

It remains that $g$ is the unique selected thread of length at least two
and the other selected thread has length one.  Outside
\eqref{eq:cubic-M2}, Lemmas~\ref{lem:local-envelope} and~\ref{lem:enhanced} yield
\[
 C_g+C_k\le2U_s+s(R_*-H_s)=sR_*\le sD.
\]

The exceptional case \eqref{eq:cubic-M2} is covered by
Lemma~\ref{lem:cubic-exception}. Finally, if $s=1/3$, every ordinary thread
has length at most two, and the bounds in Lemma~\ref{lem:local-envelope}
give $C_h+C_k\le D/3=sD$.
\end{proof}

\begin{corollary}\label{cor:distinct-ordinary}
If two selected vertices of $L(G)$ lie on distinct ordinary threads, then their
resistance in $L(G)$ is at most $\Dr(G)$.
\end{corollary}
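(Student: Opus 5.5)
The corollary follows by chaining \eqref{eq:pre-CB} with Lemma~\ref{lem:CB}. The plan has three steps: verify the setting in which \eqref{eq:pre-CB} holds, add the budget, and compare $D$ with $\Dr(G)$.

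\emph{Setting.} Let the selected vertices of $L(G)$ be the edges $e=v_{m_h}v_{m_h+1}$ on the ordinary thread $h$ and $f=v_{m_k}v_{m_k+1}$ on a distinct ordinary thread $k$. The reduction of Section~\ref{sec:core} applies.
\begin{itemize}
\item By Lemma~\ref{lem:star}, $R_{L(G)}(e,f)=R_{\cS(G)}(e,f)$.
\item The interiors of loop threads are one-port subnetworks not containing $e$ or $f$, so they can be deleted.
\item Suppressing degree-two hubs turns every ordinary thread $g$ into a resistor of resistance $\tau_g$ in $K$. The edge-nodes $e,f$ become inserted points at relative coordinates $t_h,t_k\in(0,1)$.
\end{itemize}
This gives the exact identity \eqref{eq:two-point-exact}.

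\emph{Transfer and budget.} Since $\tau_g=L_g-\delta_g\le(1-s)L_g=\beta L_g$ for every ordinary thread $g$ by \eqref{eq:beta-s}, the compression hypothesis of Proposition~\ref{prop:general-transfer} holds with $B$, unit path grids and $r_g=L_g$. The grid vertices of $B$ subdivided at unit spacing are exactly $V(\widetilde B)$, so the grid diameter there is $D=\Dr(\widetilde B)$. The rounding variances are \eqref{eq:epsilon}, and both bracketing probabilities are positive as computed before \eqref{eq:epsilon}. This yields \eqref{eq:pre-CB} with the errors \eqref{eq:C}. Lemma~\ref{lem:CB} then gives $C_h+C_k\le sD$, and hence
\[
R_{L(G)}(e,f)\le \beta D+sD=D.
\]

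\emph{Comparison with $G$.} Finally, $D\le\Dr(G)$ by \eqref{eq:ordinary-preservation}, because deleting loop interiors does not change resistances among vertices of $\widetilde B$. This completes the argument.

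All substantive difficulty lies in Lemma~\ref{lem:CB}, which the corollary is allowed to assume. The remaining points to check are bookkeeping:
\begin{itemize}
\item The degree labels $p_i,q_i$ used in $\tau_i$ are full degrees in $G$, including loop incidences. Otherwise the identification of the edge-node positions in $K$ would fail.
\item The case $t_i\in\{0,1\}$ cannot occur, so the point-insertion identity applies without Remark~\ref{rem:point-insertion-endpoints}.
\item If $h$ and $k$ share endpoints, the argument is unchanged, since the proposition only needs the selected points to lie on distinct edges of $K$.
\end{itemize}
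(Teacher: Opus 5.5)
Your proposal is correct and matches the paper's proof, which is exactly the chain of \eqref{eq:pre-CB}, the budget $C_h+C_k\le sD$ from Lemma~\ref{lem:CB}, and $D\le\Dr(G)$ from \eqref{eq:ordinary-preservation}. Your additional bookkeeping (the compression hypothesis $\tau_g\le\beta L_g$, the identification of the unit-grid vertices with $V(\widetilde B)$, and $t_i\in(0,1)$) only makes explicit what the paper sets up in Section~\ref{sec:core} before stating \eqref{eq:pre-CB}.
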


\begin{proof}
Equations \eqref{eq:pre-CB} and \eqref{eq:CB} give
$R_{L(G)}(e,f)\le D\le\Dr(G)$.
\end{proof}

\section{Rooted interpolation and loop threads}\label{sec:loops}

We use the following interpolation lemma to estimate an edge-node from a
fixed root. It is stated for a general probability demand; its proof is
given in Appendix~\ref{app:rank-one}.

\begin{lemma}[Rank-one chain interpolation]\label{lem:rank-one}
Let a distinguished edge $uv$ of resistance $L\in\mathbb Z_{\ge2}$ in a finite connected
network $N$, with $u\ne v$, be shortened to $\tau=L-1+a+b$, where
$0<a,b\le1/3$.  Put
$t=(m+a)/\tau$, where $m\in\{0,\ldots,L-1\}$, and
$\nu(x)=(1-x)\ee_u+x\ee_v$.  If $N^-$ denotes the shortened network, then
for every probability vector $\eta$,
\[
Q_{N^-}(\nu(t)-\eta)+\tau t(1-t)
\le\max_{j\in\{m,m+1\}}\left\{
Q_N(\nu(j/L)-\eta)+L\frac jL\left(1-\frac jL\right)
\right\}.
\]
\end{lemma}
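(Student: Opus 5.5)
The plan is to turn the statement into a comparison of one quantity for three placements of a single point on the distinguished edge, and then to use the rank-one structure of that edge. By Lemma~\ref{lem:point-insertion}, the right-hand terms $Q_N(\nu(j/L)-\eta)+L\frac jL(1-\frac jL)$ are exactly $Q(\ee_z-\eta)$, where $z$ is inserted on $uv$ with segment resistances $(\alpha,\gamma)=(j,L-j)$. By Remark~\ref{rem:point-insertion-endpoints}, the case $j=0$ means $z=u$ and the case $j=L$ means $z=v$. The left-hand side is the same quantity for segments $(m+a,\,n+b)$ with $n=L-m-1$. So I would define
\[
\varphi(\alpha,\gamma)=Q_{N'\cup\{u z,\,z v\}}(\ee_z-\eta),
\]
where $N'$ is $N$ with $uv$ deleted, and aim to prove
\[
\varphi(m+a,\,n+b)\le\max\{\varphi(m,n+1),\,\varphi(m+1,n)\}.
\]
Note that the two segments of the shortened edge are each no longer than the corresponding segments at one of the two grid points. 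However, no single grid point dominates in both coordinates, so Rayleigh monotonicity alone does not suffice.

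The first tool is convexity in conductances. By the dual (potential) form of Thomson's principle,
\[
Q(\zeta)=\max_\phi\Bigl(2\zeta^{\mathsf T}\phi-\sum_e c_e(\phi_x-\phi_y)^2\Bigr).
\]
This is a maximum of affine functions of the conductances, so $\psi(c_1,c_2)=\varphi(1/c_1,1/c_2)$ is convex and nonincreasing in each argument. Hence it suffices to find $\lambda\in[0,1]$ with
\[
\frac1{m+a}\ge\frac{\lambda}{m}+\frac{1-\lambda}{m+1},\qquad \frac1{n+b}\ge\frac{\lambda}{n+1}+\frac{1-\lambda}{n}.
\]
Solving the two constraints gives $\lambda_2\le\lambda\le\lambda_1$ with
\[
\lambda_1=\frac{m(1-a)}{m+a},\qquad \lambda_2=\frac{b(n+1)}{n+b}.
\]
The interval is nonempty iff $m(1-a)(n+b)\ge b(n+1)(m+a)$. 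I expect this to hold for $m,n\ge1$ once $a,b\le1/3$ is used. This would settle all interior grid positions.

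The main obstacle is the boundary cases $m=0$ or $n=0$, where a grid endpoint coincides with $u$ or $v$ and the conductance is infinite. Then $\lambda_1=0<\lambda_2$, so the conductance-convexity argument fails, and one must use the specific form of the dependence on $(\alpha,\gamma)$. Here I would use the rank-one structure. Schur-eliminate $N'$ onto the terminals $\{u,v\}$ together with the support of $\eta$. The edge $uz$–$zv$ is then a series pair of total resistance $\alpha+\gamma$ attached at $u,v$, and $z$ enters only through the single demand at $z$. Writing the flow through the edge as the unknown $j$, the energy becomes
\[
\alpha j^2+\gamma(1-j)^2+\text{(a quadratic in } j \text{ from the rest)},
\]
and minimizing over $j$ gives a closed form
\[
\varphi(\alpha,\gamma)=c_0+\frac{\alpha\gamma+\alpha\kappa_v+\gamma\kappa_u+\kappa'}{\alpha+\gamma+\sigma}.
\]
Here $\sigma=R_{N'}(u,v)$, and $\kappa_u,\kappa_v,\kappa',c_0$ are determined by $\eta$; for a probability vector they satisfy sign and size relations that I expect to follow from the maximum principle (compare Lemma~\ref{lem:set-contraction}). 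Substituting the three segment pairs, the claim for $m=0$ becomes an explicit inequality of the form "a weighted combination of $\varphi(0,L)$ and $\varphi(1,L-1)$ dominates $\varphi(a,L-1+b)$". I would verify it by clearing the common-type denominators and checking the sign, using $0<a,b\le1/3$ and $L\ge2$; the case $n=0$ is symmetric.

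Finally, I would reconcile the two regimes. If the interior check $m(1-a)(n+b)\ge b(n+1)(m+a)$ turns out to fail for small $m$ or $n$ as well, I would instead run the explicit rational-function argument for every $m$. In that approach, the convexity argument serves as a guide to which convex combination of the two grid values to compare against.
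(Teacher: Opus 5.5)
Your point-insertion reformulation is correct. The conductance-convexity argument is also a valid, and softer, route for interior positions than the paper's computation. The nonemptiness condition you left as an expectation does hold:
$m(1-a)(n+b)-b(n+1)(m+a)=mn\delta-ab(m+n+1)\ge(3mn-m-n-1)/9\ge0$ for $m,n\ge1$. That part of the argument does not even use $\eta\ge0$.

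The gap is the end-edge case $m=0$ or $m=L-1$, which you only sketch. This is not a marginal case. For $L=2$ every position is an end position, so as written your argument proves nothing when $L=2$, and for $L\ge3$ it misses both end edges. It is also where the real content of the lemma lies, since, as you observed, the convexity argument breaks down there.

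For that case you state a closed form, say the constants satisfy unspecified ``sign and size relations'' that you ``expect'' from the maximum principle, and say you ``would verify'' the resulting inequality. None of this is carried out. You also do not identify weights for the combination of $\varphi(0,L)$ and $\varphi(1,L-1)$, or show that any choice works.

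The missing input is what the paper's proof supplies:
\begin{itemize}
\item Sherman--Morrison gives the variational identity $Q_{N^-}(\nu(t)-\eta)=\min_x\{Q_N(\nu(x)-\eta)+\lambda^{-1}(t-x)^2\}$.
\item For a probability vector $\eta$, the maximum principle confines $c=(\ee_u-\eta)^{\mathsf T}\cL_N^+(\ee_v-\ee_u)$ to $[-\rho,0]$, and $\sigma=(L-\rho)/L\in[0,1]$.
\item Together these pin the minimizer $r=Lx_*$ in $[m,m+1]$ and give exact formulas for the two grid differences. The claim then reduces to $A+B\le1$.
\item For $L\ge3$ this follows from $1-S\ge\delta/3$. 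For $L=2$ it can fail (e.g.\ $a=b=1/3$ and $\sigma>2/3$), and the paper needs a further argument: $\sigma>\sigma_0$, the bound $r\le r_+(\sigma)$, and $\delta(\delta-3ab)\ge0$. That last step is where $a,b\le1/3$ is really used.
\end{itemize}

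Until you derive the analogous constraints on your $\kappa_u,\kappa_v,\kappa'$ and actually prove the end-edge inequality, including all of $L=2$, the proposal does not establish the lemma. A smaller point: your closed form degenerates when $uv$ is a bridge ($\sigma=\infty$), which needs separate treatment.
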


The surrounding network and its positive resistances are arbitrary. The
lemma controls extraction at any probability distribution, and the
dominating grid point is always one of the two adjacent points. In
particular, taking $\eta=\ee_r$ gives the rooted estimate used below.

\begin{lemma}[Rooted ordinary-core estimate]\label{lem:rooted-core}
Let $J=\widetilde B$ be the expanded ordinary core, with at least one
ordinary thread, let $K_J$ be its compressed star network, and let $r$ be
a branch vertex. Degree labels are the full degrees in $G$.
For a selected edge-node $z$, write $K_J[z]$ for the network obtained by
inserting $z$ at its position on the corresponding compressed thread.
Put
\[
 E_J(r)=\max_{x\in V(J)}R_J(r,x).
\]
For every edge-node $z$ on an ordinary thread of $J$,
\begin{equation}\label{eq:rooted-core}
   R_{K_J[z]}(r,z)\le E_J(r).
\end{equation}
\end{lemma}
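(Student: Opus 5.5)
The plan is to reduce the estimate to the rank-one interpolation, Lemma~\ref{lem:rank-one}, with $\eta=\ee_r$, and then to Rayleigh monotonicity. Let $z$ lie on the ordinary thread $h$ with endpoints $u,v$. Its index is $m$, its relative coordinate is $t=(m+a_h)/\tau_h$, and $L=L_h$. By point insertion (Lemma~\ref{lem:point-insertion}), applied in the compressed network $K_J$,
\[
 R_{K_J[z]}(r,z)=Q_{K_J}\bigl(\nu_h(t)-\ee_r\bigr)+\tau_h t(1-t).
\]
Let $N$ be the network that coincides with $K_J$ except that thread $h$ carries its original resistance $L_h$. Then $K_J$ is exactly $N^-$, the network obtained by shortening the distinguished edge $uv$ of $N$ to $\tau_h=L_h-1+a_h+b_h$. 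The hypotheses of Lemma~\ref{lem:rank-one} hold because $u\ne v$ for an ordinary thread and $a_h,b_h\le 1/3$, since the endpoint degrees are full degrees, which are at least three.

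For $L_h\ge2$, Lemma~\ref{lem:rank-one} with $\eta=\ee_r$ bounds the right side above by
\[
 \max_{j\in\{m,m+1\}}\Bigl\{Q_N\bigl(\nu(j/L)-\ee_r\bigr)+L\tfrac jL\bigl(1-\tfrac jL\bigr)\Bigr\}.
\]
By point insertion again, read backwards, and by Remark~\ref{rem:point-insertion-endpoints} at $j=0,L$, each term equals $R_{N[v_j]}(r,v_j)$. Here $v_j$ is the original grid vertex of thread $h$, restored inside $N$. The network $N[v_j]$ differs from the grid-inserted version of the suppressed network $B$ only in that every other thread $g$ has resistance $\tau_g\le L_g$. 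Rayleigh monotonicity therefore gives
\[
 R_{N[v_j]}(r,v_j)\le R_{B[v_j]}(r,v_j)=R_J(r,v_j)\le E_J(r).
\]
The last equality holds because suppressing degree-two vertices is series reduction, which preserves resistances, and loop threads are absent from $J$. This settles every thread of length at least two.

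The main obstacle is the case $L_h=1$. Lemma~\ref{lem:rank-one} is unavailable there, and the compressed resistor $\tau_h=a_h+b_h\le 2/3$ is strictly shorter than the unit edge. Here $t=a_h/\tau_h$, and $z$ is simply the star center of the two spokes $a_h,b_h$. I would first apply the specialized formula \eqref{eq:degree-two-specialized} in $N$, with $\rho_N=R_N(u,v)$:
\[
 R=(1-t)R_{N}(r,u)+tR_{N}(r,v)+t(1-t)(\tau_h-\rho_N)+\text{(shortening correction)}.
\]
A cleaner route is to use the triangle inequality in $K_J[z]$, which gives $R\le R_{K_J}(r,u)+a_h$ and $R\le R_{K_J}(r,v)+b_h$. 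I would then combine these with $|R_{K_J}(r,u)-R_{K_J}(r,v)|\le\rho_{K}$ through a convex combination with weights proportional to $b_h,a_h$. This yields
\[
 R\le(1-t)R(r,u)+tR(r,v)+\tau_h t(1-t)-t(1-t)\rho_{K}.
\]
Its excess over $\max\{R(r,u),R(r,v)\}$ is at most $t(1-t)(\tau_h-\rho_K)-\min\{t,1-t\}\,|R(r,u)-R(r,v)|$. The delicate subcase is when the two endpoint distances from $r$ are nearly equal. In that subcase I would show that some vertex of $J$ other than $u,v$ lies far enough from $r$ to absorb the excess of at most $\tau_h/4\le1/6$. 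To find it, I would use Lemma~\ref{lemma:degree-bound} and the short-circuit bound \eqref{eq:adj-bound} to certify that $\rho_K$ is close to $\tau_h$, and otherwise use a third neighbor of $u$ or $v$, which exists since both have degree at least three.
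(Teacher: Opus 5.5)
Your treatment of threads with $L_h\ge2$ is correct and is the paper's argument: Lemma~\ref{lem:rank-one} with $\eta=\ee_r$, followed by Rayleigh monotonicity to reach $R_J(r,v_j)$. You apply the interpolation with only $h$ restored to length $L_h$, whereas the paper starts from $J$ and shortens the other threads afterwards; this makes no difference.

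\textbf{The gap is the case $L_h=1$, for which the proposal contains no proof.} A minor point first. Your displayed bound does not follow from the triangle inequalities you cite. The convex combination of $R\le R_{K_J}(r,u)+a_h$ and $R\le R_{K_J}(r,v)+b_h$ with weights $1-t,t$ leaves the error $2a_hb_h/(a_h+b_h)=2\tau_ht(1-t)$. The bound is nevertheless true, because it is an identity: \eqref{eq:degree-two-specialized} applied inside $K_J$.

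The real problem is the next step. You compare with $\max\{R_{K_J}(r,u),R_{K_J}(r,v)\}$, which uses only $R_{K_J}\le R_J$ and so discards the room that is actually available. The excess is $1/(p+q)-\alpha\rho_K$, where $\alpha=t(1-t)=pq/(p+q)^2$, and it is not small in general. For $G=K_4$ and $r\notin\{u,v\}$ it equals $1/12$. It is absorbed by the compression gap $R_J(r,u)-R_{K_J}(r,u)=1/2-1/3$, already present at $u$ itself, not by a third vertex. Your plan to find a far vertex near $u,v$ via Lemma~\ref{lemma:degree-bound}, \eqref{eq:adj-bound}, or a third neighbor is never carried out, and it does not identify this source of slack.

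The missing idea is that the compression is uniform. Every thread is scaled by a factor at most $\beta$, so Thomson's principle gives $Q_{K_J}\le\beta Q_B$, as in \eqref{eq:PSD}. Your expression $(1-t)R_{K_J}(r,u)+tR_{K_J}(r,v)-\alpha\rho_K$ equals $Q_{K_J}(\nu_h(t)-\ee_r)$. It is therefore at most
\[
\beta Q_B(\nu_h(t)-\ee_r)=\beta\bigl[(1-t)R_J(r,u)+tR_J(r,v)-\alpha\rho_h\bigr],\qquad \rho_h=R_J(u,v),
\]
which gives
\[
 R_{K_J[z]}(r,z)\le\beta E_J(r)+C_h,\qquad C_h=\frac1{p+q}-\beta\alpha\rho_h .
\]
It then suffices to prove $C_h\le sE_J(r)$.
\begin{itemize}
\item If $s=1/3$, \eqref{eq:L1-boundary} gives $C_h\le\rho_h/6$, and $\rho_h\le R_J(u,r)+R_J(r,v)\le2E_J(r)$.
\item If $s<1/3$, Lemma~\ref{lem:local-envelope} gives $C_h\le U_s$. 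Also $\Dr(J)\le2E_J(r)$, so Lemma~\ref{lem:witness-diameter} gives $sE_J(r)\ge sH_s/2=U_s$ by \eqref{eq:twoU-sH}.
\end{itemize}
When $\beta$ is close to $1$ the margin $sE_J(r)$ is small. It suffices only because a long witness thread forces $E_J(r)\ge H_s/2$. This is a global fact about $J$, and no local search around the edge $h$ can supply it.
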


\begin{proof}
Use $\beta,s,C_h$ and the witness thread from Sections~3--4.
Suppose first that $z$ lies on a thread $h$ with $L_h\ge2$.  Starting from
$J$, suppress the internal vertices of $h$, shorten it from $L_h$ to
$\tau_h$, and insert $z$. Suppressing and shortening the other threads then
gives $K_J[z]$. Rayleigh monotonicity
and Lemma~\ref{lem:rank-one}, with $\eta=\ee_r$, give
\[
 R_{K_J[z]}(r,z)\le\max_{j=0,\ldots,L_h}R_J(r,v_j)\le E_J(r).
\]

Now let $L_h=1$.  Write $p=p_h$, $q=q_h$.  With
$\alpha=pq/(p+q)^2$ and
$\rho_h=R_J(u_h,v_h)$, endpoint averaging and \eqref{eq:PSD} give
\[
 R_{K_J[z]}(r,z)\le\beta E_J(r)+C_h,\qquad
 C_h=\frac1{p+q}-\beta\alpha\rho_h.
\]
If $\beta=2/3$, \eqref{eq:L1-boundary} gives
$C_h\le\rho_h/6\le E_J(r)/3$.  If $\beta>2/3$, the local envelope and the
witness-diameter estimate of Lemma~\ref{lem:witness-diameter} give $C_h\le U_s$.  Moreover, the triangle
inequality for the resistance metric gives, for
all $x,y\in V(J)$,
\[
 R_J(x,y)\le R_J(x,r)+R_J(r,y)\le2E_J(r),
\]
so $\Dr(J)\le2E_J(r)$.  Consequently
\[
 2E_J(r)\ge\Dr(J)\ge H_s,\qquad
 sE_J(r)\ge\frac{sH_s}{2}=U_s.
\]
Thus $C_h\le sE_J(r)$ in both cases, proving
\eqref{eq:rooted-core}.
\end{proof}

\begin{corollary}[Strict estimate at a pendant root]\label{cor:strict-root}
Under the assumptions of Lemma~\ref{lem:rooted-core}, suppose that a branch
vertex $u$ has degree one in $J$. Then, for every ordinary edge-node $z$,
\[
 R_{K_J[z]}(u,z)<E_J(u).
\]
\end{corollary}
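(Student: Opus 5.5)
The plan is to exploit the fact that a branch vertex $u$ of degree one in $J$ is a cut vertex hanging off the rest of the ordinary core through a single ordinary thread. Such a thread always shortens strictly under compression, and Lemma~\ref{lem:rooted-core} then handles everything beyond it. Let $h_0$ be the unique ordinary thread of $J$ at $u$, with other end $v_0\ne u$ and length $L_0$. Since $d_G(u)\ge3$ and $d_G(v_0)\ge3$, we have $a_{h_0},b_{h_0}\le1/3$. Hence $\delta_{h_0}\ge1/3>0$ and $\tau_{h_0}<L_0$. Let $J'$ be obtained from $J$ by deleting the interior vertices and edges of $h_0$ and the vertex $u$. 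Then $J'$ is connected, contains $v_0$, and meets $h_0$ only at $v_0$.

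\emph{Step 1: $z$ on $h_0$.} Say $z$ corresponds to the edge $v_mv_{m+1}$ of $h_0$. In $K_J[z]$ the vertex $u$ is joined to the rest only through the compressed thread, so $u$ is a one-port endpoint. The series rule then gives $R_{K_J[z]}(u,z)=m+a_{h_0}<m+1\le L_0$. On the other hand, $R_J(u,v_0)=L_0$ because every edge of $h_0$ is a bridge of $J$. Hence $R_{K_J[z]}(u,z)<L_0\le E_J(u)$.

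\emph{Step 2: $z$ on an ordinary thread of $J'$.} Here resistances add across the cut vertex $v_0$, so
\[
R_{K_J[z]}(u,z)=\tau_{h_0}+R_{K_{J'}[z]}(v_0,z).
\]
I would then apply Lemma~\ref{lem:rooted-core} to the core $J'$ with root $v_0$, keeping the full $G$-degree labels, to get $R_{K_{J'}[z]}(v_0,z)\le E_{J'}(v_0)$. Finally, choose $x\in V(J')$ attaining $E_{J'}(v_0)$. Then
\[
R_J(u,x)=L_0+R_{J'}(v_0,x)=L_0+E_{J'}(v_0),
\]
so $E_J(u)\ge L_0+E_{J'}(v_0)>\tau_{h_0}+E_{J'}(v_0)$, which gives the strict inequality. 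If $J'$ contains no ordinary thread, then every ordinary edge-node lies on $h_0$, and Step~1 already covers all cases.

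\emph{Main obstacle.} The delicate point is the legitimacy of invoking Lemma~\ref{lem:rooted-core} for $J'$. The vertex $v_0$ may have small degree in $J'$, and the parameters $\beta,s$ and the witness thread of Sections~3--4 were defined for the whole core. My first approach is to check that the proof of Lemma~\ref{lem:rooted-core} uses only three ingredients: Rayleigh monotonicity together with Lemma~\ref{lem:rank-one} for threads of length at least two; the local envelope \eqref{eq:LE}--\eqref{eq:L1-boundary} for length-one threads; and the witness bound $\Dr\ge H_s$ applied to the core containing the thread. All of these depend only on full-degree labels and on the threads present. Restricting to $J'$ can only increase $s$, by removing $h_0$ from the minimum in \eqref{eq:beta-s}, and the witness is recomputed inside $J'$.

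If this re-verification proves awkward, I would use a fallback that avoids restricting to $J'$. Instead, apply Lemma~\ref{lem:rooted-core} directly in $J$ with root $v_0$, obtaining $R_{K_J[z]}(v_0,z)\le E_J(v_0)$. Then handle separately the case where $E_J(v_0)$ is attained on $h_0$, so that $E_J(v_0)=L_0$. In that case, because $\delta_{h_0}>0$, the slack $L_0-\tau_{h_0}\ge1/3$ must be shown to absorb the difference between $E_J(v_0)$ and $E_J(u)-L_0$.
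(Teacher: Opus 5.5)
Step~1 is the paper's argument. Splitting $R_{K_J[z]}(u,z)=\tau_{h_0}+R_{K_J[z]}(v_0,z)$ at the cut vertex is a sound idea and differs from the paper. The paper keeps the root $u$, proves the non-strict bound $\le E_J(u)$, and then gets strictness separately: from the unit current through the bridge, or from $\Dr(J)\le\max\{E,2E'\}<2E$.

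\textbf{The gap} is in Step~2: Lemma~\ref{lem:rooted-core} does not apply to $J'$ as stated. That lemma, and the witness bound it uses, are proved for the expanded ordinary core $\widetilde B$. There, every incidence missing from the core belongs to a loop thread. In $J'$, the incidence of $h_0$ at $v_0$ is missing but is not a loop incidence.
\begin{itemize}
\item The length-$\ge2$ case and the $\beta=2/3$ length-one case transfer unchanged.
\item The length-one case with $\beta>2/3$ needs $\Dr(J')\ge H_{s'}$, that is, Lemma~\ref{lem:witness-diameter} for $J'$. Its proof uses the loop origin explicitly: for $S=\varnothing$, $e=1$ it infers $p,q\ge4$ and $\delta_g\ge1/2$. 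This fails at $v_0$ when $d_G(v_0)=3$.
\end{itemize}
So your statement that the ingredients depend only on degree labels and threads is an unchecked claim, not a consequence of the paper's results. Your fallback does not close the gap either. When $E_J(v_0)=L_0>E_{J'}(v_0)$ you would need $\tau_{h_0}<E_{J'}(v_0)$. But $L_0-E_{J'}(v_0)$ can be arbitrarily large, while the slack $\delta_{h_0}$ is below $1$.

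\textbf{The repair} is to open up Lemma~\ref{lem:rooted-core} rather than transplant it to $J'$. Run its two estimates in $J$ from the root $v_0$. They only involve grid vertices of the thread $q$ carrying $z$; these lie in $J'$, where $R_J(v_0,\cdot)=R_{J'}(v_0,\cdot)$.
\begin{itemize}
\item If $L_q\ge2$: Lemma~\ref{lem:rank-one} with $\eta=\ee_{v_0}$, plus Rayleigh monotonicity, gives $R_{K_J[z]}(v_0,z)\le\max_jR_J(v_0,v_j)\le E_{J'}(v_0)$.
\item If $L_q=1$: endpoint averaging gives $R_{K_J[z]}(v_0,z)\le\beta E_{J'}(v_0)+C_q$. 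Lemma~\ref{lem:local-envelope} gives $C_q\le U_s\le 1/12<1/3\le\delta_{h_0}$.
\end{itemize}
Adding $\tau_{h_0}=L_0-\delta_{h_0}$ gives $R_{K_J[z]}(u,z)<L_0+E_{J'}(v_0)=E_J(u)$ in both cases.

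With this change your route is complete. It is also slightly more economical than the paper's: the slack $\delta_{h_0}\ge1/3$ of the pendant thread absorbs the length-one error directly, so no witness-diameter bound is needed.
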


\begin{proof}
Let $h$ be the unique ordinary thread incident with $u$, with other end $v$.
Every edge of $h$ is a bridge. Let $J'$ be the subgraph on the $v$-side,
obtained by removing $u$ and the internal vertices of $h$, and put
\[
 E'=\max_{x\in V(J')}R_{J'}(v,x),\qquad E=E_J(u)=L_h+E'.
\]
We allow $J'$ to consist only of $v$, in which case $E'=0$. Cut-vertex
additivity bounds a resistance with an endpoint on $h$ by $E$, while the
triangle inequality bounds every resistance within $J'$ by $2E'$. Hence
\begin{equation}\label{eq:pendant-diameter-gap}
 \Dr(J)\le\max\{E,2E'\}<2E.
\end{equation}

If $z$ lies at index $m$ on $h$, the bridge prefix gives
\[
 R_{K_J[z]}(u,z)=m+a_h<m+1\le L_h\le E.
\]
Suppose next that $z$ lies on a distinct thread $q$ of length at least two.
As in Lemma~\ref{lem:rooted-core}, shortening only $q$ and inserting $z$
gives a resistance at most $E$ by Lemma~\ref{lem:rank-one}. Now shorten
$h$. It carries unit current from $u$ to $z$, so this step strictly decreases
the resistance. All remaining shortenings can only decrease it further.

Finally, suppose that $z$ lies on a distinct thread $q$ of length one.
The averaging estimate in Lemma~\ref{lem:rooted-core} gives
\[
 R_{K_J[z]}(u,z)\le\beta E+C_q.
\]
If $\beta>2/3$, the local envelope, the witness bound, and
\eqref{eq:pendant-diameter-gap} imply
\[
 C_q\le U_s=\frac{sH_s}{2}\le\frac{s\Dr(J)}2<sE.
\]
If $\beta=2/3$, both ends of $q$ lie in $J'$, so
$\rho_q\le2E'$ and \eqref{eq:L1-boundary} gives
\[
 C_q\le\frac{\rho_q}{6}\le\frac{E'}3<\frac E3=sE.
\]
Thus the inequality is strict in every case.
\end{proof}

Let a loop thread of length $L\ge3$ be rooted at $u$, and put
$p=d_G(u)$ and
\begin{equation}\label{eq:loop-tau}
   \tau=L-1+\frac2p.
\end{equation}
It becomes a resistor circle of circumference $\tau$ in the star network.

\begin{lemma}[Loop-thread bounds]\label{lem:loop}
For an edge-node $z$ on the loop,
\[
 R_{\cS(G)}(z,u)\le A_L=\frac{\floor{L^2/4}}L.
\]
Equality in the radial bound holds exactly when $L=3$, $p=3$, and $z$
corresponds to the edge opposite the root. For two distinct edge-nodes
$z_i,z_j$ on the same loop thread,
\[
 R_{\cS(G)}(z_i,z_j)<\Dr(G).
\]
\end{lemma}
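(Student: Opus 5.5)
The loop thread is a one-port subnetwork of $\cS(G)$ attached only at $u$, so the rest of the network carries no current for terminals on the loop. After suppressing the degree-two hubs on the loop, $R_{\cS(G)}(z,u)$ is the resistance in a circle of circumference $\tau=L-1+2/p$ between $u$ and a point at arc distance $x=m+1/p$ from $u$. Here $z$ is the edge $v_mv_{m+1}$ with $0\le m\le L-1$. The one-port rule therefore reduces everything to explicit circle computations.

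For the radial bound, the value is $R=x(\tau-x)/\tau$. By symmetry of $x\mapsto\tau-x$, which sends $m$ to $L-1-m$, it is enough to take $x$ as close to $\tau/2$ as the grid allows. Since $x(\tau-x)/\tau\le\tau/4$, the key step is to compare $\tau/4$ with $A_L$.

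When $p\ge4$ we have $\tau\le L-1/2$. For even $L$, $\tau/4<L/4=A_L$. For odd $L$, the best $m$ is $(L-1)/2$, giving $x=(L-1)/2+1/p$ and $\tau-x=(L-1)/2+1/p$ as well. Then $R=\tau/4=(L-1+2/p)/4$, and we need this to be at most $(L^2-1)/(4L)$, i.e. $2/p\le 1-1/L$. This holds with strict inequality except at $p=3$, $L=3$, where $2/3=2/3$ gives equality.

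When $p=3$ and $L$ is even, the midpoint $\tau/2=(L-1)/2+1/3$ is not attained at any grid value. The nearest values are $x=L/2-2/3$ or $L/2+1/3$, so $x(\tau-x)/\tau$ falls below $\tau/4<L/4$, which is again strict. In general I would simply check that $R<A_L$ with the single equality case (odd $L=3$, $p=3$, $m=1$). This requires the routine inequality $(L-1+2/p)/4\le(L^2-1)/(4L)$, with equality iff $pL-2L=p-... $; written out it becomes $2L\le p(L-1)$, which holds strictly unless $L=p=3$. The edge opposite the root is exactly $m=1$ when $L=3$.

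For two edge-nodes on the same loop, the resistance is again a circle value $d'(\tau-d')/\tau\le\tau/4$, where $d'=n-m$ is an integer with $1\le d'\le L-1$. The comparison target is $\Dr(G)$. Since $u$ and the loop form a cycle $C_L$ attached at the cut vertex $u$, the one-port rule gives $\Dr(G)\ge A_L$ (taking the antipodal vertex and $u$). Moreover, $\tau<L$ because $p\ge3$.

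The main obstacle is strictness when $L$ is odd, where $\tau/4$ could approach $A_L$. Here I would use integrality: the value $d'(\tau-d')/\tau$ with integer $d'$ is at most $g(d')$ for the circle of circumference $L$. Indeed, the function $\tau\mapsto d(\tau-d)/\tau=d-d^2/\tau$ is strictly increasing in $\tau$, so it is strictly below $d(L-d)/L\le A_L$. This gives $R_{\cS(G)}(z_i,z_j)<A_L\le\Dr(G)$.
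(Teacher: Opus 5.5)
Your proposal is correct and follows the paper's proof. You reduce, via the one-port rule, to a resistor circle of circumference $\tau=L-1+2/p$ and bound the radial value by $\tau/4$. This is strict for even $L$ since $\tau<L$; for odd $L$ you compare via $2L\le p(L-1)$, with equality only at $L=p=3$, $m=1$. For two points at integer separation $d$ you use that $d-d^2/\tau$ increases in $\tau$ with $\tau<L$. The only cosmetic difference is that you pass through $d(L-d)/L\le A_L\le\Dr(G)$ rather than citing $R_G(v_m,v_n)\le\Dr(G)$ directly; the garbled fragment ``$pL-2L=p-\ldots$'' should simply be deleted, since the clean form $2L\le p(L-1)$ already settles it.
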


\begin{proof}
Put $a=1/p\le1/3$.  The allowed coordinates of an edge-node from $u$ are
$x=m+a$, and
\[
 R_{\cS(G)}(z,u)=\frac{x(\tau-x)}\tau.
\]
If $L$ is even, this is at most $\tau/4<L/4=A_L$.  If $L$ is odd, the
discrete maximum is $\tau/4$, and
\[
 A_L-\frac\tau4=\frac{L-1-2aL}{4L}\ge0
\]
because $L\ge3$ and $a\le1/3$. Equality requires $L=3$, $a=1/3$,
and $x=\tau/2$, which is the edge opposite the root.

For the second assertion, let
$d\in\{1,\ldots,L-1\}$ be their integer separation.
Their resistance is
\[
 d-\frac{d^2}{\tau}<d-\frac{d^2}{L},
\]
the resistance between two vertices at separation $d$ on the original
$L$-cycle.
\end{proof}

We now cover every pair involving a loop thread.  For two selected points on
distinct loop threads rooted at $u,v$, Lemma~\ref{lem:loop}, Rayleigh
monotonicity between $K$ and $B$, and cut-vertex additivity give
\[
 R_{L(G)}(e,f)
 \le A_{L_e}+R_B(u,v)+A_{L_f}\le\Dr(G);
\]
if $u=v$, omit the middle term.  Choose vertices $x_e$ and $x_f$ on the two
original loop cycles maximizing their resistances to $u$ and $v$.  By
cut-vertex additivity, $R_G(x_e,x_f)=A_{L_e}+R_B(u,v)+A_{L_f}$ (with the
middle term omitted when $u=v$), and hence this quantity is at most
$\Dr(G)$.

If $e$ lies on a loop rooted at $u$ and $f$ lies on an ordinary thread,
use the expanded ordinary core $J=\widetilde B$.
Lemmas~\ref{lem:rooted-core} and~\ref{lem:loop} give
\[
 R_{L(G)}(e,f)
 =R_{\cS(G)}(e,u)+R_{K_J[f]}(u,f)
 \le A_{L_e}+E_J(u).
\]
Choose a vertex $x_e$ on the loop maximizing $R_G(x_e,u)=A_{L_e}$ and a vertex $x_f\in V(J)$ maximizing $R_G(x_f,u)=E_J(u)$.  By cut-vertex additivity, $R_G(x_e,x_f)=A_{L_e}+E_J(u)$, and hence this quantity is at most $\Dr(G)$.

\section{Proof of Theorem~\ref{thm:main}}

\subsection{Proof of the diameter inequality}

\begin{proof}[Proof of the inequality in Theorem~\ref{thm:main}]
Induct on $|E(G)|$.  The case $G=K_2$ is immediate.  If $G$ has a leaf, apply Proposition~\ref{prop:leaf} and the induction hypothesis.  We may therefore assume $\delta(G)\ge2$.

If every vertex has degree two, $G$ is a cycle and $L(G)\cong G$.  Otherwise decompose $G$ into maximal ordinary and loop threads.  Equal selected vertices have resistance zero.  For two distinct vertices of $L(G)$, there are four exhaustive possibilities:
\begin{enumerate}
\item[(i)] they lie on the same ordinary thread, handled by
Proposition~\ref{prop:same-thread};
\item[(ii)] they lie on distinct ordinary threads, handled by
Corollary~\ref{cor:distinct-ordinary};
\item[(iii)] they lie on the same loop thread,
handled by Lemma~\ref{lem:loop};
\item[(iv)] they lie on distinct threads, at least one of which is a loop thread, handled by the two loop comparisons at the end of
Section~\ref{sec:loops}.
\end{enumerate}
Every pair has resistance at most $\Dr(G)$, and taking the maximum proves the diameter inequality.
\end{proof}

\subsection{Equality and strictness}\label{sec:equality}

\begin{lemma}[Strictness of the interior budget]\label{lem:budget-strict}
Under the hypotheses and notation of the full-core budget,
if $s<1/3$, then $C_h+C_k<sD$ for every pair of distinct ordinary threads.
\end{lemma}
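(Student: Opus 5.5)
The plan is to revisit each case of the proof of Lemma~\ref{lem:CB} with $s<1/3$ and check that every inequality either is already strict or can be made strict by an equality analysis. Several cases are strict as written, so they need no further work.

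\emph{Exactly one long thread, not the witness.} The required budget is $s(X+13/40)$, while $D\ge X+3/8$. Since $s>0$, the inequality is strict.

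\emph{Both selected threads of length at least two.} The argument already shows $D>X+2/5$, which is strict.

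\emph{The exceptional cubic configuration \eqref{eq:cubic-M2}.} Lemma~\ref{lem:cubic-exception} already gives the strict inequality.

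\emph{Both threads of length one.} Here the budget is $C_h+C_k\le 2U_s=sH_s\le sD$. Equality would need $C_h=C_k=U_s$. By the equality clause of Lemma~\ref{lem:local-envelope}, this forces $p_i=q_i=3$ and $\rho_i=1/2$ for both threads. It would also need $D=H_s$ in Lemma~\ref{lem:witness-diameter}. I would exclude the second equality. The witness $g$ has length $M\ge2$, and $h,k$ are distinct length-one threads. Lemma~\ref{lem:cross-height}, applied with $g$ and a length-one thread, does not directly apply. Instead I would use Lemma~\ref{lem:set-contraction}, inequality \eqref{eq:half-resistance}. Take $S=\{u_h,v_h\}$ with $R(u_h,v_h)=1/2$ and $z$ a middle vertex of the witness path. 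This gives $D\ge R(z,S)+1/8$. Then I would bound $R(z,S)$ below by the wired computation \eqref{eq:wired-Ri} minus a small correction. The aim is to show $R(z,S)+1/8>H_s$, or at least that equality in the appendix bound $R_i\ge H_s$ cannot coexist with $\rho_h=1/2$. A simpler alternative is to note that $D=H_s$ forces the witness polynomial identities of Appendix~\ref{app:WD} to be tight. That happens only for specific small $(M,p,q)$, and those cases can be checked directly.

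\emph{The witness paired with a length-one thread, outside \eqref{eq:cubic-M2}.} The chain is $C_g+C_k\le 2U_s+s(R_*-H_s)=sR_*\le sD$. Equality requires three things: $C_k=U_s$, so $k$ is a cubic edge with $\rho_k=1/2$; equality in the enhanced estimate \eqref{eq:ENH}; and $R_*=D$. $R_*$ is a wired resistance: the complement $S$ of the witness path is shorted to $o$ and auxiliary arms are added. Wiring and adding arms cannot increase resistance, so $R_*\le R_{\widetilde B}(z_*,x)$ for every $x\in S$. I would show this comparison is strict. The thread $k$ lies in $S$ or touches it with both endpoints cubic. Shorting $S$ then shorts the edge $k$, or a nontrivial part of the exterior, which carries nonzero current in the unwired flow from $z_*$ to a maximizing vertex $x$. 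By the strict form of Rayleigh monotonicity, the wired resistance is strictly smaller, so $R_*<D$. The main point to verify is that some exterior edge really carries current. That holds whenever $S$ contains at least two vertices joined by an edge, which $k$ provides. In the subcase $S=\varnothing$ or $(M,p,q,e)=(3,3,3,0)$, where $R_*$ is defined differently in Appendix~\ref{app:ENH}, I would repeat the argument on that explicit definition or check directly.

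\emph{Main obstacle.} I expect the two equality-exclusion steps to be the hard part: the length-one pair, and the witness-plus-edge case. Both require reopening the appendix computations to see when they are tight. My first attempt would be the variance bound \eqref{eq:half-resistance}, since it turns the forced value $\rho=1/2$ into an extra $1/8$ of diameter.
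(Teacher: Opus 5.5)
The cases you mark as already strict match the paper, and so does your use of \eqref{eq:half-resistance} to turn $\rho=1/2$ into an extra $1/8$ of diameter. The gap is in the witness-plus-edge case, where you propose to exclude equality by proving $R_*<D$. That inequality is false in general.

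Let $g=x-w-y$ have length two with $x,y$ cubic, and let $k=xy$ be its direct endpoint edge, so $e=1$. Let the rest of the core attach only at the common neighbour $z$ of $x$ and $y$. Concretely, take $\widetilde B$ to be $K_4$ minus the edge $wz$, and add a loop thread at $z$ if you want $g$ to be the only witness. Then:
\begin{itemize}
\item $s=1/6$, $H_s=3/4$, $\rho_k=1/2$ and $C_k=U_s=1/16$;
\item $S=\{z\}$ and no arms are added;
\item $R_*=R_{\widetilde B}(w,z)=1=D$.
\end{itemize}
Since $k\not\subseteq S$, your mechanism (``shorting $S$ shorts $k$'') has nothing to act on. Strictness here comes from slack in \eqref{eq:ENH} itself: $C_g=3/40<U_s+s(R_*-H_s)=5/48$, so $C_g+C_k=11/80<1/6=sD$. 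The paper settles this configuration by exactly this direct computation, using $D\ge R(w,z)=1$.

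What you are missing is the structural consequence of $C_k=U_s$. From $p=q=3$ and $R_{\widetilde B}(x,y)=1/2$, the equality case of Lemma~\ref{lemma:degree-bound}(ii) in the simple graph $\widetilde B$ gives $d_{\widetilde B}(x)=d_{\widetilde B}(y)=3$ and $N_{\widetilde B}[x]=N_{\widetilde B}[y]$. Hence no loop threads sit at $x$ or $y$, and such an edge can meet $P_g$ only as the direct endpoint edge of a witness with $M=2$. This is what rules out the other ``touching'' configurations you allude to.

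You also need this fact when both selected threads have length one. If the edge you use for $S=\{u_h,v_h\}$ is that direct endpoint edge, then $R(w,S)=1/2$, and \eqref{eq:half-resistance} only gives $D\ge5/8<H_s$. With the structural fact, one of the two edges lies in $V(\widetilde B)\setminus V(P_g)$. Wiring monotonicity then bounds $R(z_*,\{u_h,v_h\})$ below by the wired value \eqref{eq:wired-Ri}, and the extra $1/8$ yields $D>H_s$. No correction is needed, except in the cubic odd case, where the wired value is $H_s-1/(4(M+1))$ with $M\ge3$.

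Two further remarks.
\begin{itemize}
\item In the case where $k$ is disjoint from $P_g$, your strict-Rayleigh argument becomes a workable alternative to the paper's quantitative bound $D\ge R_*+1/8$. You must take the sink to be an endpoint of $k$, though. Toward any sink outside that edge, the twin symmetry swapping its endpoints makes $k$ current-free. With the sink at an endpoint, removing that vertex leaves $\widetilde B$ connected, so $k$ does carry current.
\item Your ``simpler alternative'' does not work. The bound $R_i\ge H_s$ is tight for every even $M$ when $p=q=3$ and $e=0$: the polynomial $\Omega$ of Appendix~\ref{app:WD} vanishes identically when $P=Q=2$. So there is no finite list of cases to check.
\end{itemize}
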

\begin{proof}
Choose a witness thread $g$ of length $M\ge2$. The proof of
Lemma~\ref{lem:CB} is already strict when both selected threads have length
at least two, or when the only such selected thread differs from $g$.
Lemma~\ref{lem:cubic-exception} covers the cubic length-two exception.

For a length-one edge $xy$ with $C_{xy}=U_s$, Lemma~\ref{lem:local-envelope}
gives $d_G(x)=d_G(y)=3$ and $R_{\widetilde B}(x,y)=1/2$. The degree bound
in the simple graph $\widetilde B$ implies
\[
 \frac12\ge\frac1{d_{\widetilde B}(x)+1}
             +\frac1{d_{\widetilde B}(y)+1}\ge\frac12.
\]
Thus $x,y$ have degree three in $\widetilde B$, have identical closed
neighborhoods, and have no attached loop threads. Such an edge can meet
the expanded witness path $P_g$ only as its direct endpoint edge when
$M=2$: the first internal neighbor of one endpoint must also neighbor the
other, and its degree is two.

\emph{The witness and a length-one edge are selected.}
If $C_k<U_s$, the enhanced estimate gives
$C_g+C_k<sR_*\le sD$. Otherwise write $k=xy$ and use the preceding
observation. If $k$ meets $P_g$, write $g=x-w-y$. The common second
neighbor $z$ of $x,y$ is the only possible attachment of the rest of the
core. The two length-two $w$--$z$ paths are in parallel, and the edge $xy$
carries no current by symmetry. Hence $D\ge R(w,z)=1$, while
\[
 s=\frac16,\qquad C_g+C_k=\frac3{40}+\frac1{16}
                         =\frac{11}{80}<sD.
\]
If $k$ is disjoint from $P_g$, put $S=V(\widetilde B)\setminus V(P_g)$.
Except for $(M,p,q,e)=(3,3,3,0)$, Lemma~\ref{lem:enhanced} supplies a
vertex $z_*$ with $R_{\widetilde B}(z_*,S)\ge R_*$. The half-resistance
pair $x,y\in S$ then gives $D\ge R_*+1/8$ by
\eqref{eq:half-resistance}, whereas $C_g+C_k\le sR_*$. In the excluded
case the central wired resistance is $15/16$, so
$D\ge15/16+1/8>H_s=1$ and $C_g+C_k\le2U_s=sH_s<sD$.

\emph{Both selected edges have length one.}
Only $C_h=C_k=U_s$ needs consideration. At most one of these distinct
edges can be the direct endpoint edge of $P_g$. Thus one, say $xy$, is
disjoint from $P_g$ and has resistance $1/2$. The witness computation gives
a path vertex $z_*$ with wired resistance at least $H_s$, except in the
cubic odd case $p=q=3$, $e=0$, when a middle vertex gives
$H_s-1/(4(M+1))$. Here $M\ge3$. In either case,
\[
 D\ge R_{\widetilde B}(z_*,S)+\frac18>H_s,
 \qquad C_h+C_k=2U_s=sH_s<sD.
\]
\end{proof}

\begin{lemma}[Rigidity for distinct ordinary threads]\label{lem:rigidity}
If $G$ has minimum degree at least two and two edge-nodes on distinct
ordinary threads satisfy $R_{L(G)}(e,f)=\Dr(G)$, then $G\cong K_4$.
\end{lemma}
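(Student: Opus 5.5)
Suppose $R_{L(G)}(e,f)=\Dr(G)$ for edge-nodes $e,f$ on distinct ordinary threads $h,k$. Then every inequality in the chain
\[
 R_{L(G)}(e,f)\le\beta D+C_h+C_k\le\beta D+sD=D\le\Dr(G)
\]
from \eqref{eq:pre-CB} and Lemma~\ref{lem:CB} must be an equality. Lemma~\ref{lem:budget-strict} rules out $s<1/3$, since it makes the middle step strict. Hence $s=1/3$ and $\beta=2/3$. By \eqref{eq:beta-s}, this means $\delta_g/L_g\ge1/3$ for every ordinary thread, so every ordinary thread has length at most two.

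Threads of length two have $C_i\le 1/12<D/6$ by \eqref{eq:L2-boundary}. If such a thread were selected, we could only have $C_h+C_k<D/3$, which is a contradiction. So both selected threads are single edges $x_hy_h$ and $x_ky_k$, and equality must hold in $C_i\le\rho_i/6\le D/6$ for $i=h,k$. The equality clause \eqref{eq:equality-edge} of Lemma~\ref{lem:local-envelope} then forces $p_i=q_i=3$ and $\rho_i=1/2$, and the step $\rho_i/6\le D/6$ forces $D=1/2$. Equality is also needed in $D\le\Dr(G)$ and in the transfer inequality \eqref{eq:general-transfer}. I will mainly use the degree and resistance data.

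Next apply Lemma~\ref{lem:degree-bound}(ii) in the simple graph $\widetilde B$, as in the proof of Lemma~\ref{lem:budget-strict}. Together with $R_{\widetilde B}(x_h,y_h)=1/2$, it gives $d_{\widetilde B}(x_h)=d_{\widetilde B}(y_h)=3$ and $N[x_h]=N[y_h]$, with no loop threads attached at $x_h$ or $y_h$. Let $a,b$ be the two common neighbours of $x_h$ and $y_h$. Now use $D=\Dr(\widetilde B)=1/2$ together with Lemma~\ref{lem:set-contraction}. If $\widetilde B$ contained any vertex $z$ outside $S=\{x_h,y_h\}$, then \eqref{eq:half-resistance} would give $D\ge R(z,S)+1/8>1/2$. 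Since $a$ and $b$ are such vertices, this already contradicts $D=1/2$.

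I expect this contradiction is wrong, because for $K_4$ every pairwise resistance is exactly $1/2$. The slip is that \eqref{eq:half-resistance} bounds $\max\{R(z,x_h),R(z,y_h)\}$, not $D$ plus extra. So the correct deduction is that $R(z,x_h)=R(z,y_h)=1/2$ for all $z\ne x_h,y_h$, and that $R(z,S)\le 3/8$. Applying the same twin argument to $k$, and then to every edge (every pair of vertices is now at resistance at most $1/2$), I will argue as follows. Take $a$ adjacent to $x_h$. We have $R(a,x_h)\le 1/2=D$. Lemma~\ref{lem:degree-bound}(ii) with $d(x_h)=3$ gives $1/2\ge 1/4+1/(d(a)+1)$, so $d(a)\ge3$, with equality iff $N[a]=N[x_h]$. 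If $d(a)>3$, Lemma~\ref{lem:degree-bound}(i) applied to a vertex nonadjacent to $a$, or a direct flow count, should force resistance above $1/2$ somewhere. The clean route is to show every vertex has degree three with $N[a]=N[x_h]=\{x_h,y_h,a,b\}$. This closed set is then a connected component of $\widetilde B$, so $\widetilde B=K_4$. All degrees being three with no loops means $G=\widetilde B\cong K_4$.

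The main obstacle is this last rigidity step: forcing every vertex of $\widetilde B$ into the closed neighbourhood of $x_h$ using only $\Dr(\widetilde B)=1/2$. I would try Lemma~\ref{lem:degree-bound}(i) first. Any vertex $w$ nonadjacent to $x_h$ satisfies $R(w,x_h)\ge 1/3+1/d(w)$. Combining this with $R(w,S)$ from \eqref{eq:variance-bound}, and with the requirement that $w$ connect to $\{a,b\}$ through cubic vertices, should push some resistance above $1/2$. That excludes $w$, and hence gives $N[a]=N[b]=\{x_h,y_h,a,b\}$.
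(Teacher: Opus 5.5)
\textbf{There is a genuine gap in the final rigidity step.}

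Your reduction through $s=1/3$ is exactly the paper's: both selected threads have length one, with $p_i=q_i=3$ and $\rho_h=\rho_k=D=1/2$. You leave the rigidity step at ``should force'', and the plan you describe for it cannot succeed. A single twin edge together with $\Dr(\widetilde B)=1/2$ does not force $K_4$. Here is a counterexample:
\begin{itemize}
\item Take vertices $x,y,a,b$ with $N[x]=N[y]=\{x,y,a,b\}$ and $ab$ an edge, and join each of $a,b$ to every vertex of a $K_{10}$.
\item Lemma~\ref{lemma:degree-bound}(ii) gives $R(x,y)=1/2$.
\item In an $x$--$c$ flow, with $c$ in the clique, the swap $a\leftrightarrow b$ lets you identify $a$ and $b$. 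The merged node is a cut vertex, so $R(x,c)=3/8+1/8=1/2$.
\item No resistance exceeds $1/2$, so the resistance diameter is $1/2$.
\item Yet $d(a)=13$, and $a$ is adjacent to every vertex, so Lemma~\ref{lemma:degree-bound}(i) has nothing to act on. Also $R(x,a)<1/2$.
\end{itemize}
This refutes your intermediate claim that $R(z,x_h)=R(z,y_h)=1/2$ for all $z$; from \eqref{eq:half-resistance} you only get $R(z,\{x_h,y_h\})\le 3/8$. It also refutes the hope that $d(a)>3$ pushes some resistance above $1/2$. Likewise, you cannot ``apply the twin argument to every edge''. That argument needs cubic endpoints and resistance exactly $1/2$, which you know only for $h$ and $k$.

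\textbf{The missing idea is the equality you set aside: equality in the averaging step of Proposition~\ref{prop:general-transfer}.}
\begin{itemize}
\item With $L_i=1$ and $p_i=q_i=3$, one has $t_i=1/2$, $X_i$ uniform on $\{0,1\}$, and $X_i(1-X_i)=0$.
\item So the expectation there is the mean of the four cross-endpoint resistances $R_B(x_h,x_k),\dots,R_B(y_h,y_k)$.
\item Since $\beta>0$, equality throughout forces this mean to equal $D$, so each of the four equals $1/2$.
\item A shared endpoint would contribute $0$, so the four endpoints are distinct cubic vertices with all six mutual resistances equal to $1/2$.
\item By Lemma~\ref{lemma:degree-bound}(i), a nonadjacent pair among them would have resistance at least $2/3$. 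So they induce $K_4$.
\item Each of them already uses all three of its $G$-incidences inside this $K_4$, so connectedness gives $G\cong K_4$.
\end{itemize}

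Alternatively, your twin approach can be rescued only by using $h$ and $k$ jointly, and none of this appears in your proposal:
\begin{itemize}
\item If the two edges are disjoint and $\{x_k,y_k\}\ne\{a,b\}$, identify each common-neighbour pair. This creates cut vertices and gives $R(x_h,x_k)\ge 3/8+3/8>1/2=D$, a contradiction.
\item If they share an endpoint, you get a $K_4$ block whose fourth vertex must still be handled. You would need a cut-vertex argument together with the equality $D=\Dr(G)$ to rule out extra neighbours or loop threads there.
\end{itemize}
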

\begin{proof}
Equality in
\[
 R_{L(G)}(e,f)\le\beta D+C_h+C_k\le D\le\Dr(G)
\]
forces $C_h+C_k=sD$ and $D=\Dr(G)$. Lemma~\ref{lem:budget-strict} gives
$s=1/3$. Every term is at most $D/6$, and length-two terms are strictly
smaller by Lemma~\ref{lem:local-envelope}. Both selected threads therefore
have length one, and equality in their bounds forces
\[
 p_h=q_h=p_k=q_k=3,\qquad \rho_h=\rho_k=D=\frac12.
\]
Their rounding variables are uniform on the two endpoints. Equality in
the averaging step of Proposition~\ref{prop:general-transfer} requires
all four cross-endpoint resistances to equal $D$. The endpoints are
distinct, since a shared endpoint would give resistance zero. All six
mutual resistances of the four endpoints thus equal $1/2$.

Any nonadjacent pair among these vertices would have resistance at least
$1/3+1/3=2/3$, by Lemma~\ref{lemma:degree-bound}. They therefore induce
$K_4$. Each already uses all three of its full-graph incidences, so
connectedness forces $G=K_4$.
\end{proof}

\begin{lemma}[Strictness for graphs with a leaf]\label{lem:leaf-strict}
Suppose the equality characterization in Theorem~\ref{thm:main} is known
for all connected graphs with fewer edges than $G$. If $G$ has a leaf, then
$\Dr(L(G))<\Dr(G)$.
\end{lemma}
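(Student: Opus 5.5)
The plan is to retrace the proof of Proposition~\ref{prop:leaf} and show that each inequality there can be made strict. Let $e=uv$ with $d_G(v)=1$ and $p=d_G(u)$. If $G=K_2$, then $L(G)$ is a single vertex and $\Dr(L(G))=0<1=\Dr(G)$. If $p=1$, then $G=K_2$, so assume $p\ge2$. Since $L(G)$ is finite, it suffices to prove $R_{L(G)}(e',f')<\Dr(G)$ for every pair of distinct edge-nodes $e',f'$. As in Proposition~\ref{prop:leaf}, there are two cases: pairs containing the leaf edge $e$, and pairs avoiding it.

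\emph{Pairs containing $e$.} For $f\ne e$, choose an endpoint $x$ of $f$ with $q=d_G(x)\ge2$, as before. The chain in Proposition~\ref{prop:leaf} bounds $R_{L(G)}(e,f)$ by $\frac1p+R_{\cS(G)}(u,x)+\frac1q$. Since $p,q\ge2$, we have $\frac1p+\frac1q\le1$, and by \eqref{eq:hub-rayleigh} this is at most $1+R_G(u,x)=R_G(v,x)\le\Dr(G)$. For strictness I would check the equality case of $\frac1p+\frac1q\le1$. If $p\ge3$ or $q\ge3$, this step is already strict. If $p=q=2$, I would go back to the triangle inequality. If $x=u$, then $R_{L(G)}(e,f)=\frac1p+\frac1p=1<R_G(v,w)$, where $w$ is the other neighbour of $u$. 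If $x\ne u$, I would show that $R_{\cS(G)}(u,x)<R_G(u,x)$ or that the triangle step is strict. The triangle step $R_{\cS(G)}(u,f)\le R_{\cS(G)}(u,x)+\frac1q$ is strict unless $x$ separates $u$ from $f$ in $\cS(G)$. The remaining subcase is $x$ a cut hub of degree two, with $u$ and $f$ on opposite sides. I would then move to the other endpoint $y$ of $f$. If $d_G(y)\ge2$, the same chain through $y$ gives $\frac1p+R_{\cS(G)}(u,y)+\ldots$ with $R_G(v,y)=R_G(v,x)+1$ (the path $v,u,\dots,x,y$ crosses the bridge through $x$), so this bound is strict. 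If $y$ is a leaf, then $G$ is a path, and the claim follows from a direct computation for paths: $\Dr(L(P_n))=n-2<n-1$.

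\emph{Pairs avoiding $e$.} For $f,g\ne e$, use $R_{L(G)}(f,g)\le R_{L(G-v)}(f,g)\le\Dr(L(G-v))$. The induction hypothesis applied to $G-v$ gives $\Dr(L(G-v))\le\Dr(G-v)$, with equality only if $G-v$ is a cycle or $K_4$; otherwise the bound is strict. The first step is also strict, because the node $e$ lies on nothing carrying current between $f$ and $g$; that is, deleting a pendant node changes no resistance. So the only work is to show $\Dr(G-v)<\Dr(G)$, or to handle the cases $G-v\in\{C_n,K_4\}$ directly. In general, $\Dr(G)\ge R_G(v,x)=1+R_{G-v}(u,x)$ for every $x$. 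For $x\ne u$, this exceeds $R_{G-v}(u,x)$ by $1$. If $\Dr(G-v)$ is attained at a pair $y,z$, I would compare it with $\max_x R_{G-v}(u,x)$. This need not be larger, so this route is not automatically strict. Instead, I would treat $G-v=C_n$ and $G-v=K_4$ explicitly. For a cycle with a pendant edge, $\Dr(G)=1+\floor{n^2/4}/n$, and $L(G)$ is a cycle with a triangle attached; its diameter is computable and strictly smaller. For $K_4$ plus a pendant edge, $\Dr(G)=1+\frac12$, and one checks the line graph numerically. In all other cases, $\Dr(L(G-v))<\Dr(G-v)\le\Dr(G)$.

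The main obstacle is the first case when $p=q=2$ along a bridge, where every step of the chain can be tight simultaneously. The plan there is to exploit the second endpoint of $f$, as described above, and to reduce the degenerate configuration to a path, where a direct formula closes the argument.
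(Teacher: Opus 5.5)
Your architecture matches the paper's: a strict version of the chain from Proposition~\ref{prop:leaf} for pairs containing $e$, and the induction hypothesis on $H=G-v$ for pairs avoiding $e$. However, one step in the first part is false.

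Take the subcase $p=q=2$, $x\ne u$, with $x$ separating $u$ from $f$ in $\cS(G)$. There you assert that if the other endpoint $y$ of $f$ is a leaf, then $G$ is a path. For a counterexample, take a triangle $abc$ with pendant paths $a$--$u$--$v$ and $b$--$x$--$y$. Here $d_G(u)=d_G(x)=2$, $x$ separates $u$ from $f=xy$, $y$ is a leaf, and $G$ is not a path. So the fallback $\Dr(L(P_n))=n-2$ does not apply.

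The repair is the identity you already wrote. Since $f$ is a bridge with $v$ on the side of $x$, you get $R_{L(G)}(e,f)\le R_G(v,x)<R_G(v,x)+1=R_G(v,y)\le\Dr(G)$, with no condition on $d_G(y)$. The chain through $y$ and the path computation are then unnecessary. This is exactly how the paper closes its bridge case with $p=q=2$. (The paper splits first into two cases: if $f$ is not a bridge, $R_{\cS(G)}(x,f)<1/q$ by a parallel route; otherwise $f$ is a bridge.)

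Separately, your remark that the first step for pairs avoiding $e$ is ``also strict, \ldots\ deleting a pendant node changes no resistance'' is wrong. The node $e$ is pendant in $L(G)$ only when $p=2$, and then that step is an equality. For $p\ge3$, $e$ lies in the clique formed by the edges at $u$, and deleting it can strictly increase resistances. Only the non-strict Rayleigh bound holds, and that is all you actually use.

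For $H\in\{C_n,K_4\}$ you leave the line-graph diameters as computations still to be done (``computable'', ``one checks numerically''). None is needed. Both graphs are vertex-transitive, so $\max_x R_H(u,x)=\Dr(H)$, and therefore $\Dr(G)\ge 1+\Dr(H)$. Hence every pair avoiding $e$ satisfies $R_{L(G)}(f,g)\le\Dr(L(H))\le\Dr(H)<\Dr(G)$.

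Your sentence ``this need not be larger'' is true in general, but in exactly the two cases that remain the inequality is automatic. The paper uses this observation, inside a contradiction argument, to finish. With these corrections your proof coincides with the paper's.
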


\begin{proof}
The case $G=K_2$ is immediate. Otherwise, let $e=uv$ be a leaf edge, with
$d_G(v)=1$, put $p=d_G(u)\ge2$, and set $H=G-v$. Resistances on $V(H)$ are
unchanged by deleting $v$.

First consider a pair $e,f$ with $f\ne e$. If $f$ is not a bridge, choose
an endpoint $x$ of $f$ and put $q=d_G(x)\ge2$. In $\cS(G)$ there is an
$x$--$f$ route avoiding the direct spoke, so the parallel rule gives
$R_{\cS(G)}(x,f)<1/q$. The chain in Proposition~\ref{prop:leaf} is therefore
strict. If $f=xy$ is a bridge, choose $x$ on the $u$-side of $G-f$.
Again $q=d_G(x)\ge2$, and
\[
 R_{L(G)}(e,f)\le R_G(u,x)+\frac1p+\frac1q.
\]
This is less than $R_G(v,x)$ unless $p=q=2$. In that last case,
\[
 R_{L(G)}(e,f)\le R_G(v,x)<R_G(v,y)=R_G(v,x)+1\le\Dr(G).
\]
Thus every pair involving $e$ has resistance strictly less than $\Dr(G)$.

Suppose, for a contradiction, that $\Dr(L(G))=\Dr(G)$. A maximizing pair
then avoids $e$, so deleting $e$ from $L(G)$ and using the already proved
diameter inequality gives
\[
 \Dr(G)\le\Dr(L(H))\le\Dr(H)\le\Dr(G).
\]
All terms are equal. By the induction hypothesis, $H$ is a cycle or $K_4$.
Both are vertex-transitive, so the resistance eccentricity of the attachment
vertex $u$ equals $\Dr(H)$. Adding the leaf gives
\[
 \Dr(G)\ge1+\max_{x\in V(H)}R_H(u,x)=1+\Dr(H)>\Dr(G),
\]
a contradiction.
\end{proof}

\begin{proof}[Completion of the proof of Theorem~\ref{thm:main}]
For a cycle, $L(C_n)\cong C_n$. For $K_4$, the two nonterminal vertices
have equal potential, so the edge between them carries no current. The
direct terminal edge is then in parallel with two length-two paths, giving
resistance $1/2$. Its line graph is
$K_{2,2,2}$. Two vertices in the same part have resistance $1/2$, while
vertices in different parts have resistance $5/12$. To check the latter
value, impose terminal potentials $1$ and $0$. The other vertices in those
two parts have potentials $2/5$ and $3/5$, and the two remaining vertices
have potential $1/2$. The source current is $12/5$.
Thus both cycles and $K_4$ give equality.

For necessity, induct on $|E(G)|$. The graph $K_2$ is strict. Suppose
$\Dr(L(G))=\Dr(G)$, and assume the equality characterization for smaller
graphs. Lemma~\ref{lem:leaf-strict} excludes leaves. If every vertex has
degree two, $G$ is a cycle. Otherwise, choose distinct edge-nodes $e,f$ with
$R_{L(G)}(e,f)=\Dr(G)$.

Proposition~\ref{prop:same-thread} and Lemma~\ref{lem:loop} exclude pairs on
the same ordinary or loop thread. We next exclude the two remaining
configurations involving loops.

\emph{Two distinct loop threads.} Let their roots be $u,v$. Equality in a
radial bound is possible only for a triangular loop at a cubic root, by
Lemma~\ref{lem:loop}. If either radial bound is strict, the desired
strictness follows at once. If $u=v$, two loops use at least four incidences at
their common root, so at least one radial bound is strict. If $u\ne v$ and
both radial bounds are equalities, each cubic root has a unique ordinary
thread. The ordinary thread at $u$ is a bridge and carries unit current in
the $u$--$v$ problem. Shortening it strictly decreases the core resistance;
the other shortenings cannot increase it. In either case,
\[
 R_{L(G)}(e,f)<A_{L_e}+R_B(u,v)+A_{L_f}\le\Dr(G),
\]
where the middle term is omitted when $u=v$.

\emph{A loop thread and an ordinary thread.} If the radial bound on the
loop is strict, the comparison at the end of Section~\ref{sec:loops} is
strict. Otherwise the loop is triangular at a cubic root $u$. It uses two
incidences, so $u$ has degree one in the expanded ordinary core $J$.
Corollary~\ref{cor:strict-root} now gives
\[
 \begin{aligned}
 R_{L(G)}(e,f)
 &=R_{\cS(G)}(e,u)+R_{K_J[f]}(u,f)\\
 &<A_{L_e}+E_J(u)\le\Dr(G).
 \end{aligned}
\]

Only two distinct ordinary threads remain. Lemma~\ref{lem:rigidity} then
gives $G\cong K_4$, completing the induction.
\end{proof}

\section{Concluding remarks}

Theorem~\ref{thm:main} gives
$\Dr(G)\ge\Dr(L(G))\ge\cdots\ge\Dr(L^m(G))$ whenever the intermediate
graphs have at least one edge. The equality classification is exact, but
there is no positive uniform additive or multiplicative gap outside its
two families.

For the additive assertion, let $A$ be the adjacency matrix of $L(K_n)$,
$n\ge4$. The unsigned incidence identities
$M^{\mathsf T}M=A+2I$ and $MM^{\mathsf T}=(n-2)I+J$ give nonzero
Laplacian eigenvalues $n$ and $2(n-1)$. Consequently
\[
 \cL_{L(K_n)}^+=\frac{n+2}{2n(n-1)}I
                    +\frac1{2n(n-1)}A+\gamma J.
\]
Adjacent and nonadjacent pairs have resistances $(n+1)/(n(n-1))$ and
$(n+2)/(n(n-1))$, respectively. Thus, for $n\ge5$,
\begin{equation}\label{eq:complete-line-diameter}
 \Dr(L(K_n))=\frac{n+2}{n(n-1)},\qquad
 \Dr(K_n)-\Dr(L(K_n))=\frac{n-4}{n(n-1)}\longrightarrow0.
\end{equation}

For the multiplicative assertion, attach a path of length $t\ge1$ to a
triangle at one endpoint, obtaining $G_t$. Cut-vertex additivity gives
$\Dr(G_t)=t+2/3$. Its line graph consists of $K_4$ minus one edge, with a
path of length $t-1$ attached at one of the nonadjacent vertices. That
four-vertex core has resistance diameter one, attained by the nonadjacent
pair, so $\Dr(L(G_t))=t$. Hence
$\Dr(L(G_t))/\Dr(G_t)=t/(t+2/3)\to1$.

The comparison proposition and the interpolation lemma apply to weighted
networks without assumptions on the surrounding topology. The full-core
budget proved here uses the unit-edge model, simplicity, and full endpoint
degrees. A weighted analogue therefore calls for a corresponding budget;
the transfer formula specifies exactly the error terms that such a result
would need to control.

\section*{Data availability}
The article is theoretical. The polynomial identities and reduced-network
calculations used in the proofs are included in the appendices.

\section*{Acknowledgement}
The research of Xiang-Feng Pan is supported by the Excellent University Research and Innovation Team in Anhui Province (No. 2024AH010002) and the University Natural Science Research Project of Anhui Province (No. 2025AHGXZK30908).
The research of Zhen-Mu Hong is supported by Natural Science Foundation of China (No. 12371338) and Outstanding Youth Scientific Research Projects of Anhui Provincial Department of Education (No. 2022AH030073).

\appendix

\section{Proof of the local envelope}\label{app:local-envelope}

\begin{proof}
First let $L=1$.  Put $S=p+q$, $P=pq$, and
$\alpha=P/S^2$.  Here
\[
 C(s)=\frac1S-(1-s)\alpha\rho.
\]
It is enough to check $s=0$ and $s=1/3$.  Substitute
\eqref{eq:adj-bound} and multiply by the positive denominators.  The two
required numerators are
\[
 \Phi_0=P(S^2-48)-S^2+24S,\qquad
 \Phi_1=P(S^2-4S-16)-S^2+12S.
\]
Writing $p=3+x$, $q=3+y$, and $u=x+y$, they become
\begin{align*}
\Phi_0={}&84u+44(x^2+y^2)+3(x^3+y^3)+76xy
          +21xyu+xyu^2,\\
\Phi_1={}&60u+32(x^2+y^2)+3(x^3+y^3)+60xy
          +17xyu+xyu^2.
\end{align*}
Both are nonnegative.

Now assume $L\ge2$.  Since both sides of the desired estimate are affine in
$s$, it suffices to check $s=0$ and $s=\delta/L$.  From
$m=(L-\delta)t-a$ one obtains
\begin{equation}\label{eq:epsilon-factor}
 \varepsilon=\frac{(a+\delta t)(b+\delta(1-t))}{L^2}.
\end{equation}
Moreover,
\[
 A''(t)=2\delta\left(1-\frac{(L-\rho)\delta}{L^2}\right)>0.
\]
Consequently $A$ in \eqref{eq:C-affine} is convex as a function of $t$, so
its maximum over $m=0,\ldots,L-1$ occurs at $m=0$ or $m=L-1$.  At either
end, with the orientation chosen appropriately,
\begin{equation}\label{eq:A-end-identity}
 L\varepsilon-\delta t(1-t)=\frac{ab}{\tau},\qquad
 A=\frac{ab}{\tau}-\rho\varepsilon.
\end{equation}
For $L\ge3$ this is at most
\[
 \frac1{pq(L-1)+p+q}\le\frac1{24}.
\]
For $L=2$, the same argument is immediate unless, after assuming $p\le q$,
$(p,q)$ is $(3,3),(3,4)$, or $(3,5)$.  Substitution of
\eqref{eq:thread-rho-bar} gives respectively
\[
 \max A\le\frac{11}{300},\qquad
 \frac{10}{361},\qquad
 \frac{35}{1587},
\]
all smaller than $1/24$.

At the other endpoint of the affine interval,
\begin{equation}\label{eq:E-end}
 E:=C(\delta/L)=\frac{\tau}{L}(L-\rho)\varepsilon.
\end{equation}
For $L\ge4$, use $\varepsilon\le1/(4L^2)$ and $\delta\ge1/3$.
After multiplication by $120L^2$, the required difference is at least
\[
 5L^2-30L+\delta(24L+30)
 \ge5L^2-22L+10\ge0.
\]
For $L=3$, the same rough estimate works when $\delta\ge15/34$; the only
remaining degree pairs are shown in the first two rows below.  For $L=2$ it
works when $\delta\ge20/39$; the remaining pairs are the last five rows.
The third column bounds the maximum over the grid positions, using
\eqref{eq:thread-rho-bar} in \eqref{eq:E-end}.
\[
\begin{array}{c@{\qquad}c@{\qquad}c@{\qquad}c}
\toprule
L&(p,q)&\text{upper bound for }E&1/24+\delta/(5L)\\
\midrule
3&(3,3)&2/33&23/360\\
3&(3,4)&595/10044&5/72\\
2&(3,3)&3/40&3/40\\
2&(3,4)&55/741&1/12\\
2&(3,5)&91/1242&53/600\\
2&(3,6)&5/69&11/120\\
2&(4,4)&1/15&11/120\\
\bottomrule
\end{array}
\]
This proves both affine endpoint bounds and hence \eqref{eq:LE}.
For $0<s<1/3$, the gap $U_s-C_i$ for $L_i=1$ is affine and
nonnegative at both endpoints $s=0,1/3$. Equality at an interior value
forces both endpoint gaps to vanish. The displayed polynomials then give
$p_i=q_i=3$, and
\[
 U_s-C_i=\frac{1-s}{8}(2\rho_i-1).
\]
Thus equality holds exactly when $\rho_i=1/2$.

Finally consider $s=1/3$, equivalently $\beta=2/3$.  The condition
$\delta_i/L_i\ge1/3$ forces $L_i\le2$.  For $L=1$, the adjacent lower bound
gives
\begin{equation}\label{eq:L1-boundary-proof}
   C_i\le\frac{\rho_i}{6}\le\frac D6.
\end{equation}
Indeed the first inequality reduces to
\[
(p+q-2)(p^2+6pq+q^2)\ge6(p+q)(pq-1).
\]
With $p=3+x,q=3+y,u=x+y$, the difference is
$12u+10(x^2+y^2)+u(x^2+y^2)\ge0$.

For $L=2$, write $a=1/p$, $b=1/q$, and $S=a+b$.  The condition
$\tau/L\le2/3$ is $S\le1/3$.  For $m=0$,
$t=a/(1+S)$ and $\varepsilon=t(1/2-t)$, so
\begin{align}
 C&\le(S-1/3)t(1-t)+\frac43t(1/2-t)\notag\\
  &=\frac{a(b+1/3)}{1+S}
  \le\frac{(S+1/3)^2}{4(1+S)}\le\frac1{12}.
 \label{eq:L2-boundary-proof}
\end{align}
The last step is equivalent to
$(3S-1)(S+2/3)\le0$; $m=1$ is symmetric.  A middle thread vertex satisfies
\[
 R_{\widetilde B}(v_1,u)=\frac12+\frac{\rho_i}{4}\ge\frac12,
\]
and $\rho_i>0$ makes $1/12<D/6$. For $L_i=1$, equality in the
degree polynomial requires $p_i=q_i=3$; equality in the resistance
substitution then requires $\rho_i=1/2$.
\end{proof}

\section{Polynomial verification for the witness diameter}\label{app:WD}

We verify the sign assertions in Lemma~\ref{lem:witness-diameter}.  All
auxiliary variables $x,y,z,w$
introduced below are nonnegative integers.

First let $e=0$.  Put
\[
A=(P+1)(Q+1),\qquad d_0=PQ-1,\qquad V=P+Q+MPQ.
\]
Then $\delta=d_0/A$.  If $N$ is the numerator of the selected wired
resistance $R_i=N/V$, the inequality $R_i\ge H_s$ is equivalent to
\begin{equation}\label{eq:Omega-WD}
 \Omega:=12d_0N-MAV-3d_0V\ge0.
\end{equation}
For $M=2n$, put $P=2+x,Q=2+y,n=1+z$ and
$N=(nP+1)(nQ+1)$.  Direct expansion gives
\begin{align*}
\Omega={}&45(x+y)+102z(x+y)+12(x^2+y^2)+54xy
 +48z^2(x+y)+42z(x^2+y^2)+146xyz\\
&+11xy(x+y)+24z^2(x^2+y^2)+80xyz^2
 +42xyz(x+y)+2x^2y^2\\
&+28xyz^2(x+y)+10x^2y^2z+8x^2y^2z^2\ge0.
\end{align*}
For $M=2n+1$, assume $P\le Q$, use
$N=(nP+1)((n+1)Q+1)$, and again put
$P=2+x,Q=2+y,n=1+z$.  Then
\begin{align*}
\Omega={}&-36+48x+84y+150z(x+y)+15x^2+39y^2+102xy
 +48z^2(x+y)\\
&+66z(x^2+y^2)+226xyz+21x^2y+33xy^2
 +24z^2(x^2+y^2)+80xyz^2\\
&+70xyz(x+y)+6x^2y^2+28xyz^2(x+y)
 +18x^2y^2z+8x^2y^2z^2.
\end{align*}
All nonconstant coefficients are nonnegative.  If $(x,y)\ne(0,0)$, the
smallest allowed contribution is already $84+39>36$; $(x,y)=(0,0)$ is the
symmetric cubic odd case treated separately in the proof.

Now let $e=1$.  Put
\[
\begin{gathered}
 P=1+x,\quad Q=1+y,\quad M=2+z,\\
 \Delta=PQ+P+Q,\quad A=(P+2)(Q+2),\quad
 V=P+Q+M\Delta.
\end{gathered}
\]
Rounding to the nearest integer in \eqref{eq:wired-Ri} gives
\[
 \max_iR_i\ge\frac{V^2+4V-\Delta^2}{4\Delta V}.
\]
Indeed, the continuous maximizer corresponds to
\[
 i_0=\frac M2+\frac{P-Q}{2\Delta}.
\]
Since $|P-Q|\le\Delta$ and $M\ge2$, one has $i_0\in[0,M]$; hence an
admissible integer lies within distance $1/2$ of $i_0$.  Moreover,
\[
 X(T-X)=\frac{T^2}{4}-\left(X-\frac T2\right)^2,
\]
so the nearest admissible integer makes the last square at most $1/4$;
substitution in \eqref{eq:wired-Ri} gives the displayed bound.
The desired inequality is
\begin{equation}\label{eq:Omega1-WD}
 \Omega_1:=3(V^2+4V-\Delta^2)-MAV-3\Delta V\ge0.
\end{equation}
For $z\ge1$, write $z=1+w$.  Expansion gives
\begin{align*}
\Omega_1={}&72+27w+93(x+y)+69w(x+y)+30(x^2+y^2)+102xy
 +9w^2(x+y)\\
&+33w(x^2+y^2)+106xyw+27xy(x+y)+6w^2(x^2+y^2)
 +18xyw^2\\
&+35xyw(x+y)+6x^2y^2+7xyw^2(x+y)
 +9x^2y^2w+2x^2y^2w^2>0.
\end{align*}
For $z=0$, assume $x\le y$.  The condition $s<1/3$ leaves only
$x=0$; or $x=1,y\le8$; or $x=2,y\le4$.  In these three cases
\eqref{eq:Omega1-WD} is respectively
\[
45+33y+3y^2,\qquad 81+46y+y^2,\qquad
123+57y-3y^2,
\]
which is positive on the stated ranges.

\section{Polynomial verification for the enhanced estimate}\label{app:ENH}

\subsection{Reduction to the polynomial estimates}
\begin{proof}
Because $s=\delta_g/M$,
\[
 C_g=(1-s)(M-\rho_g)\varepsilon_g.
\]
Let $P_g$ be the expanded witness path and put
$S=V(\widetilde B)\setminus V(P_g)$.  If $S=\varnothing$, take $R_*=D$.
When $e=0$, $D=M$ and
$D-H_s\ge3M/4-1/4\ge3/40$.  When $e=1$, the ordinary core is
$C_{M+1}$, while $p,q\ge4$ and $\delta_g\ge1/2$.  Hence
\[
D=A_{M+1}\ge H_s+\frac3{40}:
\]
for $M=2$ the difference $D-H_s$ is at least $1/12$, for $M=3$ it is at
least $1/4$,
and for $M\ge4$ use $A_{M+1}\ge M/4\ge M/6+13/40$.
The local envelope now gives
\[
C_g\le U_s+\frac{3s}{40}
\le U_s+s(D-H_s),
\]
so assume below that $S\ne\varnothing$.

Use $\varepsilon_g\le1/(4M^2)$ and the $e$-sensitive shorting bound
\begin{equation*}
 \rho_g\ge\bar\rho_g:=
 \left[\frac1M+e+\frac{PQ}{P+Q}\right]^{-1},
\end{equation*}
where $P,Q$ are as in \eqref{eq:witness-vars}.  It is sufficient that
\begin{equation}\label{eq:Phi-enh}
 \Phi:=24\delta_gR_*M^2-M^3-3\delta_gM^2
       -6(M-\delta_g)(M-\bar\rho_g)\ge0.
\end{equation}
Indeed,
\[
\frac{\Phi}{24M^3}
=sR_*-U_s-
\frac{(M-\delta_g)(M-\bar\rho_g)}{4M^3},
\]
while
$C_g\le(M-\delta_g)(M-\bar\rho_g)/(4M^3)$.

For $e=0$, write $M=2n$ or $2n+1$, take $z_*=v_n$ and $R_*=R_n$,
orienting the thread so that $P\le Q$ in the odd case.  For $e=1$, choose
$i_*$ with $R_{i_*}=\max_{0\le i\le M}R_i$, and take
$z_*=v_{i_*}$ and $R_*=R_{i_*}$.  Because $S$ was wired before the auxiliary
arms were added, these choices satisfy $R_*\le D$.
The calculations below expand \eqref{eq:Phi-enh}; the only failures are
\eqref{eq:cubic-M2} and the initial wired bound for $M=3,p=q=3,e=0$.  In the latter
case choose instead the
endpoint-to-distance-two pair.  Then $R_*=H_s=1$, while
$\rho_g\ge3/4$ and $\varepsilon_g\le1/36$, so
\[
 C_g\le\frac89\cdot\frac94\cdot\frac1{36}
 =\frac1{18}=U_s.
\]
Thus \eqref{eq:ENH} holds there as well.
\end{proof}

\subsection{Exact expansions}

We verify \eqref{eq:Phi-enh}.  For $e=0$, use the notation of
Appendix~\ref{app:WD}, including its parity-dependent definition of $N$;
then $\bar\rho=M(P+Q)/V$.  The variables $x,y,z$ below are again
nonnegative integers.  Multiplying $\Phi$ by the positive factor $AV/M^2$ gives
\[
 \Psi=24d_0N-MAV-3d_0V-6PQ(MA-d_0).
\]
For $M=2n$, $P=2+x,Q=2+y,n=1+z$, expansion gives
\begin{align*}
\Psi_{\rm even}={}&-36+144z^2+93(x+y)+210z(x+y)
 +36(x^2+y^2)+132xy+216z^2(x+y)\\
&+90z(x^2+y^2)+302xyz+35xy(x+y)+72z^2(x^2+y^2)
 +260xyz^2\\
&+90xyz(x+y)+8x^2y^2+76xyz^2(x+y)
 +22x^2y^2z+20x^2y^2z^2.
\end{align*}
This is nonnegative except at $x=y=z=0$, namely the cubic $M=2$ case.
For $M=2n+1$ with the same substitutions,
\begin{align*}
\Psi_{\rm odd}={}&-72+144z+144z^2+132x+204y+426z(x+y)
 +51x^2+99y^2+258xy\\
&+216z^2(x+y)+162z(x^2+y^2)+562xyz+63x^2y+87xy^2
 +72z^2(x^2+y^2)\\
&+260xyz^2+166xyz(x+y)+18x^2y^2+76xyz^2(x+y)
 +42x^2y^2z+20x^2y^2z^2.
\end{align*}
With $x\le y$, the only negative case is $x=y=z=0$, namely the initial wired
choice for $M=3,p=q=3$; it is replaced in
Lemma~\ref{lem:enhanced} by an endpoint-to-thread pair.

For $e=1$, use $P=1+x,Q=1+y,M=2+z$ and the notation of
Appendix~\ref{app:WD}.  After multiplication by $AV/M^2$ and insertion of
the nearest-integer resistance lower bound, it is enough that
\[
\Psi_1=6(V^2+4V-\Delta^2)-MAV-3\Delta V-6\Delta(MA-\Delta)\ge0.
\]
Its expansion is
\begin{align*}
\Psi_1={}&36+45z+27z^2+45(x+y)+99z(x+y)+18(x^2+y^2)+38xy
 +45z^2(x+y)\\
&+45z(x^2+y^2)+142xyz+11xy(x+y)+18z^2(x^2+y^2)
 +60xyz^2\\
&+45xyz(x+y)+2x^2y^2+19xyz^2(x+y)
 +11x^2y^2z+5x^2y^2z^2,
\end{align*}
which is strictly positive.

\section{Proof of the rank-one chain interpolation lemma}\label{app:rank-one}

We prove Lemma~\ref{lem:rank-one} in its stated generality.  Put
\[
 z=\ee_v-\ee_u,\qquad
 \lambda=\frac1\tau-\frac1L=\frac\delta{L\tau},\qquad
 \delta=1-a-b=L-\tau,
 \qquad
 \rho=z^{\mathsf T}\cL_N^+z=R_N(u,v).
\]
Shortening the distinguished edge from resistance $L$ to $\tau$ adds a
parallel conductance $\lambda$, and hence
\begin{equation}\label{eq:rank-one-laplacian}
 \cL_{N^-}=\cL_N+\lambda zz^{\mathsf T}.
\end{equation}
Both Laplacians have kernel $\operatorname{span}\{\mathbf1\}$.  Restricting
\eqref{eq:rank-one-laplacian} to the zero-sum subspace, where $\cL_N$ is
invertible, the Sherman--Morrison formula gives
\begin{equation}\label{eq:rank-one-pseudoinverse}
 \cL_{N^-}^+
 =\cL_N^+-
 \frac{\lambda\,\cL_N^+zz^{\mathsf T}\cL_N^+}{1+\lambda\rho}
 \qquad\text{on }\R_0^{V(N)}.
\end{equation}
Consequently, for every $\zeta\in\R_0^{V(N)}$,
\begin{equation}\label{eq:rank-one-energy-update}
 Q_{N^-}(\zeta)
 =Q_N(\zeta)-
 \frac{\lambda\,\langle\zeta,z\rangle_N^2}{1+\lambda\rho}.
\end{equation}
On the other hand, completing the square gives
\[
 \min_{y\in\R}\bigl\{Q_N(\zeta-yz)+\lambda^{-1}y^2\bigr\}
 =Q_N(\zeta)-
 \frac{\lambda\,\langle\zeta,z\rangle_N^2}{1+\lambda\rho},
\]
the minimizer being
$y=\lambda\langle\zeta,z\rangle_N/(1+\lambda\rho)$.  Taking
$\zeta=\nu(t)-\eta$, observing that
$\zeta-yz=\nu(t-y)-\eta$, and renaming $t-y$ as $x$ yields the rank-one
variational identity
\begin{equation}\label{eq:rank-one-var}
Q_{N^-}(\nu(t)-\eta)
=\min_{x\in\R}\left\{Q_N(\nu(x)-\eta)+\lambda^{-1}(t-x)^2\right\}.
\end{equation}
Let $x_*$ be the minimizer, and set
\[
 F(x)=Q_N(\nu(x)-\eta)+Lx(1-x),\qquad
 r_0=m+a+\frac\delta2.
\]
The following identity is obtained by comparing coefficients:
\begin{equation}\label{eq:rank-square}
\lambda^{-1}(t-x)^2+\tau t(1-t)-Lx(1-x)
=\frac{(Lx-r_0)^2}{\delta}-\frac\delta4.
\end{equation}
Thus, with
$\mathcal T=Q_{N^-}(\nu(t)-\eta)+\tau t(1-t)$,
\begin{equation}\label{eq:T-rank}
 \mathcal T=F(x_*)+\frac{(Lx_*-r_0)^2}{\delta}-\frac\delta4.
\end{equation}

Put $\phi=\cL_N^+z$, and define
\[
 c=(\ee_u-\eta)^{\mathsf T}\phi.
\]
Here $\phi_v-\phi_u=\rho$.  By the maximum principle, every value of $\phi$
lies in $[\phi_u,\phi_v]$; since $\eta$ is a probability vector,
$\eta^{\mathsf T}\phi$ is a convex combination of these values.  Hence
$-\rho\le c=\phi_u-\eta^{\mathsf T}\phi\le0$.  Write $c=-\rho\xi$, $0\le\xi\le1$, and put
\[
 \sigma=\frac{L-\rho}{L}\in[0,1],\qquad
 \Delta_0=L-\delta\sigma,\qquad r=Lx_*.
\]
The stationarity equation in \eqref{eq:rank-one-var} becomes
\begin{equation}\label{eq:r-star}
r=\frac{L\bigl(m+a+\delta(1-\sigma)\xi\bigr)}{\Delta_0}.
\end{equation}
In particular $m\le r\le m+1$, since
\begin{align*}
r-m&=\frac{La+m\delta\sigma+L\delta(1-\sigma)\xi}{\Delta_0}\ge0,\\
m+1-r&=\frac{Lb+\delta\sigma(L-m-1)
+L\delta(1-\sigma)(1-\xi)}{\Delta_0}\ge0.
\end{align*}

Since $F''=-2L\sigma$, the stationarity equation and
\eqref{eq:T-rank} yield, for integers $i$,
\begin{equation}\label{eq:grid-diff}
F(i/L)-\mathcal T
=\frac\delta4-\frac{(i-r_0)^2}{\delta}
+\frac{\Delta_0}{\delta L}(i-r)^2.
\end{equation}
At $i=m,m+1$, respectively,
\begin{align}
F(m/L)-\mathcal T
&=\frac1\delta\left[\frac{\Delta_0}{L}(r-m)^2-a(1-b)\right],
\label{eq:left-grid}\\
F((m+1)/L)-\mathcal T
&=\frac1\delta\left[\frac{\Delta_0}{L}(m+1-r)^2-b(1-a)\right].
\label{eq:right-grid}
\end{align}
Define
\[
A=\sqrt{\frac{La(1-b)}{\Delta_0}},\qquad
B=\sqrt{\frac{Lb(1-a)}{\Delta_0}}.
\]
Since $(r-m)+(m+1-r)=1$, one of
\eqref{eq:left-grid}--\eqref{eq:right-grid} is nonnegative whenever
$A+B\le1$.

For $L\ge3$, put
\[
S=\left(\sqrt{a(1-b)}+\sqrt{b(1-a)}\right)^2,\quad
X=\sqrt{(1-a)(1-b)},\quad Y=\sqrt{ab}.
\]
Then $1-S=(X-Y)^2$ and $X^2-Y^2=\delta$.  Since $a,b\le1/3$, one has
$X\ge2Y$, and consequently
\[
1-S=\frac{\delta^2}{(X+Y)^2}\ge\frac\delta3.
\]
Thus $LS\le L-\delta\le\Delta_0$, which is exactly $(A+B)^2\le1$.

It remains to consider $L=2$; reversing the edge reduces us to $m=0$.
If $A+B\le1$ we are done.  Otherwise
\begin{equation}\label{eq:sigma0}
 \sigma>\sigma_0:=\frac{2(1-S)}\delta.
\end{equation}
Since $\sigma\le1$, this also gives
$0\le\sigma_0<1$.
From \eqref{eq:r-star},
\[
r\le r_+(\sigma):=
\frac{2(a+\delta(1-\sigma))}{2-\delta\sigma}.
\]
At $\sigma=\sigma_0$, where $A+B=1$, the inequality
$r_+(\sigma_0)\le A=1-B$ is equivalent to
\begin{equation}\label{eq:L2-boundary-rank}
a+b-3ab+3\sqrt{ab(1-a)(1-b)}\le1.
\end{equation}
This follows from
\[
(1-a-b+3ab)^2-9ab(1-a)(1-b)
=\delta(\delta-3ab)\ge0,
\]
because $a,b\le1/3$.  Finally,
\begin{align*}
&(2-\delta\sigma)^2
\left[(1-r_+(\sigma))^2-B^2\right]\\
&\qquad=\delta\left[\delta\sigma^2+2b(3-a)\sigma-4b\right].
\end{align*}
The bracket is increasing in $\sigma\ge0$ and is nonnegative at
$\sigma_0$ by \eqref{eq:L2-boundary-rank}.  In addition,
\[
 1-r_+(\sigma)=\frac{2b+\delta\sigma}{2-\delta\sigma}>0.
\]
Hence the preceding squared inequality has the positive-sign branch, and
$1-r\ge1-r_+(\sigma)\ge B$, so \eqref{eq:right-grid} is nonnegative.
This proves that at least one adjacent grid value is at least $\mathcal T$,
and completes the proof of Lemma~\ref{lem:rank-one}.

\clearpage
\section{Proof of the cubic exceptional budget}\label{app:exceptional}

\begin{proof}[Proof of Lemma~\ref{lem:cubic-exception}]
Assume \eqref{eq:cubic-M2}.  Write $g=u-w-v$.  Then
$s=1/6$, $H_s=3/4$, $\rho_g\ge2/3$, and
\begin{equation}\label{eq:Cg-cubic}
   C_g=\frac{2-\rho_g}{20}\le\frac1{15}.
\end{equation}
Let the selected length-one thread $k=xy$ have endpoint degrees $r,t$, put
$S_0=r+t$, $\alpha_k=rt/S_0^2$, and write $\rho_k=R_B(x,y)$.

\textbf{Subcase A: }$\{x,y\}\cap\{u,v\}=\varnothing$.  Put
$\mathcal S=V(\widetilde B)\setminus\{u,w,v\}$ and identify all vertices of
$\mathcal S$ to a single node $s_*$. Wiring only lowers resistances;
after the auxiliary arms for deleted loop
incidences are added, each of $u$ and $v$ is joined to $s_*$ by two parallel
unit arms, while $w$ is joined to $u$ and $v$.  By symmetry the resistance
from $w$ to $s_*$ in this augmented wired network is
\[
\frac{(\tfrac32)(\tfrac32)}{\tfrac32+\tfrac32}=\frac34.
\]
Deleting the auxiliary arms and undoing the wiring cannot decrease resistance,
so Lemma~\ref{lem:set-contraction} gives
\begin{equation*}
R_{\widetilde B}(w,\mathcal S)
=R_{\widetilde B/\mathcal S}(w,s_*)\ge\frac34.
\end{equation*}
Apply \eqref{eq:variance-bound} in Lemma~\ref{lem:set-contraction}
to $N=\widetilde B$, $z=w$, and the probability vector
$\nu=\frac{r}{S_0}\ee_x+\frac{t}{S_0}\ee_y$ supported on
$\{x,y\}\subseteq\mathcal S$.
Its variance term is
$\frac12\sum_{i,j}\nu_i\nu_jR_{\widetilde B}(i,j)=\frac{rt}{S_0^2}\rho_k=\alpha_k\rho_k$
(since $R(x,x)=R(y,y)=0$).  Hence
\begin{equation}\label{eq:subcase-A-diameter}
 D\ge\max\{R(w,x),R(w,y)\}
 \ge R_{\widetilde B}(w,\mathcal S)+\alpha_k\rho_k
 \ge\frac34+\alpha_k\rho_k.
\end{equation}
By \eqref{eq:adj-bound},
$\alpha_k\rho_k\ge(S_0-2)/S_0^2$. Combining
$C_k=1/S_0-\tfrac56\alpha_k\rho_k$, \eqref{eq:Cg-cubic}, and
\eqref{eq:subcase-A-diameter} gives the strict margin
\[
sD-C_g-C_k
\ge\frac7{120}-\frac1{S_0}+\alpha_k\rho_k
\ge\frac7{120}-\frac2{S_0^2}>0,
\]
since $S_0\ge6$.

\textbf{Subcase B: }The threads share an endpoint. By symmetry write
$k=ux$. If $d_G(x)=d\ge4$, \eqref{eq:adj-bound} gives
\[
 C_k\le
 \frac{d^2+11d-6}{2(d+3)^2(3d-1)}\le\frac7{120}.
\]
The last inequality follows after multiplication by the positive
denominator from
$f(d):=21d^3+59d^2-513d+297\ge0$ for $d\ge4$: indeed
$f(4)=533$ and $f'(d)=63d^2+118d-513>0$ on that range.  Thus
$C_g+C_k\le1/8=sH_s\le sD$.

Finally let $d=3$.  Since $D\ge H_s=3/4$, it suffices to prove
\begin{equation}\label{eq:cubic-correlation}
   6\rho_g+25\rho_k\ge17.
\end{equation}
Indeed, for $d=3$ one has $S_0=6$ and $\alpha_k=1/4$, so
\[
 C_g+C_k=\frac{2-\rho_g}{20}+\frac16-\frac5{24}\rho_k
 =\frac{32-6\rho_g-25\rho_k}{120},
\]
and, as $17\ge32-20D$, the displayed bound gives $C_g+C_k\le D/6=sD$.
Identify all vertices of $\widetilde B$ outside $\{u,v,w,x\}$ with one
node $o$, retaining the unit arms produced by this contraction. Add an arm
only for an incidence belonging to a deleted loop thread. Each exterior
incidence is therefore counted exactly once: the final multiplicity at
each of $u,v,x$ is its full degree minus its incidences inside the retained
subgraph. The resulting comparison networks are shown in
Fig.~\ref{fig:cubic-reduced}.  A direct series--parallel (equivalently,
Schur-complement) calculation, given in the matrix calculation below,
gives
\[
\begin{array}{c@{\qquad}c@{\qquad}c}
\toprule
&\bar\rho_g&\bar\rho_k\\
\midrule
vx\notin E(G)&22/31&17/31\\
vx\in E(G)&2/3&13/24\\
\bottomrule
\end{array}
\]
and Rayleigh monotonicity gives $\rho_g\ge\bar\rho_g$ and
$\rho_k\ge\bar\rho_k$.  The two resulting left sides of
\eqref{eq:cubic-correlation} are $557/31$ and $421/24$, both larger than $17$. The case in which $k$ shares $v$ is symmetric, and sharing both ends would contradict $e=0$ and simplicity.

\begin{figure}[ht]
\centering
\begin{tikzpicture}[scale=0.9]
  \node[circle,fill=black,inner sep=1.5pt] (w) at (0,1.5) {};
  \node[circle,fill=black,inner sep=1.5pt] (u) at (-2,0) {};
  \node[circle,fill=black,inner sep=1.5pt] (v) at (2,0) {};
  \node[circle,fill=black,inner sep=1.5pt] (x) at (0,0) {};
  \node[circle,fill=black,inner sep=1.5pt] (o) at (0,-1.5) {};
  \node[above] at (w) {$w$};
  \node[left] at (u) {$u$};
  \node[right] at (v) {$v$};
  \node[below left] at (x) {$x$};
  \node[below] at (o) {$o$};
  \draw (u)--(w) node[midway,left]{$1$};
  \draw (w)--(v) node[midway,right]{$1$};
  \draw (u)--(x) node[midway,below]{$1$};
  \draw (u)--(o) node[midway,left]{$1$};
  \draw (v)--(o) node[midway,right]{$2$};
  \draw (x)--(o) node[midway,right]{$2$};
  \node[below] at (0,-2.1) {(a) $vx\notin E(G)$};
\end{tikzpicture}
\qquad
\begin{tikzpicture}[scale=0.9]
  \node[circle,fill=black,inner sep=1.5pt] (w) at (0,1.5) {};
  \node[circle,fill=black,inner sep=1.5pt] (u) at (-2,0) {};
  \node[circle,fill=black,inner sep=1.5pt] (v) at (2,0) {};
  \node[circle,fill=black,inner sep=1.5pt] (x) at (0,0) {};
  \node[circle,fill=black,inner sep=1.5pt] (o) at (0,-1.5) {};
  \node[above] at (w) {$w$};
  \node[left] at (u) {$u$};
  \node[right] at (v) {$v$};
  \node[below left] at (x) {$x$};
  \node[below] at (o) {$o$};
  \draw (u)--(w) node[midway,left]{$1$};
  \draw (w)--(v) node[midway,right]{$1$};
  \draw (u)--(x) node[midway,below]{$1$};
  \draw (v)--(x) node[midway,above]{$1$};
  \draw (u)--(o) node[midway,left]{$1$};
  \draw (v)--(o) node[midway,right]{$1$};
  \draw (x)--(o) node[midway,right]{$1$};
  \node[below] at (0,-2.1) {(b) $vx\in E(G)$};
\end{tikzpicture}
\caption{The two reduced networks used in the cubic exceptional case.  All
depicted edges are unit edges, and the labels on the arms to $o$ record the
number of parallel unit arms at $u,v,x$, including the auxiliary arms for
deleted loop incidences. Each exterior incidence is counted once.}
\label{fig:cubic-reduced}
\end{figure}

For $d\ge4$, $f(d)>0$ makes the local estimate strict. For $d=3$,
both displayed weighted sums exceed $17$. This proves strictness in
Subcase B as well.
\end{proof}

\subsection*{Grounded matrices}

For completeness, ground $o$ in the networks of
Fig.~\ref{fig:cubic-reduced} and order the remaining vertices as
$(u,v,w,x)$. If $vx\notin E(G)$, the reduced Laplacian is
\[
L_a=\begin{pmatrix}
3&0&-1&-1\\
0&3&-1&0\\
-1&-1&2&0\\
-1&0&0&3
\end{pmatrix}.
\]
In the second case, $vx\in E(G)$, each of $u,v,x$ has a single unit arm to
$o$, and the additional unit edge $vx$ is present; the reduced Laplacian is
\[
L_b=\begin{pmatrix}
3&0&-1&-1\\
0&3&-1&-1\\
-1&-1&2&0\\
-1&-1&0&3
\end{pmatrix}.
\]
Since $R(a,b)=(\ee_a-\ee_b)^{\mathsf T}L^{-1}(\ee_a-\ee_b)$ after grounding,
inverting these matrices gives
\[
\bar\rho_g=R(u,v)=\frac{22}{31},\qquad
\bar\rho_k=R(u,x)=\frac{17}{31}
\qquad(vx\notin E(G)),
\]
and
\[
\bar\rho_g=R(u,v)=\frac23,\qquad
\bar\rho_k=R(u,x)=\frac{13}{24}
\qquad(vx\in E(G)).
\]
These are the values used in the table above.

\end{document}